\newsavebox{\@brx}
\newcommand{\llangle}[1][]{\savebox{\@brx}{\(\m@th{#1\langle}\)}%
  \mathopen{\copy\@brx\mkern2mu\kern-0.9\wd\@brx\usebox{\@brx}}}
\newcommand{\rrangle}[1][]{\savebox{\@brx}{\(\m@th{#1\rangle}\)}%
  \mathclose{\copy\@brx\mkern2mu\kern-0.9\wd\@brx\usebox{\@brx}}}
\numberwithin{equation}{section}
\newtheorem{theorem}{Theorem}[section]
\newtheorem{lemma}[theorem]{Lemma}
\newtheorem{corollary}[theorem]{Corollary}
\newtheorem{proposition}[theorem]{Proposition}
\theoremstyle{remark}
\newtheorem{remark}[theorem]{\textbf{Remark}}
\def\Xint#1{\mathchoice
{\XXint\displaystyle\textstyle{#1}}%
{\XXint\textstyle\scriptstyle{#1}}%
{\XXint\scriptstyle\scriptscriptstyle{#1}}%
{\XXint\scriptscriptstyle\scriptscriptstyle{#1}}%
\!\int}
\def\XXint#1#2#3{{\setbox0=\hbox{$#1{#2#3}{\int}$}
\vcenter{\hbox{$#2#3$}}\kern-.5\wd0}}
\def\dashint{\Xint-}
\def\avgint{\Xint-}
\newcommand{\lla}{\llangle}
\newcommand{\rra}{\rrangle}
\newcommand{\loc}{\ensuremath{\text{loc}}}
\newcommand{\C}{\ensuremath{\mathbb{C}}}
\newcommand{\Cn}{\ensuremath{\mathbb{C}^n}}
\newcommand{\R}{\ensuremath{\mathbb{R}}}
\newcommand{\Z}{\ensuremath{\mathbb{Z}}}
\newcommand{\Rd}{\ensuremath{\mathbb{R}^d}}
\newcommand{\D}{\ensuremath{\mathscr{D}}}
\newcommand{\MC}[1]{\ensuremath{\mathcal{#1}}}
\newcommand{\supp}{\text{supp }}
\newcommand{\ip}[2]{\ensuremath{\left\langle#1,#2\right\rangle}}
\newcommand{\W}[1]{\ensuremath{\widetilde{#1}}}
\newcommand{\A}{\ensuremath{{\mathcal A}}}
\newcommand{\B}{\ensuremath{{\mathcal B}}}
\newcommand{\al}{\alpha}
\newcommand{\ra}{\rightarrow}
\newcommand{\Q}{\ensuremath{\mathcal{Q}}}
\newcommand{\OP}[1]{\ensuremath{\left|#1\right|_{\text{op}}}}
\DeclareMathOperator{\dv}{div}
\newcommand{\op}{\mathrm{op}}
\newcommand{\sca}{{\mathrm{sc}}}
\DeclareMathOperator*{\esssup}{ess\,sup}
\newcommand{\Uu}{\mathcal{U}}
\newcommand{\Vv}{\mathcal{V}}
\newcommand{\Dd}{\mathcal{D}}
\newcommand{\Ss}{\mathcal{S}}
\title{Two weight bump conditions for matrix weights}
\author{David Cruz-Uribe, OFS}
\address{Department of Mathematics\\
University of Alabama, Box 870350, Tuscaloosa, AL 35487.}\email{dcruzuribe@ua.edu}
\author{Joshua Isralowitz}
\address{Department of Mathematics and Statistics\\
SUNY Albany, 1400 Washington Ave., Albany, NY 12222.}\email{jisralowitz@albany.edu}
\author{Kabe Moen}
\address{Department of Mathematics\\
University of Alabama, Box 870350, Tuscaloosa, AL 35487.}\email{kabe.moen@ua.edu}
\begin{document}

\subjclass[2010]{Primary 42B20, 42B25, 42B35}

\keywords{Matrix weights, $A_p$ bump conditions, maximal operators,
  fractional integral operators, singular integral operators, sparse operators,
  Poincar\'e inequalities, $p$-Laplacian}

\thanks{ The first
  author is supported by NSF Grant DMS-1362425 and research funds from the
  Dean of the College of Arts \& Sciences, the University of Alabama.
The second and third authors are supported by the Simons Foundation.}

\begin{abstract}
In this paper we extend the theory of two weight, $A_p$ bump conditions to the
setting of matrix weights.  We prove two matrix weight inequalities
for fractional maximal operators, fractional and singular integrals,
sparse operators and averaging operators.   As applications we prove
quantitative, one weight estimates, in terms of the matrix $A_p$ constant, for
singular integrals, and prove a Poincar\'e inequality related to those
that appear in the study of degenerate elliptic PDEs.
\end{abstract}

\maketitle

\section{Introduction}

In this paper we extend the theory of $A_p$ bump conditions to matrix
weights.  To put our results into context we first briefly review the
theory in the case of scalar weights.
A scalar weight $w$ (i.e., a non-negative, locally integrable
function) satisfies the Muckenhoupt $A_p$ condition, $1<p<\infty$, if
\[ [w]_{A_p} = \sup_Q \avgint_Q w\,dx \left(\avgint_Q
    w^{1-p'}\,dx\right)^{p-1} < \infty, \]
where here and below the supremum is taken over all cubes $Q$ with
edges parallel to the coordinate axes.    It is well known that this
condition is sufficient for a wide variety of classical operators
(e.g., the Hardy-Littlewood maximal operator, singular integral
operators) to be bounded on $L^p(w)$.    (Cf.~\cite{duoandikoetxea01,grafakos08b}.)

This condition naturally extends to pairs of weights:  we say
$(u,v)\in A_p$ if
\[ [u,v]_{A_p} = \sup_Q \avgint_Q u\,dx \left(\avgint_Q
    v^{1-p'}\,dx\right)^{p-1} < \infty. \]
However, unlike in the one weight case, while this condition is often
necessary for an operator to map $L^p(v)$ into $L^p(u)$, it is almost never
sufficient.  (See~\cite{MR2797562} and the references it contains.)   Therefore, for many years, the problem was to find a
similar condition that was sufficient.   The idea of $A_p$ bump
conditions originated with Neugebauer~\cite{neugebauer83} but  was
fully developed by  P\'erez~\cite{P2,perez94b,perez94}.  (See also
Sawyer and Wheeden~\cite{SW}.)  If we rewrite
the two weight $A_p$ condition as
\[ \sup_Q |Q|^{-1}\|u^{\frac{1}{p}}\|_{p,Q}\|v^{-\frac{1}{p}}\|_{p',Q} < \infty, \]
where $\|\cdot \|_{p,Q}$ denotes the localized $L^p$ norm with respect
to measure $|Q|^{-1}\chi_Q\,dx$, then a ``bumped'' $A_p$ condition is
gotten by replacing the $L^p$ and/or $L^{p'}$ norms with a slightly
larger norm in the scale of Orlicz spaces.

We recall a few properties of Orlicz spaces; for more details
see~\cite{MR2797562}.  Let $\Phi : [0,\infty)\rightarrow [0,\infty)$
be a Young function: convex, increasing, $\Phi(0)=0$, and
$\Phi(t)/t\ra \infty$ as $t\ra \infty$.  Given $\Phi$, its
associate function is another Young function defined by
\[ \bar{\Phi}(t) = \sup_{s>0} \big\{ st - \Phi(s)\big\}. \]
If $\Phi(t)=t^p$, $\bar{\Phi}(t)\approx t^{p'}$.
Given $1<p<\infty$, we say that $\Phi$ satisfies the $B_p$ condition,
denoted by $\Phi\in B_p$, if
\[ \int_1^\infty \frac{\Phi(t)}{t^p}\,\frac{dt}{t} <  \infty.  \]

Given a cube $Q$ we define
the localized Orlicz norm $\|f\|_{\Phi,Q}$  by
$$\|f\|_{\Phi,Q}=\inf\bigg\{ \lambda>0: \dashint_\Omega\Phi\bigg(\frac{|f(x)|}{\lambda}\bigg)\,dx\leq 1\bigg\}<\infty.$$
The pair $\Phi,\,\bar{\Phi}$ satisfy the generalized H\"older
inequality in the scale of Orlicz spaces:
\begin{equation}  \label{eqn:gen-holder}
\avgint_Q |f(x)g(x)|\,dx \leq 2\|f\|_{\Phi,Q}\|g\|_{\Psi,Q}.
\end{equation}

P\'erez proved that if the term on the right in the two weight $A_p$
condition is ``bumped'' in the scale of Orlicz spaces, then the
maximal operator satisfies a two weight inequality.  Recall that the
Hardy-Littlewood maximal operator is defined by
\[ Mf(x) = \sup_Q \avgint_Q |f(y)|\,dy \cdot \chi_Q(x). \]

\begin{theorem} \label{thm:perez-max}
Given $1<p<\infty$, suppose $\Phi$ is a Young function such that
$\bar{\Phi}\in B_{p}$.  If $(u,v)$ is a pair of weights such that
\[ \sup_Q \|u^{\frac{1}{p}}\|_{p,Q}\|v^{-\frac{1}{p}}\|_{\Phi,Q} <
  \infty, \]
then $M :
L^p(v)\rightarrow L^p(u)$.
\end{theorem}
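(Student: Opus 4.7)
The plan is to follow P\'erez's classical argument: reduce $M$ to a dyadic maximal operator, pass to the dual weight, decompose via Calder\'on--Zygmund, and then marry the generalized H\"older inequality \eqref{eqn:gen-holder} to the $B_p$ hypothesis through the sparseness of the resulting family. First I would replace $M$ by a dyadic maximal operator $M^{\D}$ relative to a fixed grid (the bump hypothesis is inherited by any dyadic subfamily, and the continuous operator is controlled by finitely many shifted dyadic versions). Next I would introduce the dual weight $\sigma = v^{1-p'}$; the substitution $f\mapsto f\sigma^{-1}$ and the identity $(1-p')(1-p)=1$ reduce the target inequality $M:L^p(v)\to L^p(u)$ to proving
\[
\|M^{\D}(f\sigma)\|_{L^p(u)} \le C\, \|f\|_{L^p(\sigma)}
\]
for all nonnegative $f$, where $\|f\|_{L^p(\sigma)}^p = \int |f|^p \sigma\,dx$.

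For the core estimate I would form the Calder\'on--Zygmund cubes of $f\sigma$ at the levels $2^k$ to produce a sparse family $\mathcal{F}$ of dyadic cubes with pairwise disjoint major subsets $E_Q \subset Q$ satisfying $|E_Q| \ge \tfrac12 |Q|$, together with the standard consequence
\[
\int_{\Rd} \bigl( M^{\D}(f\sigma)\bigr)^p u\,dx \ \le\ C\sum_{Q \in \mathcal{F}} \Bigl(\avgint_Q f\sigma\,dy\Bigr)^{p} u(E_Q).
\]
On each $Q$, applying \eqref{eqn:gen-holder} to the splitting $f\sigma = (f\sigma^{1/p})\cdot \sigma^{1/p'}$ gives
\[
\avgint_Q f\sigma\,dy \ \le\ 2\,\|f\sigma^{1/p}\|_{\bar\Phi,Q}\,\|\sigma^{1/p'}\|_{\Phi,Q}.
\]
Raising to the $p$-th power and using the bump hypothesis together with $\|u^{1/p}\|_{p,Q}^p = u(Q)/|Q|$ and $u(E_Q)\le u(Q)$ collapses the weighted factor:
\[
\|\sigma^{1/p'}\|_{\Phi,Q}^p\, u(E_Q)\ \le\ K^p\,\frac{|Q|}{u(Q)}\,u(E_Q)\ \le\ K^p |Q|\ \le\ 2K^p |E_Q|.
\]

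Summing over $Q \in \mathcal{F}$, the disjointness of the $E_Q$ and the pointwise bound $\|f\sigma^{1/p}\|_{\bar\Phi,Q} \le M_{\bar\Phi}(f\sigma^{1/p})(x)$ for $x \in Q$ yield
\[
\sum_{Q \in \mathcal{F}} \|f\sigma^{1/p}\|_{\bar\Phi,Q}^p\, |E_Q|\ \le\ \int_{\Rd} \bigl(M_{\bar\Phi}(f\sigma^{1/p})\bigr)^p dx,
\]
where $M_{\bar\Phi}g(x) = \sup_{Q \ni x} \|g\|_{\bar\Phi,Q}$ is the Orlicz maximal operator. Since $\bar\Phi \in B_p$, a classical theorem of P\'erez gives that $M_{\bar\Phi}$ is bounded on $L^p(\Rd)$, so the right side is $\le C\|f\sigma^{1/p}\|_p^p = C\|f\|_{L^p(\sigma)}^p$, completing the estimate. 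The main obstacle is arranging the Calder\'on--Zygmund decomposition so that $u(E_Q)$ (rather than $u(Q)$) appears on the right of the sparse bound; this is precisely what converts a single scalar bump inequality into an $|E_Q|$-weighted sum that can be absorbed by the $L^p$-boundedness of $M_{\bar\Phi}$ delivered by the $B_p$ hypothesis.
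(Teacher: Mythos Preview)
Your argument is correct: it is exactly P\'erez's classical scalar proof, and every step goes through as written. One small clarification: in your final paragraph you call ``arranging the CZ decomposition so that $u(E_Q)$ appears'' the main obstacle, but in fact the layer-cake estimate over $\Omega_k\setminus\Omega_{k+1}$ naturally produces $u(E_Q)$, and you then immediately discard this in favor of $u(E_Q)\le u(Q)$. The genuinely important appearance of $E_Q$ is after the bump has absorbed the weights and you are left with Lebesgue measure, where $|Q|\le 2|E_Q|$ together with the disjointness of the $E_Q$ is what allows the sum to be controlled by $\|M_{\bar\Phi}(f\sigma^{1/p})\|_{L^p}^p$.

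As for comparison with the paper: Theorem~\ref{thm:perez-max} is quoted as a known result and not proved directly; the paper instead proves the matrix generalization Theorem~\ref{thm:mainmax} and notes that the scalar case $n=1$, $\alpha=0$, $p=q$ recovers Theorem~\ref{thm:perez-max}. That proof is organized differently from yours. Rather than passing to the dual weight $\sigma=v^{1-p'}$ and splitting $f\sigma=(f\sigma^{1/p})\sigma^{1/p'}$, the paper inserts the reducing operator $\mathcal V_Q^\Phi$ to factor the weighted average into a ``function part'' (the auxiliary maximal operator $M^{\D}_{\beta,V}$, dominated by $M_{\bar\Phi}$) and a ``weight part'' $N_Q(x)=\sup_{R\subset Q,\,R\ni x}|U(x)^{1/q}\mathcal V_R^\Phi|_\op$; the stopping-time is run on the function part, and a separate Carleson-type lemma (Lemmas~\ref{lemma:NQq}/\ref{NQOrliczLem}) is needed to show $\avgint_Q N_Q^q\,dx\lesssim 1$. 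Your route is shorter and more elementary in the scalar setting, but it relies on the commutative substitution $f\mapsto f\sigma$ and the scalar factorization of $f\sigma$, neither of which has a direct analog for matrix weights; the paper's reducing-operator decomposition is what makes the argument survive in the noncommutative case.
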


\begin{remark}
For instance, if we take $\Phi(t)=t^{p'}\log(e+t)^{p'-1+\delta}$,
$\delta>0$, then $\bar{\Phi}(t)\approx t^p\log(e+t)^{-1-\epsilon}$,
$\epsilon>0$, and $\bar{\Phi}\in B_p$.   Orlicz functions of this kind are referred to as ``log
bumps.''
\end{remark}

\medskip

It was conjectured (see~\cite{cruz-uribe-perez02}) that a comparable
result held for Calder\'on-Zygmund singular integral operators if both
terms in the two weight $A_p$ condition were bumped.  After a number
of partial results, this was proved by Lerner~\cite{Lern2012}.  Recall
that a Calder\'on-Zygmund
singular integral is an operator $T : L^2 \rightarrow L^2$ such
that if $f\in C_c^\infty(\Rd)$,  then for $x\not\in \supp(f)$,
\[ Tf(x) = \int_{\R^d} K(x,y)f(y)\,dy, \]
where the kernel $K : \R^{d}\times \R^d \setminus \Delta \rightarrow
\C$ ($\Delta = \{ (x,x) : x \in \R^d \}$), satisfies
\[ |K(x,y)| \leq \frac{C}{|x-y|^d}, \]
and
\[ |K(x,y)-K(x,y+h)| + |K(x,y)-K(x+h,y)| \leq C
  \frac{|h|^\delta}{|x-y|^{d+\delta}}, \]
for some $\delta>0$ and $|x-y|>2|h|$.

\begin{theorem} \label{thm:lerner-sio}
Given $1<p<\infty$, suppose $\Phi$ and $\Psi$ are Young functions such
that $\bar{\Phi}\in B_{p}$ and $\bar{\Psi}\in B_{p'}$.  If $(u,v)$ is
a pair of weights such that
\[ \sup_Q \|u^{\frac{1}{p}}\|_{\Psi,Q}\|v^{-\frac{1}{p}}\|_{\Phi,Q} <
  \infty, \]
and if $T$ is a Calder\'on-Zygmund singular integral, then $T:
L^p(v)\rightarrow L^p(u)$.
\end{theorem}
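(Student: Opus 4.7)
My plan is to prove Theorem \ref{thm:lerner-sio} via sparse domination. The starting point is a bilinear sparse bound for $T$: for $f$ and a sufficiently nice test function $g$, there exist finitely many sparse collections $\mathcal{S}_j$ of dyadic cubes such that
\[ |\langle Tf, g \rangle| \leq C_T \sum_j \sum_{Q \in \mathcal{S}_j} \langle |f| \rangle_Q \langle |g| \rangle_Q |Q|, \]
where $\langle h \rangle_Q = \avgint_Q |h|\,dx$. By the standard duality $\|Tf\|_{L^p(u)} = \sup\{\int Tf \cdot h u^{1/p}\,dx : \|h\|_{L^{p'}} \leq 1\}$, setting $g = h u^{1/p}$ reduces matters to showing, for each sparse family $\mathcal{S}$,
\[ \sum_{Q \in \mathcal{S}} \langle |f| \rangle_Q \langle |g| \rangle_Q |Q| \leq C \|f\|_{L^p(v)} \|g u^{-1/p}\|_{L^{p'}}. \]

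For each $Q$ in the sparse family I would split $|f| = (|f| v^{1/p}) \cdot v^{-1/p}$ and $|g| = (|g| u^{-1/p}) \cdot u^{1/p}$, then apply the generalized H\"older inequality \eqref{eqn:gen-holder} once with the Young pair $(\bar{\Phi}, \Phi)$ and once with $(\bar{\Psi}, \Psi)$ to obtain
\[ \langle |f| \rangle_Q \langle |g| \rangle_Q \leq 4 \|f v^{1/p}\|_{\bar{\Phi}, Q} \|v^{-1/p}\|_{\Phi, Q} \|g u^{-1/p}\|_{\bar{\Psi}, Q} \|u^{1/p}\|_{\Psi, Q}. \]
The two--weight bump hypothesis supplies $\|u^{1/p}\|_{\Psi, Q} \|v^{-1/p}\|_{\Phi, Q} \leq K$, eliminating both weight factors simultaneously.

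I would then exploit sparseness: each $Q \in \mathcal{S}$ admits a pairwise disjoint major subset $E_Q \subset Q$ with $|E_Q| \gtrsim |Q|$. Replacing $|Q|$ by a fixed multiple of $|E_Q|$ and pulling the sum inside an integral gives
\[ \sum_{Q \in \mathcal{S}} \|f v^{1/p}\|_{\bar{\Phi}, Q} \|g u^{-1/p}\|_{\bar{\Psi}, Q} |Q| \lesssim \int_{\Rd} M_{\bar{\Phi}}(f v^{1/p})(x)\, M_{\bar{\Psi}}(g u^{-1/p})(x)\, dx, \]
where $M_{\Phi} h(x) = \sup_{Q \ni x} \|h\|_{\Phi, Q}$ is the Orlicz maximal operator. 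By H\"older's inequality on $\Rd$ together with P\'erez's characterization -- namely that $\Phi \in B_p$ implies $M_{\Phi}$ is bounded on $L^p(\Rd)$ -- the right-hand side is dominated by $\|f v^{1/p}\|_{L^p} \|g u^{-1/p}\|_{L^{p'}} = \|f\|_{L^p(v)} \|g u^{-1/p}\|_{L^{p'}}$, since $\bar{\Phi} \in B_p$ and $\bar{\Psi} \in B_{p'}$ by hypothesis.

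The main obstacle is the first step, the sparse domination itself: at the time of Lerner's original proof this required his local median oscillation decomposition applied to $Tf$, and is the only place where the specific Calder\'on--Zygmund structure of $T$ -- size and smoothness of the kernel -- enters the argument. Once sparse domination is in hand, the remainder is a robust and symmetric extension of P\'erez's proof of Theorem \ref{thm:perez-max}; the essential new ingredient over the maximal-function case is the simultaneous use of two Orlicz bumps on the two sides of the sparse bilinear form, which is precisely why the two conditions $\bar{\Phi} \in B_p$ and $\bar{\Psi} \in B_{p'}$ appear in tandem.
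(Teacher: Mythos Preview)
Your proposal is correct and is precisely the strategy the paper itself employs. Note that Theorem~\ref{thm:lerner-sio} is stated in the introduction as a known scalar result attributed to Lerner and is not given a separate proof; however, the paper's proof of its matrix generalization, Theorem~\ref{thm:CZO} (via Theorem~\ref{thm:sparse}), specializes in the $n=1$ case exactly to the argument you outline: sparse (convex body) domination of $T$, duality, the generalized H\"older inequality applied with the pairs $(\bar\Phi,\Phi)$ and $(\bar\Psi,\Psi)$ to isolate the bump condition, the sparseness bound $|Q|\lesssim|E_Q|$ to pass to an integral of $M_{\bar\Phi}(fv^{1/p})\,M_{\bar\Psi}(gu^{-1/p})$, and finally H\"older together with the $B_p$/$B_{p'}$ bounds for the Orlicz maximal operators.
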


Analogous results hold for the fractional maximal operator $M_\alpha$,
and the fractional integral operator $I_\alpha$, $0<\alpha<d$, defined
by
\[ M_\alpha f(x) = \sup_Q |Q|^{\frac{\alpha}{d}}\avgint_Q |f(y)|\,dy
  \cdot \chi_Q(x), \]
and
\[ I_\alpha f(x) = \int_{\R^d} \frac{f(y)}{|x-y|^{d-\alpha}}\,dy. \]
For these operators we are interested in off-diagonal inequalities,
when $1<p\leq q < \infty$.  The corresponding two weight condition is
\[ [u,v]_{A_{p,q}^\alpha} =
\sup_Q |Q|^{\frac{\alpha}{d}-\frac{1}{q}+\frac{1}{p}}
\|u^{\frac{1}{q}}\|_{q,Q}\|v^{-\frac{1}{p}}\|_{p',Q} < \infty. \]
(In the one weight case, which requires
$\frac{1}{p}-\frac{1}{q}=\frac{\alpha}{d}$, the weight $w$ satisfies
$u=w^q$, $v=w^p$.  See~\cite{MR2797562} for details.)  Again, this
condition is itself not sufficient, but if the norms are bumped a
sufficient condition is gotten.  For the off-diagonal inequalities
(i.e., when $p<q$) we replace the $B_p$ condition by the weaker
$B_{p,q}$ condition:  we say a Young function $\Phi\in B_{p,q}$ if
\[ \int_1^\infty \frac{\Phi(t)^{\frac{q}{p}}}{t^q}\frac{dt}{t} <
    \infty. \]
It was shown in \cite{CM} that $B_p\subsetneq B_{p,q}$ when $p<q$.  The following two results were first proved by P\'erez~\cite{P} with
the stronger $B_p$ condition; they were improved to use the $B_{p,q}$
condition in~\cite{CM}.

\begin{theorem} \label{thm:frac-max}
Given $1< p \leq q<\infty$ and $0<\alpha<d$, suppose $\Phi$ is a Young
function such that $\bar{\Phi}\in B_{p,q}$.  If $(u,v)$ is a pair of
weights such that
\[ \sup_Q |Q|^{\frac{\alpha}{d}-\frac{1}{q}+\frac{1}{p}}
\|u^{\frac{1}{q}}\|_{q,Q}\|v^{-\frac{1}{p}}\|_{\Phi,Q} < \infty, \]
then $M_\alpha : L^p(v) \rightarrow L^q(u)$.
\end{theorem}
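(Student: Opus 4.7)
My plan is to adapt P\'erez's one-weight argument to the off-diagonal, two-weight setting, in the spirit of the unbumped fractional results of Cruz-Uribe and Moen. The proof has four ingredients: dyadic reduction, a Calder\'on-Zygmund stopping-time decomposition, the generalized H\"older inequality \eqref{eqn:gen-holder} combined with the bump hypothesis, and a fractional Orlicz maximal inequality supplied by $\bar\Phi\in B_{p,q}$.

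First I would use the standard three-lattice trick to dominate $M_\alpha$ by a dimensional constant times the maximum over finitely many dyadic fractional maximal operators $M_\alpha^{\Dd}$. For each such $\Dd$, a Calder\'on-Zygmund decomposition at geometric levels $a^k$ (for some large $a>1$) produces a sparse family $\{Q_j^k\}$ of maximal dyadic cubes on which $|Q_j^k|^{\alpha/d}\avgint_{Q_j^k}|f|\,dx\sim a^k$, together with pairwise disjoint sets $E_j^k\subset Q_j^k$ of comparable measure. A standard layer-cake argument then reduces matters to bounding
\[
\sum_{k,j}\Big(|Q_j^k|^{\alpha/d}\avgint_{Q_j^k}|f|\,dx\Big)^q u(Q_j^k)
\]
by a constant times $\|f\|_{L^p(v)}^q$. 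On each sparse cube $Q$, applying \eqref{eqn:gen-holder} to the splitting $|f|=(|f|v^{1/p})\cdot v^{-1/p}$ gives
\[
\avgint_Q|f|\,dx\leq 2\,\|fv^{1/p}\|_{\bar\Phi,Q}\,\|v^{-1/p}\|_{\Phi,Q};
\]
raising to the $q$th power, multiplying by $u(Q)=|Q|\,\|u^{1/q}\|_{q,Q}^q$, and invoking the $A_{p,q}^\alpha$ bump hypothesis to absorb the factor $\|u^{1/q}\|_{q,Q}^q\,\|v^{-1/p}\|_{\Phi,Q}^q$ leaves a sparse sum of the form
\[
[u,v]_{A_{p,q}^\alpha}^{q}\sum_Q\|fv^{1/p}\|_{\bar\Phi,Q}^q\,|Q|^{\gamma},
\]
for an exponent $\gamma$ dictated by the scaling of the bump (in the natural regime $\alpha/d=1/p-1/q$, one obtains $\gamma=q/p$).

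The main obstacle is controlling this sparse sum by $\|fv^{1/p}\|_{L^p}^q$, and this is where $B_{p,q}$ (rather than merely $B_p$) is essential. I would introduce the fractional Orlicz maximal operator
\[
M_{\bar\Phi,\alpha}g(x):=\sup_{Q\ni x}|Q|^{\alpha/d}\|g\|_{\bar\Phi,Q},
\]
use the pointwise bound $|Q|^{\alpha/d}\|g\|_{\bar\Phi,Q}\leq M_{\bar\Phi,\alpha}g(x)$ for $x\in Q$, together with the sparseness $|Q|\lesssim|E_Q|$ and the disjointness of the $E_Q$, to dominate the sparse sum by $\|M_{\bar\Phi,\alpha}(fv^{1/p})\|_{L^q}^q$. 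The $B_{p,q}$ hypothesis then supplies the off-diagonal bound $M_{\bar\Phi,\alpha}:L^p\to L^q$, which is the natural off-diagonal analogue of the diagonal fact that $\bar\Phi\in B_p$ implies $M_{\bar\Phi}:L^p\to L^p$, and the proof closes. The delicate technical point is carefully matching the $|Q|$ exponents produced by the bump against the scaling of the fractional Orlicz maximal operator, and establishing this off-diagonal Orlicz maximal inequality in the form needed.
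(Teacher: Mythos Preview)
Your approach is essentially correct and is the classical sparse-domination argument for the scalar case. One clarification: after you apply the bump hypothesis, the exponent of $|Q|$ you obtain is $\gamma=q/p$ \emph{in general}, not only when $\alpha/d=1/p-1/q$; the bump condition absorbs the entire factor $|Q|^{q\alpha/d}$, leaving $|Q|^{q/p}=|Q|\cdot|Q|^{q\beta/d}$ with $\beta/d=1/p-1/q$. Consequently the Orlicz fractional maximal operator you need at the end is $M_{\beta,\bar\Phi}$ with this $\beta$, not $M_{\alpha,\bar\Phi}$; it is precisely for this index that $\bar\Phi\in B_{p,q}$ yields $M_{\beta,\bar\Phi}:L^p\to L^q$ (see~\eqref{eqn:frac-orliz-max}). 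With that correction your argument closes.

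The paper does not prove Theorem~\ref{thm:frac-max} directly; it is quoted from~\cite{CM} and recovered here as the $n=1$ case of the matrix result, Theorem~\ref{thm:mainmax}. That proof uses a different decomposition: rather than stopping on levels of $|Q|^{\alpha/d}\avgint_Q|f|$, it stops on level sets of an auxiliary operator $M^\D_{\beta,V}$ built from reducing operators (Lemma~\ref{lemma:auxiliary-max}), and controls the remaining piece via a stopping-time maximal function $N_Q$ (Lemma~\ref{lemma:NQq}). In the scalar case the reducing operators collapse to the scalars $\|v^{-1/p}\|_{\Phi,Q}$ and the two arguments are close in spirit, but yours is the more direct route; the paper's machinery is what is needed to handle the noncommutativity in the matrix setting.
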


\begin{theorem} \label{thm:fract-int}
Given $1< p \leq q<\infty$ and $0<\alpha<d$, suppose $\Phi$ and $\Psi$
are Young
functions such that $\bar{\Phi}\in B_{p,q}$ and $\bar{\Psi}\in B_{q',p'}$.  If $(u,v)$ is a pair of
weights such that
\[ \sup_Q |Q|^{\frac{\alpha}{d}-\frac{1}{q}+\frac{1}{p}}
\|u^{\frac{1}{q}}\|_{\Psi,Q}\|v^{-\frac{1}{p}}\|_{\Phi,Q} < \infty, \]
then $I_\alpha : L^p(v) \rightarrow L^q(u)$.
\end{theorem}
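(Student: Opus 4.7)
The plan is to reduce $I_\alpha$ to dyadic sparse fractional operators and then establish the two-weight bound by duality and the generalized H\"older inequality. By a standard reduction to dyadic sparse operators, $|I_\alpha f|$ is pointwise dominated by a finite sum of operators
$$T^\alpha_\Ss f(x) = \sum_{Q\in\Ss}|Q|^{\alpha/d}\avgint_Q |f|\,\chi_Q(x),$$
where $\Ss$ is a sparse family in some dyadic grid; it thus suffices to bound $T^\alpha_\Ss : L^p(v)\to L^q(u)$ uniformly in $\Ss$.

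Set $\sigma=v^{1-p'}$, $\omega=u$, write $f=h\sigma$, and dualize the $L^q(\omega)$-norm against $g\in L^{q'}(\omega)$. Introduce $H=h\sigma^{1/p}$ and $G=g\omega^{1/q'}$, so that $\|H\|_p=\|h\|_{L^p(\sigma)}$ and $\|G\|_{q'}=\|g\|_{L^{q'}(\omega)}$, and apply \eqref{eqn:gen-holder}:
$$\int_Q h\sigma = \int_Q H\,\sigma^{1/p'}\leq 2|Q|\,\|H\|_{\bar\Phi,Q}\,\|\sigma^{1/p'}\|_{\Phi,Q},$$
together with the analogue $\int_Q g\omega\leq 2|Q|\,\|G\|_{\bar\Psi,Q}\,\|\omega^{1/q}\|_{\Psi,Q}$. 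Feeding in the $A^\alpha_{p,q}$-bump to cancel the weighted Orlicz factors then reduces the problem to the uniform sparse bilinear estimate
$$\sum_{Q\in\Ss}|Q|^{1/p'+1/q}\,\|H\|_{\bar\Phi,Q}\,\|G\|_{\bar\Psi,Q} \leq C\|H\|_p\,\|G\|_{q'}.\qquad (\ast)$$

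The main obstacle is $(\ast)$. Sparseness provides pairwise disjoint $E_Q\subset Q$ with $|E_Q|\geq \tfrac12|Q|$, and for $x\in E_Q$ one has $\|H\|_{\bar\Phi,Q}\leq M_{\bar\Phi}H(x)$ and the analogous bound for $G$. In the diagonal case $p=q$ the power is $|Q|^{1/p'+1/p}=|Q|$; after replacing $|Q|$ by $|E_Q|$ the sum becomes an integral, and H\"older with exponents $p,p'$ combined with the $L^p$ and $L^{p'}$ boundedness of $M_{\bar\Phi}$ and $M_{\bar\Psi}$ (which is precisely $\bar\Phi\in B_p=B_{p,p}$ and $\bar\Psi\in B_{p'}=B_{p',p'}$) closes the estimate. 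In the off-diagonal case $p<q$ one has $1/p'+1/q<1$ and direct H\"older fails; the excess $|Q|^{1/p-1/q}$ must be absorbed into sparse Orlicz fractional maximal operators of the form $\sup_{Q\in\Ss,\,Q\ni x}|Q|^{\gamma/d}\|\cdot\|_{\bar\Phi,Q}$ with $\gamma/d=1/p-1/q$, whose $L^p\to L^q$ (respectively, $L^{q'}\to L^{p'}$) boundedness is equivalent to the hypotheses $\bar\Phi\in B_{p,q}$ and $\bar\Psi\in B_{q',p'}$ from~\cite{CM}. Matching these two bounds against the non-standard H\"older pairing is the delicate technical step; it is carried out via a principal-cubes stopping time (or an adapted sparse Carleson embedding), and once this bookkeeping is in place the remaining ingredients are routine in view of Theorem~\ref{thm:frac-max} and the $B_{p,q}$ machinery of~\cite{CM}.
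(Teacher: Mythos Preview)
The paper does not actually prove Theorem~\ref{thm:fract-int}; it is quoted as a known scalar result due to P\'erez (under the stronger $B_p/B_{p'}$ hypotheses) and sharpened to $B_{p,q}/B_{q',p'}$ in~\cite{CM}. The closest proof in the paper is that of the matrix analogue, Theorem~\ref{thm:mainint}, in Section~\ref{section:proof-mainint}. That argument is \emph{asymmetric}: after one application of the generalized H\"older inequality and a stopping time on $\|f\|_{\bar\Phi,Q}$, the remaining $g$-sum over all dyadic subcubes of each stopping cube $P$ is collapsed via the operator $N_P$ of~\eqref{eqn:NQ-defn} and Lemma~\ref{NQOrliczLem}, and only then is the second Orlicz--H\"older inequality applied. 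The resulting pairing is $\|M_{\bar\Phi}^\beta f\|_{L^q}\|M_{\bar\Psi}g\|_{L^{q'}}$, which requires $\bar\Psi\in B_{q'}$, strictly stronger than $B_{q',p'}$; the paper explicitly records this loss in the remark following Theorem~\ref{thm:mainint} and leaves the optimal hypothesis as a conjecture in the matrix setting.

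Your symmetric reduction to the sparse bilinear form~$(\ast)$ is indeed the route that recovers the full hypothesis $\bar\Psi\in B_{q',p'}$, and your treatment of the diagonal case $p=q$ is correct. The gap is the off-diagonal case. You correctly observe that $1/q+1/p'<1$ obstructs a direct H\"older step, and that one wants to land on $M_{\bar\Phi}^\beta:L^p\to L^q$ and $M_{\bar\Psi}^\beta:L^{q'}\to L^{p'}$ simultaneously; but you then only assert that this is ``carried out via a principal-cubes stopping time (or an adapted sparse Carleson embedding)'' and call what remains routine. It is not: making two fractional Orlicz maximal bounds with non-conjugate target exponents fit together in~$(\ast)$ is exactly the substance of the improvement in~\cite{CM} over P\'erez's original argument, and it is not recoverable from Theorem~\ref{thm:frac-max} or from sparseness alone by bookkeeping. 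As written, the proposal stops at the step where the paper's own method already falls short and where the genuine idea is needed.
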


\bigskip

The primary goal of this paper is to generalize
Theorems~\ref{thm:perez-max} through~\ref{thm:fract-int} to the
setting of matrix weights.  To state our results we first give some
basic information on matrix weights.  For more details,
see~\cite{CRM,G,MR1928089}.  A matrix weight $U$ is an $n\times n$
self-adjoint matrix function with locally integrable entries such that
$U(x)$ is positive definite for a.e. $x\in \R^d$.  For a matrix weight
we can define $U^r$ for any $r\in \R$, via diagonalization.
Given an exponent $1 \leq p< \infty$ and an $n\times n$ matrix weight
$U$ on $\R^d$ we define the matrix weighted space $L^p(U)$ to be the
set of measurable, vector-valued functions $f:\R^d \rightarrow \Cn$
such that
$$\|{f}\|_{L^p(U)}=\left(\int_{\R^d} |U(x)^{\frac1p} f(x)|^p\,dx\right)^{\frac{1}{p}}<\infty.$$

Given a matrix weight $U$ and $x\in \R^d$,  define the operator norm of $U(x)$ by
\[ |U(x)|_\op = \sup_{\substack{{e}\in \Cn\\|{e}|=1}}|U(x){e}|.  \]
For brevity, given a norm $\|\cdot\|$ on a some scalar valued Banach
function space (e.g., $L^p$), we will write $\|U\|$ for
$\||U|_\op\|$ and $\|Ue\|$ for $\| |Ue| \|$.

Given two matrix weights $U$ and $V$, a linear operator $T$  satisfies
$$T:L^p(V)\rightarrow L^q(U)$$
if and only if
$$U^{\frac1q}TV^{-\frac1p}:L^p(\R^d,\Cn)\ra L^q(\R^d,\Cn),$$
and it is in this form that we will prove matrix weighted norm
inequalities.  However, this approach no longer works for sublinear
operators such as maximal operators.  Following the approach
introduced in~\cite{CG,G} we define a matrix weighted fractional maximal
operator.  Given matrix weights $U$ and $V$ and $0<\alpha<d$, we
define
\begin{equation} \label{eqn:matrix-max}
M_{\al,U,V} f(x)=\sup_{Q\ni x}\frac{1}{|Q|^{1-\frac{\al}{d}}}\int_Q
|U(x)^{\frac1q}V(y)^{-\frac1p}f(y)|\,dy.
\end{equation}
When $U=V$, this operator was first considered
in~\cite{Isralowitz:2016we}.

Our first result give sufficient conditions on the matrices $U$ and
$V$ for $M_{\alpha,U,V}$ to be bounded from $L^p(\R^d,\Cn)$ to
$L^q(\R^d,\Cn)$.

\begin{theorem} \label{thm:mainmax}
  Given $0 \leq \alpha<d$ and $1<p\leq q<\infty$ such that
  $\frac{1}{p}-\frac{1}{q} \leq \frac{\alpha}{d}$, suppose $\Phi$ is a
  Young function with $\bar{\Phi}\in B_{p,q}$.  If $(U,V)$ is a pair
  of matrix weights such that
\begin{equation}\label{matrixbump1}
[U,V]_{p,q,\Phi} = \sup_Q  |Q|^{\frac{\al}{d}+\frac1q-\frac1p} \left(\,\dashint_Q
  \|U(x)^{\frac1q}V^{-\frac1p}\|_{\Phi,
    Q}\,dx\right)^{\frac{1}{q}}<\infty,\end{equation}
then $M_{\al,U,V}:L^p(\R^d,\Cn)\ra L^q(\R^d,\Cn)$.
\end{theorem}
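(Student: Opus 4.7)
The strategy is to reduce the matrix-weighted estimate to a scalar inequality of Orlicz-fractional-maximal type via the generalized H\"older inequality, and then handle the scalar sum by a Calder\'on--Zygmund stopping-cube argument together with the $B_{p,q}$-boundedness of the scalar Orlicz fractional maximal operator from~\cite{CM}.

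First, using $|U(x)^{1/q}V(y)^{-1/p}f(y)|\le|U(x)^{1/q}V(y)^{-1/p}|_{\op}|f(y)|$ and applying~\eqref{eqn:gen-holder} to the pair $(\Phi,\bar\Phi)$ in the $y$-variable gives the scalar pointwise bound
\[
M_{\al,U,V}f(x)\le 2\sup_{Q\ni x}|Q|^{\al/d}\,\|U(x)^{1/q}V^{-1/p}\|_{\Phi,Q}\,\|f\|_{\bar\Phi,Q}.
\]
A one-third-trick reduction restricts the supremum to a single dyadic grid $\Dd$. We then linearize by picking, for each $x$, a near-maximizing $Q(x)\ni x$; setting $E_Q=\{x:Q(x)=Q\}$ gives a disjoint decomposition with $E_Q\subseteq Q$. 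Raising to the $q$th power, integrating, and bounding $\int_{E_Q}\|U(x)^{1/q}V^{-1/p}\|_{\Phi,Q}^q\,dx$ using~\eqref{matrixbump1} yields
\[
\|M_{\al,U,V}f\|_{L^q}^q\le C\,[U,V]_{p,q,\Phi}^q\sum_Q|Q|^{q/p}\,\|f\|_{\bar\Phi,Q}^q,
\]
where the sum runs over cubes appearing as some $Q(x)$.

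Next, set $\al^{*}=d(1/p-1/q)\in[0,\al]$ so that $|Q|^{q/p}\|f\|_{\bar\Phi,Q}^q=|Q|\bigl(|Q|^{\al^{*}/d}\|f\|_{\bar\Phi,Q}\bigr)^q$. A Calder\'on--Zygmund stopping-cube construction keyed to $|Q|^{\al^{*}/d}\|f\|_{\bar\Phi,Q}$ (using the local doubling of the Orlicz norm on passing to the dyadic parent) extracts a sparse family $\Ss\subseteq\Dd$ with which one obtains
\[
\sum_Q|Q|^{q/p}\|f\|_{\bar\Phi,Q}^q\le C\int_{\Rd}\bigl(M_{\bar\Phi}^{\al^{*}}f(x)\bigr)^q\,dx,\quad M_{\bar\Phi}^{\al^{*}}f(x):=\sup_{Q\ni x}|Q|^{\al^{*}/d}\|f\|_{\bar\Phi,Q}.
\]
Since $\bar\Phi\in B_{p,q}$ implies the $L^p\to L^q$ bound for $M_{\bar\Phi}^{\al^{*}}$ (cf.~\cite{CM}), the asserted inequality follows.

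\emph{Main obstacle.} The delicate point is the sparsification: the cubes $\{Q(x)\}$ are not intrinsically sparse, so their replacement by stopping cubes must preserve both the $|Q|^{q/p}$-scaling and the Orlicz averages $\|f\|_{\bar\Phi,Q}$ simultaneously, and the Carleson-type control $\sum_{Q\subseteq S}|Q|\lesssim|S|$ must be arranged inside each stopping cube. The construction exploits local doubling of Orlicz norms and the weak-type bound on $M_{\bar\Phi}^{\al^{*}}$. Extra care is needed when $\al^{*}<\al$, i.e.\ $1/p-1/q<\al/d$, so that the bump condition correctly absorbs the excess $|Q|^{(\al-\al^{*})/d}$ in the matrix-weighted expression.
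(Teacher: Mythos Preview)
Your approach has a genuine gap at the sparsification step. After linearizing and applying~\eqref{matrixbump1} you arrive at
\[
\|M_{\alpha,U,V}f\|_{L^q}^q\lesssim[U,V]_{p,q,\Phi}^q\sum_{Q}|Q|\bigl(|Q|^{\alpha^*/d}\|f\|_{\bar\Phi,Q}\bigr)^q,
\]
the sum running over the linearization cubes (those $Q$ with $E_Q\neq\emptyset$). To bound this by $\|M_{\bar\Phi}^{\alpha^*}f\|_{L^q}^q$ via stopping cubes you need, inside each stopping cube $S$, the Carleson control $\sum_{Q\subseteq S}|Q|\lesssim|S|$ over linearization cubes of the relevant level. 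But $Q(x)$ was chosen to near-maximize the product $|Q|^{\alpha/d}\|U(x)^{1/q}V^{-1/p}\|_{\Phi,Q}\,\|f\|_{\bar\Phi,Q}$, which depends on $x$ through the matrix-weight factor; nothing forces these cubes to be sparse, and the bump condition---already spent---gives no further control. The step where you replace $\int_{E_Q}\|U(x)^{1/q}V^{-1/p}\|_{\Phi,Q}^q\,dx$ by the full integral over $Q$ discards $|E_Q|$, and that lost factor is precisely what you would need to close the argument. ``Local doubling of Orlicz norms'' and the weak-type bound for $M_{\bar\Phi}^{\alpha^*}$ organize the stopping cubes $S$, but they say nothing about how many linearization cubes sit inside a given $S$.

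The paper's proof avoids this by inserting reducing operators. One factors $U(x)^{1/q}V(y)^{-1/p}=U(x)^{1/q}\mathcal V_Q^\Phi\cdot(\mathcal V_Q^\Phi)^{-1}V(y)^{-1/p}$ and runs the stopping time on the auxiliary scalar operator
\[
M_{\beta,V}^{\D}f(x)=\sup_{Q\in\D}|Q|^{\beta/d}\avgint_Q|(\mathcal V_Q^\Phi)^{-1}V(y)^{-1/p}f(y)|\,dy,\qquad \beta=\alpha^*,
\]
which maps $L^p\to L^q$ by H\"older plus~\cite{CM}. For $x$ in a stopping cube $S$ at level $\approx2^j$, the matrix factor is absorbed into
\[
N_S(x)=\sup_{R\in\D(S),\,R\ni x}|R|^{\frac{\alpha}{d}+\frac1q-\frac1p}\,|U(x)^{1/q}\mathcal V_R^\Phi|_{\op}
\approx\sup_{R}|R|^{(\alpha-\alpha^*)/d}\|U(x)^{1/q}V^{-1/p}\|_{\Phi,R}.
\]
The key missing ingredient in your plan is Lemma~\ref{lemma:NQq} (proved via the iterated decomposition of Lemma~\ref{NQOrliczLem}): $\sup_S\avgint_SN_S(x)^q\,dx<\infty$. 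This controls the supremum over \emph{all} subcubes of $S$, which is strictly stronger than the single-cube bound $\avgint_Q\|U(x)^{1/q}V^{-1/p}\|_{\Phi,Q}^q\,dx\lesssim|Q|^{-q(\alpha/d+1/q-1/p)}$ that~\eqref{matrixbump1} provides. With it one gets
\[
\int|M_{\alpha,U,V}^{\D}f|^q\lesssim\sum_j2^{jq}\sum_{S\in\mathcal S_j}\int_SN_S^q\lesssim\sum_j2^{jq}\sum_{S\in\mathcal S_j}|S|\lesssim\|M_{\beta,V}^{\D}f\|_{L^q}^q\lesssim\|f\|_{L^p}^q.
\]
Your outline is on the right track structurally, but the direct use of~\eqref{matrixbump1} at a single linearization cube cannot substitute for the $N_S$-lemma.
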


\begin{remark}
In the scalar case (i.e., when $n=1$) Theorem~\ref{thm:mainmax}
immediately reduces to Theorem~\ref{thm:perez-max} when $\alpha=0$ and
Theorem~\ref{thm:frac-max} when $\alpha>0$.
\end{remark}

\begin{remark} \label{remark:one-weight}
Theorem~\ref{thm:mainmax} generalizes two results known in the one
weight case (i.e., when $U=V$).  When $p=q$ and $\alpha=0$, if we take
$\Phi(t)=t^{p'}$, then  the condition~\eqref{matrixbump1} reduces to the matrix
$A_p$ condition,
\begin{equation} \label{eqn:one-wt-Ap}
[U]_{A_p} = \sup_Q \avgint_Q \bigg( \avgint_Q
|U(x)^{\frac{1}{p}}U(y)^{-\frac{1}{p}}|_\op^{p'}\,dy\bigg)^{\frac{p}{p'}}
\,dx<\infty,
\end{equation}
 which is sufficient for $M_U=M_{0,U,U}$ to be bounded
on $L^p(\R^d,\Cn)$:  see~\cite{CG,G}.

Similarly, when $\alpha>0$ and $\frac{1}{p}-\frac{1}{q}=\frac{\alpha}{d}$, and we
again take $\Phi(t)=t^{p'}$, then ~\eqref{matrixbump1} becomes the matrix
$A_{p,q}$ condition,
\[ [U]_{A_{p,q}} = \sup_Q \avgint_Q \bigg( \avgint_Q
|U(x)^{\frac{1}{q}}U(y)^{-\frac{1}{p}}|_\op^{p'}\,dy\bigg)^{\frac{q}{p'}}
\,dx<\infty, \]
 introduced in~\cite{Isralowitz:2016we}, where they
showed this condition is
sufficient for $M_{\alpha,U}=M_{\alpha,U,U}$ to map $L^p(\R^d,\Cn)$
into $L^q(\R^d,\Cn)$.
\end{remark}

\begin{remark}
In Theorem~\ref{thm:mainmax} the restriction on $p$ and $q$
that $\frac{1}{p}-\frac{1}{q}\leq \frac{\alpha}{d}$ is natural. For  if the opposite
inequality holds, given matrix weights $U$ and $V$ such that
$M_{\al,U,V}:L^p(\R^d,\Cn)\ra L^q(\R^d,\Cn)$, then $U(x)=0$ almost
everywhere.  See Proposition~\ref{prop:q-upper-bound} below.  In the
scalar case, this was first proved by Sawyer~\cite{Sawyer:1982bt}.
\end{remark}

\medskip

Our second result gives sufficient conditions on the matrices $U$ and
$V$ for $I_\alpha$ to map $L^p(V)$ to $L^q(U)$.   Here and in
Theorem~\ref{thm:CZO}, by $\|\cdot\|_{\Phi_y,Q}$ we mean that the Orlicz
norm is taken with respect to the $y$ variable.  We define
$\|\cdot\|_{\Psi_x,Q}$ similarly.

 \begin{theorem} \label{thm:mainint}
   Given $0<\alpha<d$ and $1<p\leq q<\infty$ such that
   $\frac{1}{p}-\frac{1}{q}\leq \frac{\alpha}{d}$, suppose that $\Phi$ and $\Psi$ are Young
   functions with $\bar{\Phi}\in B_{p,q}$ and $\bar{\Psi}\in B_{q'}$.
   If $(U,V)$ is a pair of matrix weights such that
\begin{equation} \label{matrixbump2}
[U,V]_{p,q,\Phi,\Psi}= \sup_Q |Q|^{\frac{\al}{d}+\frac1q-\frac1p}
\Big\|\|U(x)^{\frac1q}V(y)^{-\frac1p}\|_{\Phi_y,Q}\Big\|_{\Psi_x,Q}<\infty,
\end{equation}
then $I_{\al}:L^p(V) \rightarrow L^q(U)$.
\end{theorem}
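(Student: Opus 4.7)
The plan is to combine a sparse decomposition of the bilinear form of $I_\alpha$ with two applications of the generalized H\"older inequality in the Orlicz scale, closing the argument with fractional Orlicz maximal function bounds; this is the natural matrix analogue of the scalar proof of Theorem~\ref{thm:fract-int}.

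By the definition of $L^p(V)$, the target estimate is equivalent to the vector-valued bound $\|U^{\frac1q}I_\alpha V^{-\frac1p}f\|_{L^q(\Rd,\Cn)}\lesssim\|f\|_{L^p(\Rd,\Cn)}$. Duality reduces this to
$$|\langle U^{\frac1q}I_\alpha V^{-\frac1p}f,g\rangle|\lesssim[U,V]_{p,q,\Phi,\Psi}\|f\|_{L^p}\|g\|_{L^{q'}}$$
for all $f\in L^p(\Rd,\Cn)$ and $g\in L^{q'}(\Rd,\Cn)$. Since $U^{1/q}$ is selfadjoint and the Riesz kernel is positive, the bilinear form is dominated pointwise by
$$\int_{\Rd}\int_{\Rd}\frac{|U(x)^{\frac1q}V(y)^{-\frac1p}|_{\op}\,|f(y)|\,|g(x)|}{|x-y|^{d-\alpha}}\,dy\,dx.$$
Using the $3^d$-shifted dyadic lattice decomposition of $|x-y|^{\alpha-d}$ together with the Calder\'on--Zygmund stopping-time argument underlying Lerner's sparse domination of $I_\alpha$, there exist finitely many sparse families $\Ss_j$ such that this integral is majorized by a sum of terms of the form
$$\sum_{Q\in\Ss_j}|Q|^{\frac{\alpha}{d}-1}\int_Q\int_Q|U(x)^{\frac1q}V(y)^{-\frac1p}|_{\op}\,|f(y)|\,|g(x)|\,dy\,dx.$$

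On each $Q\in\Ss_j$ I would apply the generalized H\"older inequality \eqref{eqn:gen-holder} first in $y$ with the Young pair $(\Phi,\bar\Phi)$ and then in $x$ with $(\Psi,\bar\Psi)$. Together with the bump hypothesis \eqref{matrixbump2}, this yields the cube-wise bound $4\,[U,V]_{p,q,\Phi,\Psi}\,|Q|^{1+\frac1p-\frac1q}\,\||f|\|_{\bar\Phi,Q}\,\||g|\|_{\bar\Psi,Q}$. To handle the resulting sum, set $\gamma=d/p-d/q\geq 0$ and introduce the fractional Orlicz maximal operators $M_{\bar\Phi,\gamma}h(x)=\sup_{Q\ni x}|Q|^{\gamma/d}\|h\|_{\bar\Phi,Q}$ and $M_{\bar\Psi}h(x)=\sup_{Q\ni x}\|h\|_{\bar\Psi,Q}$. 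Sparseness supplies disjoint majority subsets $E_Q\subset Q$ with $|E_Q|\geq|Q|/2$; for $x\in E_Q$ one has $|Q|^{1/p-1/q}\||f|\|_{\bar\Phi,Q}\leq M_{\bar\Phi,\gamma}|f|(x)$ and $\||g|\|_{\bar\Psi,Q}\leq M_{\bar\Psi}|g|(x)$, and so disjointness together with H\"older's inequality with exponents $(q,q')$ yields
$$\sum_{Q\in\Ss_j}|Q|^{1+\frac1p-\frac1q}\||f|\|_{\bar\Phi,Q}\||g|\|_{\bar\Psi,Q}\leq 2\,\|M_{\bar\Phi,\gamma}|f|\|_{L^q}\,\|M_{\bar\Psi}|g|\|_{L^{q'}}.$$
The hypothesis $\bar\Phi\in B_{p,q}$ delivers $M_{\bar\Phi,\gamma}:L^p\to L^q$ and $\bar\Psi\in B_{q'}$ delivers $M_{\bar\Psi}:L^{q'}\to L^{q'}$, via the Orlicz maximal bounds of~\cite{P,CM}.

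The principal obstacle is the sparse decomposition itself: the integrand $|U(x)^{1/q}V(y)^{-1/p}|_{\op}\,|f(y)|\,|g(x)|$ is not a tensor product in $(x,y)$, so Lerner's scalar sparse domination for $I_\alpha$ does not apply verbatim. One must instead perform a Calder\'on--Zygmund stopping-time construction directly on the non-separable kernel, working within each of the $3^d$ shifted dyadic lattices and aggregating the contributions over cubes whose sidelength is comparable to $|x-y|$. Once that decomposition is secured, the remaining Orlicz-bump manipulations are direct matrix analogues of the scalar arguments of~\cite{P,CM}.
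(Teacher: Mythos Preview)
Your overall strategy is right in spirit, and the closing Orlicz-maximal argument is exactly how the paper finishes. But the step you yourself flag as ``the principal obstacle'' is a genuine gap, and it cannot be filled by a direct stopping-time construction on the non-separable kernel. Sparse domination of the dyadic fractional integral $I_\alpha^\D h$ proceeds by selecting cubes where $\langle h\rangle_Q$ exceeds a threshold; here the relevant function would be $K(x,\cdot)|f|$ with $K(x,y)=|U(x)^{1/q}V(y)^{-1/p}|_\op$, so any stopping family produced this way depends on $x$, and the argument collapses. There is no general sparse domination of $\sum_{Q\in\D}|Q|^{\alpha/d-1}\int_Q\int_Q F(x,y)\,dy\,dx$ by the same expression summed over a sparse subfamily for arbitrary $F\ge 0$, and your proposal does not supply one.

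The paper resolves this not by sparsifying the full bilinear form but by first \emph{decoupling} the variables via reducing operators. Writing $\langle V(y)^{-1/p}f(y),\,U(x)^{1/q}g(x)\rangle_{\Cn}=\langle (\mathcal{V}_Q^\Phi)^{-1}V(y)^{-1/p}f(y),\,\mathcal{V}_Q^\Phi U(x)^{1/q}g(x)\rangle_{\Cn}$ and applying the generalized H\"older inequality in $y$ produces the scalar quantity $\|f\|_{\bar\Phi,Q}$, on which one \emph{can} run a standard Calder\'on--Zygmund stopping time to obtain a sparse family $\Ss$. For each maximal $P\in\Ss$ the remaining sum over \emph{all} dyadic $Q\subset P$ of the residual integrals $\int_Q|\mathcal{V}_Q^\Phi U(x)^{1/q}g(x)|\,dx$ is then controlled, using $\alpha>0$ for geometric decay in scale, by $|P|^{\beta/d}\int_P N_P(x)|g(x)|\,dx$, with $N_P$ the auxiliary maximal operator of \eqref{eqn:NQ-defn}. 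The matrix-specific ingredient you are missing is Lemma~\ref{NQOrliczLem}, which shows $\sup_Q\|N_Q\|_{\Psi,Q}<\infty$ under \eqref{matrixbump2}; this is where the $\Psi$-bump on $U$ actually enters, and it is precisely what substitutes for the unavailable sparse domination. After that, generalized H\"older in $x$ and the Orlicz-maximal bounds finish the proof exactly as you describe.
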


\begin{remark}
In the scalar case, Theorem~\ref{thm:mainint} reduces to a special case of
Theorem~\ref{thm:fract-int} in that we do not recapture the weaker
hypothesis $\bar{\Psi}\in B_{q',p'}$.  This is a consequence of
our proof; we conjecture that this result remains true with this
weaker hypothesis.
\end{remark}

\begin{remark}
In the one weight case, it was proved in~\cite{Isralowitz:2016we} that
if $\frac{1}{p}-\frac{1}{q}=\frac{\alpha}{d}$ and $U\in A_{p,q}$, then
$I_\alpha : L^p(U) \rightarrow L^q(U)$.
\end{remark}

\begin{remark}
As for the fractional maximal operator, the restriction that
$\frac{1}{p}-\frac{1}{q}\leq \frac{\alpha}{d}$ is natural.  In the
scalar case (i.e., when $n=1$),  if the
opposite inequality holds, then, since $M_\alpha f(x)
\lesssim I_\alpha(|f|)(x)$, we have that the weights are trivial.
See~\cite{Sawyer:1982bt} for details.
\end{remark}

\medskip

Our third result gives sufficient conditions on the matrices $U$ and
$V$ for a Calder\'on-Zygmund operator $T$ to map $L^p(V)$ to
$L^p(U)$.

\begin{theorem} \label{thm:CZO}
Given $1 < p < \infty$,  suppose $\Phi$
  and $\Psi$ are Young functions with $\bar{\Phi}\in B_{p}$ and
  $\bar{\Psi}\in B_{p'}$.  If $(U,V)$ is a pair of matrix weights
such that
\begin{equation} \label{matrixbump3}
[U,V]_{p,\Phi,\Psi} = \sup_Q
\Big\|\|U(x)^{\frac1p}V(y)^{-\frac1p}\|_{\Phi_y,Q}\Big\|_{\Psi_x,Q}<\infty,
\end{equation}
and if $T$ is a   Calder\'on-Zygmund operator, then  $T : L^p(V) \rightarrow
  L^p(U)$.
\end{theorem}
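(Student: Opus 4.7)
My plan is to reduce the theorem to a matrix-weighted sparse operator bound via the now-standard sparse domination of Calder\'on--Zygmund operators, and then to close the sparse estimate through duality, two applications of the generalized H\"older inequality in Orlicz spaces, and P\'erez's characterization of the $L^p$-boundedness of Orlicz maximal operators.

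\textbf{Sparse reduction.} Since $T$ acts coordinate-wise on $\Cn$-valued functions, applying Lerner's (pointwise or bilinear) sparse domination theorem to each coordinate and then combining the finitely many dyadic grids that arise into a single $\eta$-sparse family $\Ss$, it suffices to prove, uniformly in $\Ss$, that the sparse operator
\[ T_\Ss \V{f}(x) = \sum_{Q\in\Ss} \chi_Q(x)\, \avgint_Q \V{f}(y)\, dy \]
satisfies $\|U^{1/p} T_\Ss V^{-1/p}\|_{L^p(\Rd,\Cn)\to L^p(\Rd,\Cn)} \lesssim [U,V]_{p,\Phi,\Psi}$.

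\textbf{Bilinear form and double Orlicz H\"older.} By $L^p$--$L^{p'}$ duality on $\Cn$-valued functions, this norm bound is equivalent to estimating the bilinear form
\[ \Lambda_\Ss(\V h, \V g) = \sum_{Q\in\Ss} |Q|\, \avgint_Q\!\!\avgint_Q \ip{U(x)^{1/p} V(y)^{-1/p}\V h(y)}{\V g(x)}\, dy\, dx \]
by $\|\V h\|_p \|\V g\|_{p'}$ for $\V h \in L^p$, $\V g \in L^{p'}$. Bounding the inner product pointwise by $|U(x)^{1/p} V(y)^{-1/p}|_\op\,|\V h(y)|\,|\V g(x)|$ and applying the generalized H\"older inequality \eqref{eqn:gen-holder} first in $y$ with the pair $(\Phi,\bar\Phi)$, then in $x$ with $(\Psi,\bar\Psi)$, I obtain
\[ |\Lambda_\Ss(\V h, \V g)| \le 4\,[U,V]_{p,\Phi,\Psi} \sum_{Q\in\Ss} |Q|\, \||\V h|\|_{\bar\Phi,Q}\, \||\V g|\|_{\bar\Psi,Q}, \]
so that the double Orlicz norm of \eqref{matrixbump3} factors out exactly.

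\textbf{Sparseness and Orlicz maximal functions.} Using pairwise disjoint subsets $E_Q \subset Q$ with $|E_Q| \ge \eta|Q|$ (sparseness), and noting that for $x\in E_Q$ one has $\||\V h|\|_{\bar\Phi,Q} \le M_{\bar\Phi}|\V h|(x)$ and similarly for $\V g$, I convert the remaining sum into
\[ \sum_{Q\in\Ss} |Q|\, \||\V h|\|_{\bar\Phi,Q} \||\V g|\|_{\bar\Psi,Q} \le \eta^{-1} \int_{\Rd} M_{\bar\Phi}|\V h|(x)\, M_{\bar\Psi}|\V g|(x)\, dx, \]
where $M_\Theta f(x) = \sup_{Q\ni x} \|f\|_{\Theta,Q}$. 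A final scalar H\"older, combined with P\'erez's classical bounds $M_{\bar\Phi}:L^p\to L^p$ (valid since $\bar\Phi\in B_p$) and $M_{\bar\Psi}:L^{p'}\to L^{p'}$ (valid since $\bar\Psi\in B_{p'}$), yields $|\Lambda_\Ss(\V h,\V g)| \lesssim [U,V]_{p,\Phi,\Psi} \|\V h\|_p \|\V g\|_{p'}$, which closes the estimate.

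\textbf{Main obstacle.} The principal technical issue is threading the noncommuting quantity $|U(x)^{1/p}V(y)^{-1/p}|_\op$ through the two successive Orlicz H\"older estimates so that the resulting quantity is precisely the double Orlicz norm appearing in \eqref{matrixbump3}; in particular the order of integration matters, and one must integrate out $y$ first (producing an $x$-dependent Orlicz norm) before the outer H\"older in $x$. Handling the vector-valued sparse domination step is standard but requires care in passing from coordinate-wise control to a single dominating sparse family that controls $|\V f|$, rather than bookkeeping each coordinate separately.
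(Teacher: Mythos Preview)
Your estimate of the bilinear form $\Lambda_\Ss$ for the \emph{vector} sparse operator $T_\Ss$ is correct, and in fact a bit more direct than the paper's version (the paper routes through reducing operators $\Uu_Q^\Psi,\Vv_Q^\Phi$, whereas your double Orlicz--H\"older on $|U(x)^{1/p}V(y)^{-1/p}|_\op|h(y)||g(x)|$ extracts $[U,V]_{p,\Phi,\Psi}$ immediately). The gap is in the sparse reduction. Coordinate-wise scalar sparse domination gives, at best,
\[
|\langle U^{1/p}TV^{-1/p}h,\,g\rangle|
\lesssim \sum_{Q\in\Ss} |Q|\Big(\avgint_Q |V^{-1/p}h|\Big)\Big(\avgint_Q |U^{1/p}g|\Big),
\]
because after applying $|\langle T f_j,g_j\rangle|\lesssim \sum_Q|Q|(\avgint_Q|f_j|)(\avgint_Q|g_j|)$ to each coordinate and summing, Cauchy--Schwarz in $j$ collapses to the product of the \emph{scalar} averages of $|V^{-1/p}h|$ and $|U^{1/p}g|$. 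At that point the matrices $U(x)^{1/p}$ and $V(y)^{-1/p}$ have been decoupled: the quantity $|U(x)^{1/p}V(y)^{-1/p}|_\op$ never appears, and the bump condition \eqref{matrixbump3}, which controls $|\Uu_Q^\Psi\Vv_Q^\Phi|_\op$ rather than the larger $|\Uu_Q^\Psi|_\op|\Vv_Q^\Phi|_\op$, cannot be invoked. This is not a bookkeeping issue about combining sparse families; it is structural.

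This is exactly what the convex body domination of Nazarov \emph{et al.} is designed to repair: it yields $Tf(x)\in C\sum_{Q\in\Ss}\lla f\rra_Q\chi_Q(x)$, equivalently $Tf(x)=C\sum_Q\chi_Q(x)\avgint_Q\varphi_Q(x,y)f(y)\,dy$ with $\|\varphi_Q(x,\cdot)\|_\infty\le 1$, so that after pairing with $g$ one has the integrand $\varphi_Q(x,y)\langle U(x)^{1/p}V(y)^{-1/p}h(y),g(x)\rangle$ and the crucial joint factor $|U(x)^{1/p}V(y)^{-1/p}|_\op$ survives. Once you substitute convex body domination for the coordinate-wise step, your remaining argument (double Orlicz--H\"older, sparseness, $M_{\bar\Phi}$ and $M_{\bar\Psi}$ bounds) goes through unchanged and matches the paper's proof.
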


\begin{remark}
Theorem~\ref{thm:CZO} also holds if $T$ is a Haar shift operator or a
paraproduct.  See the discussion in Section~\ref{section:CZO}
below.
\end{remark}

\medskip

As a corollary to Theorem~\ref{thm:CZO} we can prove quantitative
one weight  estimates for Calder\'on-Zygmund operators.   To state our
result, recall that if $W$ is in matrix $A_p$, then for every $e\in
\Cn$, $|W^\frac{1}{p} {e}|^p$ is a scalar $A_p$ weight, and
\[ [|W^\frac{1}{p} {e}|^p ]_{A_p} \leq [W]_{A_p}. \]
Thus, following~\cite{Nazarov:2017ufa}, we can then define the
``scalar $A_\infty$'' constant of $W$ by
\[ [W]_{A_{p,\infty}^{\sca} }
 = \sup_{e \in \Cn} [|W^{\frac{1}{p}}e|^p ]_{A_\infty}. \]
(We will make precise our definition of $A_\infty$ in  Section~\ref{section:CZO}.)

\begin{corollary} \label{SharpSparseCor}
Given $1<p<\infty$, suppose $W$ is a matrix $A_p$ weight.  If $T$ is a
Calder\'on-Zygmund operator, then
\[ \|T\|_{L^p(W) } \lesssim [W]_{A_p}^{\frac{1}{p}} \,
  [W^{-\frac{p'}{p}}]_{A^{\sca}_{p',\infty}}^{\frac{1}{p}}\,
 [W] _{A^{\sca}_{p,\infty}} ^{\frac{1}{p'}} \lesssim [W]_{A_p}^{1+\frac{1}{p-1}-\frac{1}{p}}. \]
 \end{corollary}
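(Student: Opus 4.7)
The plan is to apply Theorem~\ref{thm:CZO} with $U=V=W$ and a suitable pair of log bumps, then to reduce the iterated Orlicz norm in~\eqref{matrixbump3} to the matrix $A_p$ functional by combining the sharp reverse Hölder inequality for $A_\infty$ weights with a scalar Orlicz/$L^{r}$ comparison.

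The first step is to fix $\Phi(t)\simeq t^{p'}\log(e+t)^{p'-1+\delta_1}$ and $\Psi(t)\simeq t^{p}\log(e+t)^{p-1+\delta_2}$, so that $\bar{\Phi}\in B_p$ and $\bar{\Psi}\in B_{p'}$; their $B$-constants grow like $\delta_i^{-1}$. I would take $\delta_1,\delta_2>0$ of order $1/\log(e+[W^{-p'/p}]_{A^{\sca}_{p',\infty}})$ and $1/\log(e+[W]_{A^{\sca}_{p,\infty}})$, respectively, which is precisely the regime in which the sharp Hytönen--Pérez reverse Hölder inequality yields the scalar lemma
\[
\|w^{1/r}\|_{\Phi,Q}\lesssim [w]_{A_\infty}^{1/r'}\Big(\dashint_Q w\,dx\Big)^{1/r}
\]
for any scalar $A_\infty$ weight $w$ and any $r>1$, with a purely dimensional implicit constant.

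The main technical step is to control the inner and outer matrix Orlicz norms by scalar $L^{p'}$-averages of the operator norm. For the inner norm in $y$, fix $x\in Q$ and decompose $|W(x)^{1/p}W(y)^{-1/p}|_{\op}$, up to a dimensional factor, into scalar quantities $|W(y)^{-1/p}f_k(x)|$ along a basis $\{f_k(x)\}\subset \Cn$. For each $k$, the map $y\mapsto |W(y)^{-1/p}f_k(x)|^{p'}$ is a scalar $A_\infty$ weight with constant at most $[W^{-p'/p}]_{A^{\sca}_{p',\infty}}$, so the scalar lemma yields
\[
\|W(x)^{1/p}W(y)^{-1/p}\|_{\Phi_y,Q}\lesssim [W^{-p'/p}]_{A^{\sca}_{p',\infty}}^{1/p}\Big(\dashint_Q |W(x)^{1/p}W(y)^{-1/p}|_{\op}^{p'}\,dy\Big)^{1/p'}.
\]
A symmetric argument for the outer norm $\|\cdot\|_{\Psi_x,Q}$, now using that the directional scalar $A_\infty$ constants are bounded by $[W]_{A^{\sca}_{p,\infty}}$, bounds~\eqref{matrixbump3} by
\[
[W^{-p'/p}]_{A^{\sca}_{p',\infty}}^{1/p}\,[W]_{A^{\sca}_{p,\infty}}^{1/p'}\,[W]_{A_p}^{1/p}.
\]
Fed into Theorem~\ref{thm:CZO} this is the first inequality. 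The second is arithmetic: $[W]_{A^{\sca}_{p,\infty}}\leq [W]_{A_p}$ and $[W^{-p'/p}]_{A^{\sca}_{p',\infty}}\leq [W^{-p'/p}]_{A_{p'}}=[W]_{A_p}^{1/(p-1)}$, and the exponent of $[W]_{A_p}$ sums to $\tfrac{1}{p}+\tfrac{1}{p(p-1)}+\tfrac{p-1}{p}=1+\tfrac{1}{p-1}-\tfrac{1}{p}$.

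The principal technical obstacle is the careful bookkeeping of $\delta_i$-dependence. The $B_p,B_{p'}$ constants of $\bar{\Phi},\bar{\Psi}$ enter the quantitative form of Theorem~\ref{thm:CZO} and blow up like $\delta_i^{-1}$, while the scalar lemma forces $\delta_i\lesssim 1/\log[\,\cdot\,]_{A_\infty}$. The choice above is exactly what absorbs the resulting logarithmic factors into polynomial scalar-$A_\infty$ dependence.
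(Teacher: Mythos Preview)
Your overall strategy is exactly the paper's: feed Theorem~\ref{thm:CZO} with $U=V=W$ and bumps calibrated to the scalar $A_\infty$ constants via the sharp reverse H\"older inequality of Hyt\"onen--P\'erez, then read off the quantitative dependence from the maximal operator norms.  The divergence is in the choice of bumps, and here your log bumps cost you logarithmic factors that the stated corollary does not allow.

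The paper takes \emph{power} bumps $\Phi(t)=t^{rp'}$ and $\Psi(t)=t^{sp}$, with $r,s$ the sharp reverse H\"older exponents~\eqref{eqn:rh-exp}.  With this choice the reverse H\"older inequality applies \emph{verbatim}, so $[W,W]_{p,\Phi,\Psi}\lesssim [W]_{A_p}^{1/p}$ with a purely dimensional constant, and the entire polynomial $A_\infty$ dependence enters only through the sharp estimate $\|M_{\bar\Phi}\|_{L^p}\lesssim (r')^{1/p}\approx [W^{-p'/p}]_{\Apprimeinf}^{1/p}$ (and symmetrically for $\Psi$).  No trade-off arises.

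With your log bumps and $\delta_i\sim 1/\log K_i$, your scalar lemma is correct (indeed $K^{\delta}\approx e$ kills the excess exponent), so the bump constant already carries the full polynomial factor $K_1^{1/p}K_2^{1/p'}[W]_{A_p}^{1/p}$.  But you then still multiply by $\|M_{\bar\Phi}\|_{L^p}\|M_{\bar\Psi}\|_{L^{p'}}\sim \delta_1^{-1/p}\delta_2^{-1/p'}\sim(\log K_1)^{1/p}(\log K_2)^{1/p'}$.  These logarithms are not absorbed anywhere; your final bound exceeds the first inequality of the corollary by precisely this factor.  The remedy is not sharper bookkeeping of $\delta_i$---the trade-off is structural for log bumps---but simply to switch to power bumps, for which the reverse H\"older inequality is an exact match.
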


\begin{remark}
  Corollary~\ref{SharpSparseCor} appears to be the first quantitative
  estimate for matrix weighted inequalities for singular integrals
  for all $p$, $1<p<\infty$.
  Qualitative one weight, matrix $A_p$ estimates for
  Calder\'on-Zygmund operators were first proved in~\cite{CG,G}.
  Bickel, Petermichl and Wick~\cite{MR3452715} proved that for the
  Hilbert transform $H$,
  $\|H\|_{L^2(W)} \lesssim [W]_{A_2}^{\frac{3}{2}}\log([W]_{A_2})$.
  This result was improved by Nazarov, {\em et
    al.}~\cite{Nazarov:2017ufa} and Culiuc, di Plinio and
  Ou~\cite{Culiuc:2016vp} and extended it to all Calder\'on-Zygmund
  operators $T$, getting
  $\|T\|_{L^2(W)} \lesssim [W]_{A_2}^{\frac{3}{2}}$.  (In fact,
  in~\cite{Nazarov:2017ufa} they prove a stronger result which we will
  discuss below.)  Corollary~\ref{SharpSparseCor} reduces to this
  estimate when $p=2$.

We doubt that our estimate is sharp:  it is reasonable to conjecture
that the sharp exponent for matrix weights is the same as in the
scalar case:  $\max\{1,\frac{1}{p-1}\}$.  We do note that in the
scalar case, our exponent is sharper than what would be gotten from
Rubio de Francia extrapolation, which starting from the exponent
$\frac{3}{2}$ when $p=2$ is  $\frac{3}{2}\max\{1,\frac{1}{p-1}\}$.  In
particular, it is asymptotically sharp as $p\rightarrow \infty$.
\end{remark}

\medskip

We now consider the two weight
matrix $A_{p,q}$ condition,
\begin{equation} \label{eqn:matrix-Apq}
[U,V]_{A_{p,q}^\alpha } = \sup_Q
|Q|^{\frac{\alpha}{d}+\frac{1}{q}-\frac{1}{p}}
\left(\avgint_Q \bigg( \avgint_Q
|U(x)^{\frac{1}{q}}V(y)^{-\frac{1}{p}}|_\op^{p'}\,dy\bigg)^{\frac{q}{p'}}
\,dx\right)^{\frac{1}{q}}<\infty.
\end{equation}
By the properties of Orlicz norms, we have that
$[U,V]_{A_{p,q}^\alpha }$ is dominated by $[U,V]_{p,q,\Phi}$ and
$[U,V]_{p,q,\Phi,\Psi}$.  As we noted in
Remark~\ref{remark:one-weight} above, this condition is sufficient in
the one weight case for the strong type, two weight norm inequalities
for maximal and fractional integrals.  However, even in the scalar
case this condition is not sufficient for two weight norm inequalities
for fractional maximal or integral operators~\cite{CruzUribe:2016ji}.
It is known to be necessary and sufficient for averaging operators to
map $L^p(v)$ into $L^p(u)$~\cite{MR1622773} and for the fractional
maximal operator to map $L^p(v)$ into
$L^{q,\infty}(u)$~\cite{MR2797562}.  We give two generalizations of
these results to the matrix setting.   Since these results include
endpoint estimates, we extend the definition of $A_{p,q}$ to the case
$p=1$:  given matrix weights $U$ and $V$, define
\begin{equation} \label{eqn:matrixA1q}
 [U,V]_{1,q}^\alpha
= \sup_Q |Q|^{\frac{\alpha}{d}+\frac{1}{q}-1}
\esssup_{y\in Q} \left(\avgint_Q
  \OP{U^{\frac{1}{q}}(x)V^{-1}(y)}\,dx\right)^{\frac{1}{q}}
 < \infty.
\end{equation}

Our first result concerns averaging operators.  For $0\leq \alpha <d$,
given a cube $Q$, define
\[ A_Q^\alpha f(x) = |Q|^{\frac{\alpha}{d}} \avgint_Q f(y)\,dy \cdot \chi_Q(x). \]
More generally, given a family $\Q$ of disjoint cubes, define
\[ A_\Q^\alpha f(x) = \sum_{Q\in \Q} A_Q^\alpha f(x). \]

\begin{theorem} \label{theorem:avg-op}
Given $0\leq \alpha<d$,  $1\leq p\leq q<\infty$ such that
$\frac{1}{p}-\frac{1}{q}\leq \frac{\alpha}{d}$, and a pair of matrix weights $(U,V)$,  the following are equivalent:
\begin{enumerate}

\item $(U,V) \in A_{p,q}^\alpha$;

\item Given any set $\Q$ of pairwise disjoint cubes in $\Rd$,
\[ \|A_\Q^\alpha f\|_{L^q(U)} \lesssim
  [U,V]_{A_{p,q}^\alpha}\|f\|_{L^p(V)}, \]
where the constant is independent of $\Q$.
\end{enumerate}
\end{theorem}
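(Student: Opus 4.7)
The natural plan is to treat $(1)\Rightarrow(2)$ as the substantive direction via a pointwise H\"older estimate combined with a disjointness summation, and to recover $(2)\Rightarrow(1)$ by testing the hypothesis on single-cube families $\Q=\{Q\}$ with suitable vector-valued test functions. The arguments split between $1<p\le q$ and the endpoint $p=1$, since the $L^{p'}_y$ structure in \eqref{eqn:matrix-Apq} degenerates into the essential-supremum form \eqref{eqn:matrixA1q} at $p=1$.

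For $(1)\Rightarrow(2)$ with $p>1$, I fix a disjoint family $\Q$ and $f\in L^p(V)$. For each $Q\in\Q$ I bring $U^{\frac{1}{q}}(x)$ inside the averaging integral, insert $V^{-\frac{1}{p}}(y)V^{\frac{1}{p}}(y)=I$, and use the operator-norm bound to obtain the pointwise estimate
\[
|U^{\frac{1}{q}}(x)A_Q^\alpha f(x)|\le |Q|^{\frac{\alpha}{d}}\avgint_Q\OP{U^{\frac{1}{q}}(x)V^{-\frac{1}{p}}(y)}\,|V^{\frac{1}{p}}(y)f(y)|\,dy.
\]
H\"older's inequality in $y$ with exponents $p,p'$, followed by raising to the $q$-th power and integrating in $x$ over $Q$, produces after invoking \eqref{eqn:matrix-Apq} the per-cube bound
\[
\int_Q|U^{\frac{1}{q}}(x)A_Q^\alpha f(x)|^q\,dx\le [U,V]_{A_{p,q}^\alpha}^q\Bigl(\int_Q|V^{\frac{1}{p}}(y)f(y)|^p\,dy\Bigr)^{q/p}.
\]
Summing over $Q\in\Q$, using disjointness together with $q/p\ge 1$ and the elementary inequality $\sum_Q a_Q^{q/p}\le(\sum_Q a_Q)^{q/p}$ (for $a_Q\ge 0$), finishes the argument with constant $[U,V]_{A_{p,q}^\alpha}$. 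The endpoint $p=1$ follows the same outline, but H\"older in $y$ is replaced by pulling out the essential supremum, and Minkowski's integral inequality (valid because $q\ge 1$) is used to swap the $L^q_x$ norm with the remaining $y$-integration, producing the constant $[U,V]_{1,q}^\alpha$.

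For $(2)\Rightarrow(1)$, I apply (2) to the single-cube family $\Q=\{Q\}$. Setting $f(y)=\chi_Q(y)V^{-\frac{1}{p}}(y)\vec h(y)$ so that $\|f\|_{L^p(V)}=\|\vec h\|_{L^p(Q,\Cn)}$, the hypothesis becomes
\[
|Q|^{q(\frac{\alpha}{d}-1)}\int_Q\Bigl|U^{\frac{1}{q}}(x)\int_Q V^{-\frac{1}{p}}(y)\vec h(y)\,dy\Bigr|^q dx\le C^q\|\vec h\|_{L^p(Q,\Cn)}^q
\]
for every $\vec h$. Pairing against a fixed direction $\vec w\in\Cn$ and using sharp $L^p$--$L^{p'}$ duality in $\vec h$ identifies, for each $x$, the quantity $\sup_{|\vec w|=1}\bigl(\avgint_Q|U^{\frac{1}{q}}(x)V^{-\frac{1}{p}}(y)\vec w|^{p'}\,dy\bigr)^{1/p'}$; a finite-dimensional comparison on the unit sphere of $\Cn$ (equivalently, the Nazarov--Treil reducing-operator formalism) then converts this, up to a constant depending only on $n$, into $\bigl(\avgint_Q\OP{U^{\frac{1}{q}}(x)V^{-\frac{1}{p}}(y)}^{p'}dy\bigr)^{1/p'}$. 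Raising to the $q$-th power in $x$, integrating over $Q$, and taking the supremum over $Q$ then delivers $[U,V]_{A_{p,q}^\alpha}<\infty$. In the endpoint $p=1$, I instead choose $y_0\in Q$ nearly attaining the essential supremum in \eqref{eqn:matrixA1q} and test with a function concentrated near $y_0$ in the appropriate spectral direction, recovering $[U,V]_{1,q}^\alpha$ directly. The main technical point is the $\Cn$-sphere comparison, since the $A_{p,q}^\alpha$ condition encodes the full operator norm of $U^{\frac{1}{q}}(x)V^{-\frac{1}{p}}(y)$ whereas the scalar-valued testing naturally produces only one-directional information; bridging this gap is where the dimensional constant enters.
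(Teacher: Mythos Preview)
Your sufficiency argument $(1)\Rightarrow(2)$ is correct and is exactly what the paper does; the endpoint $p=1$ via Minkowski is a clean variant of the paper's Fubini remark. Your $p=1$ necessity is also essentially the paper's Lebesgue-differentiation argument.

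The gap is in $(2)\Rightarrow(1)$ for $p>1$. You write that $L^p$--$L^{p'}$ duality in $\vec h$ ``identifies, for each $x$,'' the quantity $(\avgint_Q|U^{1/q}(x)V^{-1/p}(y)\vec w|^{p'}dy)^{1/p'}$. But the hypothesis only bounds the $L^q_x$-norm of $|U^{1/q}(x)\vec v|$ for the \emph{single} vector $\vec v=\int_Q V^{-1/p}\vec h$, and the duality supremum over $\vec h$ sits \emph{outside} that $x$-integral. Pulling it inside would require the optimizing $\vec h$ to depend on $x$, which it cannot. Concretely, the $L^p\to L^q$ norm of the rank-$n$ map $\vec h\mapsto U^{1/q}(\cdot)\int_Q V^{-1/p}\vec h$ does not in general dominate $\bigl(\int_Q(\avgint_Q\OP{U^{1/q}(x)V^{-1/p}(y)}^{p'}dy)^{q/p'}dx\bigr)^{1/q}$; your subsequent ``finite-dimensional comparison'' only converts $\sup_{\vec w}$ to $|\cdot|_\op$ in $y$ and does not repair this.

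The fix---and this is what the paper does---is to apply the reducing operator in $x$ \emph{before} dualizing. Since $\vec v$ is constant in $x$, one has $\bigl(\int_Q|U^{1/q}(x)\vec v|^q dx\bigr)^{1/q}\approx|Q|^{1/q}|\Uu_Q^q\vec v|$, so the single-cube hypothesis becomes
\[
|Q|^{\frac{\alpha}{d}+\frac{1}{q}-1}\Bigl|\Uu_Q^q\!\int_Q V^{-\frac{1}{p}}(y)\vec h(y)\,dy\Bigr|\lesssim C\,\|\vec h\|_{L^p(Q)}.
\]
Now fix a unit vector $e$, lower-bound $|\Uu_Q^q\vec v|\ge\bigl|\int_Q\langle V^{-1/p}(y)\Uu_Q^q e,\vec h(y)\rangle\,dy\bigr|$, and \emph{then} take the supremum over $\|\vec h\|_{L^p}=1$; this legitimately yields $|Q|^{1/p'}(\avgint_Q|V^{-1/p}(y)\Uu_Q^q e|^{p'}dy)^{1/p'}\approx|Q|^{1/p'}|\Vv_Q^{p'}\Uu_Q^q e|$, and optimizing over $e$ gives $|Q|^{\alpha/d+1/q-1/p}|\Uu_Q^q\Vv_Q^{p'}|_\op\lesssim C$. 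Your reducing-operator instinct is right, but it must collapse the $x$-integral rather than the $\vec w$-supremum.
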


\begin{remark}
  In the one weight, scalar case when $p=q$ Theorem~\ref{theorem:avg-op} was
  implicit in Jawerth~\cite{jawerth86}; for the general result in the
  scalar case, see Berezhno{\u\i}~\cite{MR1622773}.  In the one weight
  matrix case, again when $p=q$, Theorem~\ref{theorem:avg-op} was
  proved in~\cite{CRM}.
\end{remark}

\begin{remark}
As a corollary to Theorem~\ref{theorem:avg-op} we prove two weight
estimates for convolution operators and approximations of the
identity, generalizing one weight results from~\cite{CRM}.  See
Corollary~\ref{cor:convolution-op} below.
\end{remark}
\medskip

Our second result is a weak type inequality for a two weight variant
of the so-called auxiliary maximal operator introduced
in~\cite{CG,G}.  Given $0\leq \alpha <d$ and matrix weights $U$ and
$V$, define
\begin{equation}  \label{eqn:two-wt-auxiliary}
M_{\alpha, U, V}' f(x) = \sup_{Q} {|Q| ^{ \frac{\alpha}{d}}}
\avgint_Q |\MC{U}_Q ^q V^{-\frac{1}{p}} (y) {f}(y)| \, dy \cdot \chi_Q(x),
\end{equation}
where $\Uu_Q^q$ is the reducing operator associated with the matrix
$U$. (For a precise definition, see Section~\ref{section:prelim}
below.)  Given any cube $Q$, the associated averaging operator is
\[ B_Q^\alpha f(x) =
{|Q| ^{ \frac{\alpha}{d}}}
\avgint_Q |\MC{U}_Q ^q V^{-\frac{1}{p}} (y) {f}(y)| \, dy \cdot
\chi_Q(x). \]

\begin{theorem} \label{thm:mainweak}
  Given $0\leq \alpha<d$, $1\leq p\leq q<\infty$ such that
  $\frac{1}{p}-\frac{1}{q}\leq \frac{\alpha}{d}$, and a pair of matrix
  weights $(U,V)$, the following are equivalent:
\begin{enumerate}
\item $ \ (U, V) \in A_{p, q}^\alpha$;
\item $\ M_{\alpha, U, V} ' : L^p \rightarrow L^{q, \infty}$;
\item For every cube $Q$, $B_Q^\alpha : L^p \rightarrow L^{q,\infty}$ with norm
  independent of $Q$.
\end{enumerate}
\end{theorem}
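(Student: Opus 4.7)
We establish the cycle (2) $\Rightarrow$ (3) $\Rightarrow$ (1) $\Rightarrow$ (2). The implication (2) $\Rightarrow$ (3) is immediate because $B_Q^\alpha f(x) \le M'_{\alpha,U,V} f(x)$ pointwise for every cube $Q$, so the uniform weak-type norm of $B_Q^\alpha$ is controlled by that of $M'_{\alpha,U,V}$.

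For (3) $\Rightarrow$ (1), fix a cube $Q$. Since $B_Q^\alpha f$ is a nonnegative constant multiple of $\chi_Q$, its $L^{q,\infty}$ and $L^q$ norms coincide up to an absolute constant. The standard duality argument (the operator-norm bound on the integrand followed by H\"older's inequality, with near-equality realized by an appropriate vector-valued test function) gives
\[
\|B_Q^\alpha\|_{L^p \to L^{q,\infty}} \approx |Q|^{\frac{\alpha}{d}+\frac{1}{q}-\frac{1}{p}} \left(\avgint_Q \OP{\Uu_Q^q V(y)^{-\frac{1}{p}}}^{p'}\,dy\right)^{\frac{1}{p'}},
\]
with $\esssup_{y\in Q}$ replacing the $L^{p'}$ average when $p=1$. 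Uniform boundedness in $Q$ of this right-hand side is equivalent to $(U,V)\in A_{p,q}^\alpha$: the defining property of $\Uu_Q^q$, combined with a finite-dimensional exchange of $\sup_{|e|=1}$ with an $L^q$-average, shows that both~\eqref{eqn:matrix-Apq} (or~\eqref{eqn:matrixA1q} when $p=1$) and the displayed quantity are comparable, up to constants depending only on $n$, to $\sup_Q |Q|^{\frac{\alpha}{d}+\frac{1}{q}-\frac{1}{p}} \OP{\Uu_Q^q \Vv_Q}$, where $\Vv_Q$ is the reducing operator for $V^{-\frac{1}{p}}$ on $L^{p'}(Q)$.

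For (1) $\Rightarrow$ (2), the strategy is to pointwise dominate $M'_{\alpha,U,V}$ by a classical scalar fractional maximal operator. Applying the operator-norm inequality $|\Uu_Q^q V(y)^{-\frac{1}{p}} f(y)| \le \OP{\Uu_Q^q V(y)^{-\frac{1}{p}}}|f(y)|$, H\"older's inequality with exponents $p,p'$, and the reducing-operator form of the matrix $A_{p,q}^\alpha$ condition from the previous paragraph, we obtain
\[
B_Q^\alpha f(x) \le C\,[U,V]_{A_{p,q}^\alpha}\, |Q|^{\frac{1}{p}-\frac{1}{q}} \left(\avgint_Q |f|^p\right)^{\frac{1}{p}}\chi_Q(x).
\]
Taking the supremum over cubes $Q\ni x$ shows that $M'_{\alpha,U,V} f(x) \le C\,[U,V]_{A_{p,q}^\alpha}\,\bigl(M_{\beta p}(|f|^p)(x)\bigr)^{1/p}$ with $\beta p/d = 1-p/q$, and the classical weak-type $(1,q/p)$ bound for the scalar fractional maximal operator $M_{\beta p}$ (which reduces to Hardy--Littlewood when $p=q$) immediately yields $M'_{\alpha,U,V}: L^p \to L^{q,\infty}$. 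The endpoint $p=1$ is handled identically, using~\eqref{eqn:matrixA1q} and the essential supremum in place of the $L^{p'}$ average.

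\textbf{Main obstacle.} The only genuinely non-routine ingredient is the equivalence between the matrix $A_{p,q}^\alpha$ condition as written in~\eqref{eqn:matrix-Apq} (or~\eqref{eqn:matrixA1q}) and its reducing-operator reformulation in terms of $\OP{\Uu_Q^q V^{-\frac{1}{p}}}$. This rests on the defining property of $\Uu_Q^q$ together with a standard finite-dimensional compactness argument that exchanges $\sup_{|e|=1}$ with $L^q$-integration over $Q$, cf.~\cite{G, CG, Isralowitz:2016we}; we expect this to be recorded in Section~\ref{section:prelim}. Once it is in hand, all three implications reduce to elementary single-cube computations plus the classical weak-type bound for the scalar fractional maximal operator.
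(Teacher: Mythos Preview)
Your proof is correct. The implications $(2)\Rightarrow(3)$ and $(3)\Rightarrow(1)$ proceed essentially as in the paper: the first is pointwise domination, and the second is a duality computation identifying $\|B_Q^\alpha\|_{L^p\to L^{q,\infty}}$ with the reducing-operator expression $|Q|^{\frac{\alpha}{d}+\frac{1}{q}-\frac{1}{p}}\OP{\Uu_Q^q\Vv_Q^{p'}}$ (the equivalence you flag as the ``main obstacle'' is exactly Proposition~\ref{prop:reducing-norm} and~\eqref{eqn:Apq-alt}, \eqref{eqn:A1q-alt} in Section~\ref{section:prelim}).

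Your argument for $(1)\Rightarrow(2)$, however, is genuinely different from the paper's and somewhat cleaner. The paper first reduces to a dyadic version $M'_{\alpha,U,V,\D}$ via Proposition~\ref{dyadic}, then runs a Calder\'on--Zygmund stopping-time argument: it selects maximal dyadic cubes on which the average exceeds $\lambda$, notes that their union is the superlevel set, and bounds the total measure by summing H\"older estimates cube-by-cube. You instead apply H\"older once inside each average to get the uniform pointwise bound
\[
M'_{\alpha,U,V}f(x)\lesssim [U,V]_{A_{p,q}^\alpha}\bigl(M_{\gamma}(|f|^p)(x)\bigr)^{1/p},\qquad \tfrac{\gamma}{d}=1-\tfrac{p}{q},
\]
and then invoke the classical weak $(1,q/p)$ inequality for the scalar fractional maximal operator $M_\gamma$. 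This avoids the dyadic reduction and the stopping-time construction entirely, at the cost of importing one standard scalar result. The paper's route is self-contained but longer; yours is shorter and makes the reduction to the scalar theory transparent.
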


\begin{remark}
It is very tempting to conjecture that Theorem~\ref{thm:mainweak}
remains true with the auxiliary maximal operator replaced by
$M_{\alpha, U,V}$, but we have been unable to prove this.  We can
prove~\ref{thm:mainweak} because the
auxiliary maximal operator is much easier to work with when
considering weak type inequalities.
\end{remark}

\medskip

Finally, as a corollary to Theorem~\ref{thm:mainint} we prove a
``mixed'' Poincar\'e inequality involving both scalar and matrix
weights.

\begin{theorem} \label{thm:poincare}
Given $1<p \leq q< \infty$ such that
$\frac{1}{p}-\frac{1}{q}\leq \frac{1}{d}$, suppose that $\Phi$ and
$\Psi$ are Young functions with $\bar{\Phi}\in B_{p,q}$ and
$\bar{\Psi} \in B_{q'}$.  If $u$ is a scalar weight and $V$ is a
matrix weight such that
\begin{equation} \label{eqn:poincare1}
 \sup_Q |Q|^{\frac{1}{d}+\frac{1}{q}-\frac{1}{p}}
\|u^{\frac{1}{q}}\|_{\Psi,Q} \|V^{-\frac{1}{p}}\|_{\Phi,Q} < \infty,
\end{equation}
then given any open convex set $E\subset \R^d$ with $u(E)<\infty$, and
any scalar function $f\in C^1(E)$,
\begin{equation} \label{eqn:poincare2}
 \left(\int_E |f(x)-f_{E,u}|^q u(x)\,dx\right)^{\frac{1}{q}}
\lesssim \left(\int_E |V^{\frac{1}{p}}(x) \nabla
    f(x)|^p\,dx\right)^{\frac{1}{p}},
\end{equation}
where $f_{E,u} = u(E)^{-1}\int_E f(x) u(x)\,dx$.  The implicit
constant is independent of $E$.
\end{theorem}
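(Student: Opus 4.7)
My plan is to deduce Theorem \ref{thm:poincare} by combining Theorem \ref{thm:mainint} with a classical Sobolev-type pointwise representation on convex sets. The first step is to establish such a representation: starting from
$$f(x)-f_{E,u}=\frac{1}{u(E)}\int_E(f(x)-f(y))u(y)\,dy$$
and using the fundamental theorem of calculus along the segment from $x$ to $y$ (which lies in $E$ by convexity), i.e.\ $f(x)-f(y)=-\int_0^1\nabla f(x+t(y-x))\cdot(y-x)\,dt$, the substitution $z=x+t(y-x)$ rewrites this as
$$f(x)-f_{E,u}=-\int_E K(x,z)\cdot\nabla f(z)\,dz,\qquad x\in E,$$
for a vector kernel $K$ that I expect to satisfy $|K(x,z)|\lesssim|x-z|^{-(d-1)}\chi_E(z)$ with constant depending only on $d$. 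Taking absolute values then gives the pointwise estimate
$$|f(x)-f_{E,u}|\chi_E(x)\lesssim I_1(|\nabla f|\chi_E)(x).$$

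Given this representation, the second step is to invoke the two-weight bound for $I_1$. Setting $g=|\nabla f|\chi_E$, it suffices to show $\|I_1 g\|_{L^q(u)}\lesssim\|V^{1/p}\nabla f\|_{L^p}$. The bound $g(y)\leq|V(y)^{-1/p}|_{\op}\cdot|V(y)^{1/p}\nabla f(y)|\chi_E(y)$ reduces this to a scalar two-weight inequality for $I_1$ with scalar weights $u$ (on the left) and $|V^{-1/p}|_{\op}^p$ (as the inverse of the right weight). The corresponding scalar Orlicz bump condition is exactly \eqref{eqn:poincare1}: the identity $\big((|V^{-1/p}|_{\op}^p)^{-1}\big)^{-1/p}=|V^{-1/p}|_{\op}$ makes this immediate. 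With $\bar{\Phi}\in B_{p,q}$ and $\bar{\Psi}\in B_{q'}\subset B_{q',p'}$ (using $q'\leq p'$ since $p\leq q$), the scalar case of Theorem \ref{thm:mainint} applies and yields \eqref{eqn:poincare2}.

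The delicate step, which I expect to be the main obstacle, is securing the universal kernel estimate $|K(x,z)|\lesssim|x-z|^{-(d-1)}$ in the first step. The change of variables in fact produces $|K(x,z)|=u(E)^{-1}U_x(z)|x-z|^{-(d-1)}$, where
$$U_x(z)=\int_{|x-z|}^{s^{*}(\omega)}u(x+s\omega)s^{d-1}\,ds$$
is a one-dimensional $u$-integral along the ray from $x$ through $z$ out to $\partial E$ (with $\omega=(z-x)/|z-x|$ and $s^{*}(\omega)$ the distance from $x$ to $\partial E$ in direction $\omega$). Proving $U_x(z)\lesssim u(E)$ with a constant depending only on $d$ — which is what yields the $E$-independent Poincar\'e constant claimed — requires exploiting both the convexity of $E$ and the specific normalization inherent to the weighted mean $f_{E,u}$; the polar identity $\int_E U_x(z)|x-z|^{-(d-1)}\,dz=\int_E u(z)|z-x|\,dz$ quantifies the averaged content of $U_x(z)/u(E)$ and is the key Fubini step in closing the estimate.
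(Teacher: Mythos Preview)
Your Step 2 is correct and actually cleaner than the paper's argument: once you have the pointwise bound $|f(x)-f_E|\lesssim I_1(|\nabla f|\chi_E)(x)$, the reduction to a \emph{scalar} two-weight inequality for $I_1$ via $|\nabla f|\le |V^{-1/p}|_{\op}|V^{1/p}\nabla f|$ is perfectly valid, and the bump condition~\eqref{eqn:poincare1} is exactly what the scalar $n=1$ case of Theorem~\ref{thm:mainint} requires. The paper instead re-runs the sparse/reducing-operator machinery with $U=uI_d$, which amounts to the same thing.

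Step 1, however, has a genuine gap. The pointwise estimate
\[
|f(x)-f_{E,u}|\;\lesssim\; I_1(|\nabla f|\chi_E)(x)
\]
with a constant independent of $E$ and $u$ is \emph{false} for general weights. Your own computation gives $|K(x,z)|=u(E)^{-1}U_x(z)|x-z|^{-(d-1)}$, and the needed bound $U_x(z)\lesssim u(E)$ cannot hold: $U_x(z)$ is a one-dimensional integral along a single ray, while $u(E)$ is a full $d$-dimensional integral. Take $E$ the unit ball centered at $x=0$ and $u=\epsilon^{-d}\chi_{B_\epsilon(p)}$ for some interior point $p$; then $u(E)\approx 1$ but for $z$ on the segment from $0$ toward $p$ one has $U_x(z)\approx \epsilon^{-(d-1)}\to\infty$. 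The polar identity you record is correct but only controls an \emph{average} of $U_x(z)$, not its pointwise size, so it cannot close the argument.

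The paper avoids this by first replacing $f_{E,u}$ with the \emph{unweighted} mean $f_E$: from $f_E-f_{E,u}=u(E)^{-1}\int_E(f_E-f)u$ and H\"older one gets
\[
\int_E |f-f_{E,u}|^q\,u\,dx \;\le\; 2\int_E |f-f_E|^q\,u\,dx,
\]
and then invokes the classical (unweighted) estimate $|f(x)-f_E|\lesssim I_1(\chi_E|\nabla f|)(x)$ on convex sets. The case $|E|=\infty$ (where $f_E$ is undefined) is handled by exhausting $E$ with $E\cap B_R(0)$ and applying Fatou. With this reduction in place, your Step 2 then finishes the proof.
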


\begin{remark}
Poincar\'e inequalities of this kind play a role in the study of
degenerate elliptic equations.  See, for
instance,~\cite{MR2906551,MR3369270,MR3388872,MR2204824,MR2574880}.
As an immediate consequence of Theorem~\ref{thm:poincare} we can use
the main result in~\cite{CruzUribe:2017wz} to prove the existence of
weak solutions to a Neumann boundary value problem for a degenerate
$p$-Laplacian.  See Corollary~\ref{cor:p-laplacian} below.
\end{remark}

\medskip

The remainder of this paper is organized as follows.  In
Section~\ref{section:prelim} we gather together some preliminary
results about the so called
reducing operators associated with matrix weights.  Reducing operators
play a major role in all of our proofs.

In Section~\ref{section:proof-mainmax} we prove
Theorem~\ref{thm:mainmax} and Proposition~\ref{prop:q-upper-bound}
In Section~\ref{section:proof-mainint} we prove
Theorem~\ref{thm:mainint}.  In our proofs of these two theorems we make extensive use of the
theory of dyadic approximations for fractional maximal and integral
operators; for the scalar theory, see~\cite{CruzUribe:2016ji}.

In Section~\ref{section:CZO} we prove Theorem~\ref{thm:CZO} and
Corollary~\ref{SharpSparseCor}.  In our proof we use the recent result
of Nazarov, {\em et al.}~\cite{Nazarov:2017ufa}, who extended dyadic approximation
theory for singular integrals to the matrix setting, and showed that
to prove matrix weighted estimates for Calder\'on-Zygmund
  operators it is enough to prove them for sparse operators.

In Section~\ref{section:characterization}  we prove
Theorems~\ref{theorem:avg-op} and~\ref{thm:mainweak}, and prove
Corollary~\ref{cor:convolution-op} about convolution operators.
Finally, in Section~\ref{section:poincare} we prove
Theorem~\ref{thm:poincare}, and prove Corollary~\ref{cor:p-laplacian} giving
weak solutions to a degenerate $p$-Laplacian.

Throughout this paper notation is standard or will be defined as
needed.  If we write $X\lesssim Y$, we mean that $X\leq cY$, where the
constant $c$ can depend on the dimension $d$ of the underlying space
$\R^d$,  the dimension $n$ of our vector functions, the exponents $p$
and $q$
in the weighted Lebesgue spaces, and the
underlying fractional maximal or integral operators (i.e., on
$\alpha$) or on the underlying Calder\'on-Zygmund
  operator.  The dependence on the matrix weights will always be made
  explicit.  If we write $X\approx Y$, then $X\lesssim Y$ and
  $Y\lesssim X$.

\section{Reducing operators}
\label{section:prelim}

Given a matrix weight $A$, a Young function $\Psi$, and a cube
$Q$, we can define a norm on $\C^n$ by
$\|Ae\|_{\Psi,Q}$, $e \in \C^n$.
The following lemma yields a very important tool in the study of
matrix weights, the so-called reducing operator, which lets us replace
this norm by a norm induced by a constant positive matrix.    The following result
was proved by Goldberg~\cite[Proposition~1.2]{G}.

\begin{lemma} \label{lemma:reducing-vec}
Given a matrix weight $A$, a Young function $\Psi$, and a cube
$Q$, there exists a (constant positive) matrix $\A_Q^\Psi$, called a
reducing operator of $A$, such that for
all $e\in \C^n$,
\[ |\A_Q^\Psi e| \approx \|Ae\|_{\Psi,Q}, \]
where the implicit constants depend only on $d$.
\end{lemma}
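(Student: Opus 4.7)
The plan is to realize the finite-dimensional norm $e \mapsto \|Ae\|_{\Psi,Q}$ on $\mathbb{C}^n$ as being equivalent to a Euclidean norm by invoking John's ellipsoid theorem, and to read off the reducing operator $\A_Q^\Psi$ from that ellipsoid. The equivalence constants will then depend only on the (real) dimension of the ambient vector space.

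First I would define the functional $\rho : \mathbb{C}^n \to [0,\infty]$ by
\[ \rho(e) = \|Ae\|_{\Psi,Q}, \qquad e\in\mathbb{C}^n, \]
and check that $\rho$ is a norm on $\mathbb{C}^n$. Positive homogeneity $\rho(\lambda e)=|\lambda|\rho(e)$ and the triangle inequality are immediate from the analogous properties of the Luxemburg functional $\|\cdot\|_{\Phi,Q}$ applied to the scalar functions $|Ae|$; subadditivity uses that $|A(e+f)|\le|Ae|+|Af|$ pointwise together with monotonicity of the Orlicz norm. Nondegeneracy $\rho(e)=0\Rightarrow e=0$ uses a.e.\ positive definiteness of $A(x)$: if $Ae=0$ on a set of positive measure in $Q$, then $e=0$. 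Finiteness $\rho(e)<\infty$ is part of the standing hypothesis (it is exactly what the bump conditions \eqref{matrixbump1}--\eqref{matrixbump3} presuppose locally).

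Next I would apply John's theorem. The unit ball $K=\{e\in\mathbb{C}^n:\rho(e)\le 1\}$ is a centrally symmetric, closed convex body in $\mathbb{C}^n\cong\mathbb{R}^{2n}$. John's theorem produces a centered ellipsoid $\mathcal{E}$ with $\mathcal{E}\subseteq K\subseteq c_n\,\mathcal{E}$, where $c_n$ depends only on $n$. Moreover $\rho$ is invariant under the unitary action $e\mapsto e^{i\theta}e$ (since $|A(e^{i\theta}e)|=|Ae|$ pointwise), so $K$ has a full $U(1)$-symmetry, and by uniqueness of the John ellipsoid that ellipsoid is $U(1)$-invariant as well. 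Consequently it arises from a Hermitian inner product on $\mathbb{C}^n$, i.e.\ there is a unique positive definite self-adjoint complex matrix $\mathcal{A}$ with
\[ \mathcal{E} = \{ e\in\mathbb{C}^n : |\mathcal{A}e|\le 1 \}. \]
I would set $\mathcal{A}_Q^\Psi := \mathcal{A}$.

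Finally, the sandwich $\mathcal{E}\subseteq K\subseteq c_n\,\mathcal{E}$ translates, by positive homogeneity, into
\[ c_n^{-1}\,|\mathcal{A}_Q^\Psi e|\ \le\ \rho(e)\ \le\ |\mathcal{A}_Q^\Psi e| \qquad\text{for all } e\in\mathbb{C}^n, \]
which is exactly the claimed equivalence $|\mathcal{A}_Q^\Psi e|\approx\|Ae\|_{\Psi,Q}$ with constants depending only on the dimension. The only real obstacle in this plan is the complex-linearity of the reducing operator: one has to observe that the $U(1)$-invariance of $\rho$ forces the John ellipsoid to be complex-linear and hence gives a genuine Hermitian $\mathcal{A}$ rather than merely a real symmetric $2n\times 2n$ matrix. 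Everything else (normedness of $\rho$, translation of set containments into norm inequalities) is routine.
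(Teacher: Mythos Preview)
Your argument via John's ellipsoid theorem is correct and is precisely the standard construction; the paper does not supply its own proof but cites Goldberg \cite[Proposition~1.2]{G}, where this same approach is carried out. Your observation that the $U(1)$-invariance of $\rho$ forces the John ellipsoid to be invariant under complex scalars, and hence to come from a genuine Hermitian positive definite matrix rather than merely a real symmetric $2n\times 2n$ form, is exactly the point needed to produce a complex-linear reducing operator. One minor remark: the equivalence constants from John's theorem depend on the matrix dimension $n$ (indeed one gets $c_n=\sqrt{n}$ in the complex case), not on the spatial dimension $d$ as the paper's statement of the lemma says; your version is the correct one.
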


As a consequence of Lemma~\ref{lemma:reducing-vec}, we get the
following result for the norms of reducing operators.  These estimates
are implicit in the literature, at least for $L^p$ norms; we prove them  for the
convenience of the reader.

\begin{proposition} \label{prop:reducing-norm}
Given matrix weights $A$ and $B$,  Young functions $\Phi$ and $\Psi$, a cube
$Q$, and reducing operators $\A_Q^\Psi$ and $\B_Q^\Phi$, then for all $e\in
\C^n$,
\begin{equation} \label{eqn:reducing-norm1}
|\A_Q^\Psi|_\op \approx \|A\|_{\Psi,Q},
\end{equation}
\begin{equation} \label{eqn:reducing-norm2}
|\A_Q^\Psi\B_Q^\Phi|_\op \approx \|A(x) \B_Q^\Phi \|_{\Psi_x,Q}
\approx \Big\| \|A(x)B(y)\|_{\Phi_y} \Big\|_{\Psi_x,Q}.
\end{equation}
In both cases the implicit constants depend only on $d$.
\end{proposition}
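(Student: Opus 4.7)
The plan is to prove a single auxiliary identity from which both assertions follow. Specifically, I will establish that for any matrix weight $A$, any Young function $\Psi$, any cube $Q$, and any constant $n\times n$ matrix $M$,
\[
|\A_Q^\Psi M|_{\op} \approx \|A(x) M\|_{\Psi_x, Q},
\]
with constants depending only on dimension. Taking $M = I$ recovers \eqref{eqn:reducing-norm1}, while taking $M = \B_Q^\Phi$ recovers the first equivalence in \eqref{eqn:reducing-norm2}.

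To prove this identity I combine Lemma~\ref{lemma:reducing-vec} with the elementary dimensional bound $\max_i |N e_i| \leq |N|_{\op} \leq \sum_i |N e_i|$, valid for any $n\times n$ matrix $N$ and the standard basis $\{e_i\}$ of $\C^n$. For the upper bound $\lesssim$, fix $|e|=1$ and apply Lemma~\ref{lemma:reducing-vec} to $A$ with the constant vector $Me$ to obtain $|\A_Q^\Psi M e| \approx \|A(x) Me\|_{\Psi_x, Q} \leq \|A(x) M\|_{\Psi_x, Q}$; taking the supremum over $|e|=1$ yields $|\A_Q^\Psi M|_\op \lesssim \|A(x) M\|_{\Psi_x, Q}$. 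For the lower bound $\gtrsim$, apply the basis bound pointwise in $x$ to $A(x) M$ and pass to the Orlicz norm:
\[
\|A(x) M\|_{\Psi_x, Q} \leq \sum_i \|A(x) M e_i\|_{\Psi_x, Q} \approx \sum_i |\A_Q^\Psi M e_i| \leq n\, |\A_Q^\Psi M|_{\op},
\]
using Lemma~\ref{lemma:reducing-vec} in the middle step.

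The second equivalence in \eqref{eqn:reducing-norm2} is the main subtlety, and the plan is to obtain it by reapplying the auxiliary identity with the roles of $A$ and $B$ interchanged. For each fixed $x$, treat $A(x)$ as a constant matrix and apply the identity to the matrix weight $B$ (in variable $y$) with Young function $\Phi$:
\[
|\B_Q^\Phi A(x)|_{\op} \approx \|B(y) A(x)\|_{\Phi_y, Q},
\]
with constants uniform in $x$ because they depend only on dimension. Since matrix weights and reducing operators are self-adjoint and $|N|_{\op} = |N^*|_{\op}$ for any $N$, the left side equals $|A(x) \B_Q^\Phi|_{\op}$ and the right side equals $\|A(x) B(y)\|_{\Phi_y, Q}$. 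Taking $\|\cdot\|_{\Psi_x, Q}$ of both sides preserves this pointwise equivalence, since $f \leq Cg$ pointwise implies $\|f\|_{\Psi, Q} \leq C\|g\|_{\Psi, Q}$ by monotonicity and homogeneity of the Luxemburg norm, yielding the conclusion.

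The main obstacle is recognizing that the naive pointwise-in-$e$ statement $|A(x) \B_Q^\Phi e| \approx \|A(x) B(y) e\|_{\Phi_y, Q}$ is generally false: simple two-valued examples in the $L^2$ setting show that the left side can vanish on a particular direction while the right side is strictly positive. It is crucial that the operator norm be taken first, which is precisely what permits reapplying the auxiliary identity with $A(x)$ cast in the role of the constant matrix $M$.
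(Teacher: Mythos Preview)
Your proof is correct and follows essentially the same route as the paper's. The paper also proves the auxiliary identity $|\A_Q^\Psi M|_\op \approx \|A(x)M\|_{\Psi_x,Q}$ for a constant matrix $M$ (this is noted explicitly in the remark following the proposition), obtains it by the same basis bound $|N|_\op \approx \sum_j |Ne_j|$ combined with Lemma~\ref{lemma:reducing-vec}, and then uses the self-adjointness identity $|PQ|_\op = |QP|_\op$ to swap $A(x)$ into the constant-matrix slot for the second equivalence. The only organizational difference is that you isolate the auxiliary identity first and apply it twice, whereas the paper runs the basis-expansion argument inline as a single chain of $\approx$'s; the ingredients and logic are identical.
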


\begin{remark}
As will be clear from the proof, the first estimate
in~\eqref{eqn:reducing-norm2} is true if $\B_Q^\Phi$ is replaced with
any constant matrix.
\end{remark}

\begin{proof}
To prove \eqref{eqn:reducing-norm1} fix an orthonormal basis
$\{e_j\}_{j=1}^n$ of $\C^n$.  Then by the definition of the operator norm and of
reducing operators,
\[ |\A_Q^\Psi|_\op \approx \sum_{j=1}^n|\A_Q^\Phi e_j|
\approx \sum_{j=1}^n\|Ae_j\|_{\Phi,Q}
\approx \|A\|_{\Phi,Q}.  \]

\medskip

The proof of~\eqref{eqn:reducing-norm2} is similar, but we exploit the
fact that while matrix products of self-adjoint matrices do not
commute, they have the same operator norm:
\begin{align*}
|\A_Q^\Psi \B_Q^\Phi|_\op
& \approx \sum_{j=1}^n |\A_Q^\Psi \B_Q^\Phi e_j| \\
& \approx \sum_{j=1}^n \|A(x) \B_Q^\Phi e_j\|_{\Psi_x,Q} \\
& \approx \|A(x) \B_Q^\Phi \|_{\Psi_x,Q} \\
& = \| \B_Q^\Phi A(x) \|_{\Psi_x,Q} \\
& \approx \sum_{j=1}^n \| \B_Q^\Phi A(x) e_j\|_{\Psi_x,Q} \\
& \approx \sum_{j=1}^n \Big\| \|B(y)A(x) e_j\|_{\Phi_y,Q}
\Big\|_{\Psi_x,Q} \\
& \approx \Big\| \|B(y)A(x) \|_{\Phi_y,Q} \Big\|_{\Psi_x,Q} \\
& =  \Big\| \|A(x)B(y) \|_{\Phi_y,Q} \Big\|_{\Psi_x,Q}.
\end{align*}
\end{proof}

As a consequence of Proposition~\ref{prop:reducing-norm} we can
restate all of the weight conditions in our theorems in terms of
reducing operators.  Given matrix weights $U$ and $V$,  Young
functions  $\Psi$ and $\Phi$, and
$1 \leq p\leq q<\infty$, let $\Uu_Q^{q,\Psi}$ and $\Vv_Q^{p,\Phi}$ be
the reducing operators

\[ |\Uu_Q^{q,\Psi}e| \approx \|U^{\frac{1}{q}} e\|_{\Psi,Q},
| \Vv_Q^{p,\Phi}e| \approx \|V^{-\frac{1}{p}} e \|_{\Phi,Q}.
 \]
If $\Psi(t)=t^q$ or $\Phi(t)=t^{p'}$ then we will write $\Uu_Q^{q}$,
$\Vv_Q^{p,p'}$ (or more simply, $\Uu_Q^q$, $\Vv_Q^{p'}$).

With this definition, we have the following equivalences: in
Theorem~\ref{thm:mainmax},
\begin{equation} \label{eqn:mainmax-alt}
[U,V]_{p,q,\Phi} \approx
\sup_Q|Q|^{\frac{\alpha}{d}+\frac{1}{q}-\frac{1}{p}}
|\Uu_Q^{q,q}\Vv_Q^{p,\Phi}|_\op;
\end{equation}
in
Theorem~\ref{thm:mainint},
\begin{equation} \label{eqn:mainint-alt}
[U,V]_{p,q,\Phi,\Psi} \approx
\sup_Q|Q|^{\frac{\alpha}{d}+\frac{1}{q}-\frac{1}{p}}
|\Uu_Q^{q,\Psi}\Vv_Q^{p,\Phi}|_\op;
\end{equation}
in
Theorem~\ref{thm:CZO},
\begin{equation} \label{eqn:CZO-alt}
[U,V]_{p,\Phi,\Psi} \approx \sup_Q |\Uu_Q^{p,\Psi}\Vv_Q^{p,\Phi}|_\op.
\end{equation}
When $p>1$  we can restate the two weight $A_{p,q}$
condition~\eqref{eqn:matrix-Apq} as
\begin{equation} \label{eqn:Apq-alt}
[U,V]_{A_{p,q}^\alpha} \approx
\sup_Q|Q|^{\frac{\alpha}{d}+\frac{1}{q}-\frac{1}{p}}
|\Uu_Q^{q}\Vv_Q^{p'}|_\op,
\end{equation}
and when $p=1$ by
\begin{equation} \label{eqn:A1q-alt}
 [U,V]_{A_{1,q}^\alpha} \approx
 \sup_Q|Q|^{\frac{\alpha}{d}+\frac{1}{q}-1}
\esssup_{y\in Q}|\Uu_Q^{q}V^{-1}(y)|_\op.
\end{equation}
Finally, we will need the following lemma in the proof of
Corollary~\ref{SharpSparseCor}.  It is a quantitative version of a
result proved in Roudenko~\cite[Corollary~3.3]{MR1928089}.  It follows
at once if we use \eqref{eqn:Apq-alt} to restate the definitions of
one weight matrix $A_p$ and $A_{p'}$ from~\eqref{eqn:one-wt-Ap}.

\begin{lemma} \label{lemma:matrix-dual}
Given $1<p<\infty$ and a matrix weight $W$, if $W\in A_p$, then
$W^{-\frac{p'}{p}} \in A_{p'}$ and
\[ [W]_{A_p}^{\frac{1}{p}} \approx
[ W^{-\frac{p'}{p}}]_{A_{p'}}^{\frac{1}{p'}}. \]
\end{lemma}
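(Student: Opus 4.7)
The strategy is to translate both sides of the desired comparability into the reducing-operator language of Proposition~\ref{prop:reducing-norm}, where the statement reduces to the trivial identity $|AB|_\op = |BA|_\op$ for self-adjoint $A,B$.

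First, I would apply \eqref{eqn:Apq-alt} with $\alpha=0$, $q=p$, and $U=V=W$. Comparing \eqref{eqn:matrix-Apq} with the classical definition \eqref{eqn:one-wt-Ap} (they differ only by an outer $p$-th power), this yields
\[ [W]_{A_p}^{\frac{1}{p}} \approx \sup_Q \big|\mathcal{W}_Q^p \,\mathcal{W}_Q^{p'}\big|_\op, \]
where $\mathcal{W}_Q^p$ and $\mathcal{W}_Q^{p'}$ are the reducing operators satisfying $|\mathcal{W}_Q^p e| \approx \|W^{\frac{1}{p}} e\|_{p,Q}$ and $|\mathcal{W}_Q^{p'} e| \approx \|W^{-\frac{1}{p}} e\|_{p',Q}$ for all $e\in\mathbb{C}^n$.

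Next, I would run the same procedure with $W$ replaced by the matrix weight $W^{-p'/p}$ and $p$ replaced by $p'$, using $(p')'=p$. The two relevant reducing operators are those associated with $(W^{-p'/p})^{1/p'} = W^{-1/p}$ at exponent $p'$ and with $(W^{-p'/p})^{-1/p'} = W^{1/p}$ at exponent $p$, which are exactly $\mathcal{W}_Q^{p'}$ and $\mathcal{W}_Q^p$. Therefore
\[ [W^{-p'/p}]_{A_{p'}}^{\frac{1}{p'}} \approx \sup_Q \big|\mathcal{W}_Q^{p'}\, \mathcal{W}_Q^p\big|_\op. \]

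Finally, since $\mathcal{W}_Q^p$ and $\mathcal{W}_Q^{p'}$ are self-adjoint, $|\mathcal{W}_Q^p\mathcal{W}_Q^{p'}|_\op = |\mathcal{W}_Q^{p'}\mathcal{W}_Q^p|_\op$: this is the same swap of order used in the proof of Proposition~\ref{prop:reducing-norm}, and follows from the fact that for any matrices $X,Y$ the products $XY$ and $YX$ have the same nonzero eigenvalues. Comparing the two displays then yields the claim. No real obstacle is anticipated; the only thing requiring care is the exponent bookkeeping in the substitution $W\mapsto W^{-p'/p}$, which verifies that the reducing operators are genuinely identical (up to the implicit constants of Lemma~\ref{lemma:reducing-vec}).
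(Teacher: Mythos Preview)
Your proposal is correct and matches the paper's approach exactly: the paper simply states that the lemma ``follows at once if we use \eqref{eqn:Apq-alt} to restate the definitions of one weight matrix $A_p$ and $A_{p'}$,'' which is precisely the reducing-operator translation you carry out. The swap $|\mathcal{W}_Q^p\mathcal{W}_Q^{p'}|_\op = |\mathcal{W}_Q^{p'}\mathcal{W}_Q^p|_\op$ is the same self-adjoint commutation observation already used in the proof of Proposition~\ref{prop:reducing-norm}.
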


\section{Proof of Theorem~\ref{thm:mainmax}}
\label{section:proof-mainmax}

We first prove that in Theorem \ref{thm:mainmax} we may assume without
loss of generality that $\frac{1}{p}-\frac{1}{q} \leq
\frac{\alpha}{d}$.

\begin{proposition} \label{prop:q-upper-bound}
Given $0<\alpha<d$, matrix weights $U$, $V$ and $1<p<q<\infty$ such that
  $\frac{1}{p} - \frac{1}{q} > \frac{\alpha}{d}$,
suppose that
  $M_{\alpha,U,V} : L^p\rightarrow L^q$.
If $V^{\frac{1}{p}}$ is
  locally integrable,  then $U(x)=0$ for almost every
  $x\in \Rd$.
\end{proposition}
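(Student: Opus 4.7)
The plan is to adapt Sawyer's scalar argument \cite{Sawyer:1982bt} to the matrix setting. Fix a unit vector $e \in \Cn$ and a cube $Q \subset \Rd$, and take as a test function $f = \chi_Q \cdot e \in L^p(\Rd,\Cn)$, so that $\|f\|_{L^p} = |Q|^{\frac{1}{p}}$. Since $U(x)^{\frac{1}{q}}$ is constant in $y$, the vector triangle inequality $\int_Q |A g(y)|\,dy \geq |A \int_Q g(y)\,dy|$ applied inside the defining supremum of $M_{\al,U,V}$ (with $Q$ itself as the test cube) gives, for $x \in Q$,
\[ M_{\al,U,V} f(x) \geq \frac{1}{|Q|^{1-\frac{\al}{d}}}\int_Q \big|U(x)^{\frac{1}{q}} V(y)^{-\frac{1}{p}} e\big|\,dy \geq |Q|^{\frac{\al}{d}}\, \big|U(x)^{\frac{1}{q}} \bar V_Q\, e\big|, \]
where $\bar V_Q := \avgint_Q V(y)^{-\frac{1}{p}}\,dy$. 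Raising to the $q$-th power, integrating over $Q$, and applying the boundedness hypothesis rearranges to
\[ |Q|^{\frac{\al}{d} + \frac{1}{q} - \frac{1}{p}} \Big(\avgint_Q \big|U(x)^{\frac{1}{q}} \bar V_Q\, e\big|^q\,dx\Big)^{\frac{1}{q}} \leq C. \]

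The exponent $\frac{\al}{d} + \frac{1}{q} - \frac{1}{p}$ is \emph{strictly negative} by hypothesis, so the prefactor blows up as $|Q|\to 0$. For almost every $x_0$, Lebesgue differentiation applied to the matrix $V^{-\frac{1}{p}}$ yields $\bar V_Q \to V(x_0)^{-\frac{1}{p}}$, and applied to the scalar functions $x \mapsto |U(x)^{\frac{1}{q}} v|^q$ (which are locally integrable since $|U(x)^{\frac{1}{q}} v|^q \leq \OP{U(x)}\,|v|^q$) yields $\avgint_Q |U(x)^{\frac{1}{q}} v|^q\,dx \to |U(x_0)^{\frac{1}{q}} v|^q$ for $v$ in a countable dense subset of $\Cn$. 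Combining these and approximating the relevant vector $v = V(x_0)^{-\frac{1}{p}} e$, the inner average converges to $|U(x_0)^{\frac{1}{q}} V(x_0)^{-\frac{1}{p}} e|^q$, which must therefore vanish for a.e.\ $x_0$. Since $e$ is arbitrary and $V(x_0)^{-\frac{1}{p}}$ is invertible (being positive definite), this forces $U(x_0)^{\frac{1}{q}} = 0$, and hence $U(x_0) = 0$.

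The main obstacle is that we are given only $V^{\frac{1}{p}} \in L^1_{\text{loc}}$, so $V^{-\frac{1}{p}}$ need not be locally integrable and $\bar V_Q$ may be meaningless. I would resolve this by a standard truncation: set $F_M := \{y\in\Rd : \OP{V(y)^{-\frac{1}{p}}} \leq M\}$ and replace $f$ by $f_M := \chi_{Q \cap F_M} \cdot e$. Since $V$ is positive definite a.e., $V^{-\frac{1}{p}}$ is a.e.\ finite, so $\bigcup_M F_M$ has full measure. The argument above then goes through with the bounded truncated average $\tilde V_{Q,M} := \avgint_{Q \cap F_M} V(y)^{-\frac{1}{p}}\,dy$ in place of $\bar V_Q$. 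At any density-one point $x_0$ of $F_M$ (a.e.\ $x_0 \in F_M$), $|Q\cap F_M|/|Q| \to 1$ and $\tilde V_{Q,M} \to V(x_0)^{-\frac{1}{p}}$, so the same endgame delivers $U(x_0) = 0$ for a.e.\ $x_0 \in F_M$. Taking $M \to \infty$ concludes the proof.
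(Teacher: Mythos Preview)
Your argument is correct, and the truncation on $\{|V^{-1/p}|_{\op} \leq M\}$ repairs the integrability issue you flagged. The paper's proof, however, sidesteps this detour by a different choice of test function: it takes $f(y) = V^{\frac{1}{p}}(y)e\,\chi_Q(y)$ rather than $\chi_Q e$. Then $V^{-\frac{1}{p}}(y)f(y) = e\,\chi_Q(y)$, so the integrand in $M_{\alpha,U,V}f$ collapses to $|U^{\frac{1}{q}}(x)e|$, which is constant in $y$. The boundedness hypothesis then gives directly
\[
\bigg(\avgint_Q |U^{\frac{1}{q}}(x)e|^q\,dx\bigg)^{\frac{1}{q}}
\lesssim |Q|^{\frac{1}{p}-\frac{1}{q}-\frac{\alpha}{d}}
\bigg(\avgint_Q |V^{\frac{1}{p}}(x)e|^p\,dx\bigg)^{\frac{1}{p}},
\]
and a single Lebesgue differentiation at a common Lebesgue point of the two \emph{fixed} scalar functions $|U^{\frac{1}{q}}e|^q$ and $|V^{\frac{1}{p}}e|^p$ finishes. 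This is where the hypothesis that $V^{\frac{1}{p}}$ be locally integrable enters the paper's argument, a hypothesis your approach never touches. Your route instead pays with a more delicate limit, since the vector $\tilde V_{Q,M}e$ inside your average varies with $Q$; the paper's trick of pre-multiplying the test function by $V^{\frac{1}{p}}$ eliminates that moving target and makes the endgame a one-liner.
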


\begin{proof}
  Fix $Q$ and a vector $e$, and let
  $f(y)=V^{\frac{1}{p}} (y) e\chi_Q(y)$.  Then for $x\in Q$,
\[ M_{\alpha,U,V}f(x)
\geq |Q|^{\frac{\alpha}{d}}\avgint_Q |U^{\frac{1}{q}}(x)e|\,dy
= |Q|^{\frac{\alpha}{d}}|U^{\frac{1}{q}}(x)e|. \]
Therefore,
\[ |Q|^{\frac{\alpha}{d}} \left(\int_Q|U^{\frac{1}{q}}(x)e|^q\,dx\right)^{\frac{1}{q}}
\leq \|M_{\alpha,U,V} f\|_{L^q}
\lesssim \|f\|_{L^p}
= \left(\int_Q |V^{\frac{1}{p}}(x)e|^p\,dx\right)^{\frac{1}{p}}, \]
which in turn implies that
\[ \left(\avgint_Q|U^{\frac{1}{q}}(x)e|^q\,dx\right)^{\frac{1}{q}}
\lesssim |Q|^{\frac{1}{p}-\frac{1}{q}-\frac{\alpha}{d}}
\left(\avgint_Q |V^{\frac{1}{p}}(x)e|^p\,dx\right)^{\frac{1}{p}}.  \]

Let $x_0$ be any Lebesgue point of the functions
$|U^{\frac{1}{q}}(x)e|^q$ and $|V^{\frac{1}{p}}(x)e|^p$ and let $Q_k$
be an sequence of cubes centered at $x_0$ that shrink to this point.
By the Lebesgue differentiation theorem, since
$\frac{1}{p}-\frac{1}{q}-\frac{\alpha}{d}>0$, the righthand side of
the above inequality tends to 0.  Therefore,
$|U^{\frac{1}{q}}(x_0)e|^q=0$.  Since this is true for every vector
$e$, we have that $|U(x_0)|_\op = |U^{\frac{1}{q}}(x_0)|_\op^q=0$.
Hence, $U(x_0)=0$.
\end{proof}

\medskip

To prove Theorem \ref{thm:mainmax} we will first reduce the problem to
the corresponding dyadic maximal operator.  We recall some facts from
the theory of dyadic operators.  We say that a collection
of cubes $\D$ in $\R^d$ is a dyadic grid if
\begin{enumerate}
\item if $Q\in \D$, then $\ell(Q)=2^k$ for some $k\in \Z$.

\item If $P,\,Q\in \D$, then $P\cap Q \in \{ P,Q,\emptyset\}$.

\item For every $k\in \Z$, the cubes $\D_k=\{ Q\in \D : \ell(Q)=2^k\}$
  form a partition of $\R^d$.
\end{enumerate}
We can approximate arbitrary cubes in $\R^d$ by cubes from a
finite collection of dyadic grids.  (For a proof,
see~\cite[Theorem~3.1]{CruzUribe:2016ji}.)

\begin{proposition}\label{dyadic}
For $t\in \{0,\pm\frac{1}{3}\}^d$ define the sets
 \[ \D^t = \{2^{-k} ([0, 1) ^d + m + t) : k \in \Z, m \in
  \Z^d\}.  \]
Then each $\D^t$ is a dyadic grid, and  given any cube $Q\subset
\R^d$,  there exists $t$ and $Q_t\in \D^t$ such that $Q\subset Q_t$ and
  $\ell(Q_t)\leq 3\ell(Q)$.
\end{proposition}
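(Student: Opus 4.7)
The proposition has two parts: showing each $\D^t$ is a dyadic grid, and establishing the covering statement with constant $3$.

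The first part consists of verifying the three defining properties of a dyadic grid in turn. Property (1), that sidelengths take values $\{2^k : k \in \Z\}$, is immediate from the definition. Property (3), the tiling of $\R^d$ at each scale $k$, follows because for any fixed $t$ the family $\{[0,1)^d + m + t : m \in \Z^d\}$ is a lattice tiling of $\R^d$, and this property is preserved under scaling by $2^{-k}$. Property (2), nesting across scales, is the substantive check: the specific values $t \in \{0, \pm 1/3\}^d$ are chosen so that every cube in $\D^t$ at scale $k$ decomposes as a union of $2^d$ cubes of $\D^t$ at scale $k+1$, which can be verified by direct computation.

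For the covering statement, the plan is to reduce to dimension one via the Cartesian product structure. Writing $Q = \prod_{i=1}^d Q_i$ with each $Q_i$ an interval of common length $\ell(Q)$, if for each coordinate $i$ I can find $t_i \in \{0, \pm 1/3\}$ and a one-dimensional cube $Q_{t,i} \in \D^{t_i}$ at a common scale with $Q_i \subset Q_{t,i}$ and $\ell(Q_{t,i}) \leq 3\ell(Q)$, then setting $t = (t_1, \ldots, t_d)$ and $Q_t = \prod_i Q_{t,i}$ produces the desired cube in $\D^t$.

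In one dimension, I would choose the dyadic scale $2^{-k}$ to be the smallest value satisfying $2^{-k} \geq 3\ell(Q)/2$, which automatically forces $\ell(Q_t) = 2^{-k} \leq 3\ell(Q)$. Setting $\beta = \ell(Q) \cdot 2^k \leq 2/3$, a direct computation with $Q = [a, a+\ell(Q))$ shows that the set of \emph{bad} shifts in $\R/\Z$ (those for which $Q$ straddles a grid boundary of the corresponding $\D^t$ at scale $k$) is an open arc of length $\beta$. Since the three candidate shifts $\{0, 1/3, -1/3\}$ are equispaced on the circle $\R/\Z$ with gaps of $1/3$, any open arc of length at most $2/3$ must omit at least one of them, so a good $t \in \{0, \pm 1/3\}$ exists. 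The main obstacle is this combinatorial step, where the two requirements on $k$ — the size bound $2^{-k} \leq 3\ell(Q)$ and the combinatorial threshold $2^{-k} \geq 3\ell(Q)/2$ — must be reconciled; getting the sharp constant $3$ relies on the open-versus-closed interval distinction at the endpoint $\beta = 2/3$ together with the equispacing of the three shifts.
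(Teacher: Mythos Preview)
The paper does not supply its own proof here; it simply refers the reader to \cite[Theorem~3.1]{CruzUribe:2016ji}. Your covering argument---the reduction to $d=1$, the choice of scale $2^{-k}\in[\tfrac32\ell(Q),\,3\ell(Q))$, and the open-arc pigeonhole on $\R/\Z$---is the standard one and is correct, including the endpoint analysis that yields the sharp constant~$3$.

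There is, however, a genuine gap in your verification of axiom~(2). You write that nesting ``can be verified by direct computation,'' but carrying out that computation shows the opposite: with the formula $\D^t=\{2^{-k}([0,1)^d+m+t)\}$ exactly as stated, the grids with $t=\pm\tfrac13$ are \emph{not} nested. In one dimension with $t=\tfrac13$, the interval $[\tfrac13,\tfrac43)$ (the cube with $k=0$, $m=0$) and the interval $[\tfrac16,\tfrac23)$ (the cube with $k=1$, $m=0$) meet in $[\tfrac13,\tfrac23)$, which is neither of them nor empty. The arithmetic obstruction is that the left endpoints at level $k$ lie in $2^{-k}\Z+2^{-k}t$, and for these to refine as $k$ increases one needs $2^{-k}t-2^{-k-1}t=2^{-k-1}t\in 2^{-k-1}\Z$, i.e.\ $t\in\Z$. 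This is a known wrinkle of the one-third trick; formulations that do produce genuine dyadic grids replace $t$ by $(-1)^k t$ (or use an equivalent recursive construction), after which the endpoint check reduces to $2m+(-1)^k\in\Z$. Your covering argument, which works at a single fixed scale, is unaffected by this correction since the set $\{0,\pm\tfrac13\}$ is symmetric under sign change. So the second half of your proof stands, but the first half does not---and the defect lies already in the paper's displayed formula for $\D^t$, which your ``direct computation'' would have exposed had you actually performed it.
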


Given $0\leq \alpha<d$, matrix weights $U$ and $V$ and a dyadic grid
$\D$, define the dyadic maximal operator $M_{\alpha,U,V}^\D$ as
in~\eqref{eqn:matrix-max} but with the supremum taken over all cubes
$Q\in \D$ containing $x$.   Then the following result follows at once
from Proposition~\ref{dyadic}
(cf.~\cite[Proposition~3.2]{CruzUribe:2016ji}).

\begin{proposition} \label{prop:dyadic-max-approx}
Given $0\leq \alpha<d$, matrix weights $U$ and $V$, let $\D^t$ be the
dyadic grids from Proposition~\ref{dyadic}.  Then for all $x\in \R^d$,
\[ M_{\alpha,U,V}f(x) \lesssim \sum_{t\in \{0,\pm\frac{1}{3}\}^d}
M_{\alpha,U,V}^{\D^t}f(x). \]
\end{proposition}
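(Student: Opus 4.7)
The plan is to apply the ``one-third trick'' from Proposition~\ref{dyadic} directly to the definition of $M_{\alpha,U,V}$. Given a point $x\in\R^d$ and an arbitrary cube $Q\ni x$ realizing (or nearly realizing) the supremum defining $M_{\alpha,U,V}f(x)$, I will locate a dyadic cube $Q_t\in\D^t$ (for some $t\in\{0,\pm\tfrac13\}^d$) with $Q\subset Q_t$ and $\ell(Q_t)\leq 3\ell(Q)$, use nonnegativity of the integrand to enlarge the domain of integration from $Q$ to $Q_t$, and absorb the resulting volume factor into an absolute constant.

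More precisely, for any $Q\ni x$ with corresponding $Q_t$,
\[
\frac{1}{|Q|^{1-\frac{\alpha}{d}}}\int_Q |U(x)^{\frac1q}V(y)^{-\frac1p}f(y)|\,dy
\leq \frac{1}{|Q|^{1-\frac{\alpha}{d}}}\int_{Q_t} |U(x)^{\frac1q}V(y)^{-\frac1p}f(y)|\,dy
\leq 3^{d-\alpha}\cdot \frac{1}{|Q_t|^{1-\frac{\alpha}{d}}}\int_{Q_t} |U(x)^{\frac1q}V(y)^{-\frac1p}f(y)|\,dy,
\]
where the first inequality uses that the integrand is nonnegative and $Q\subset Q_t$, and the second uses $|Q_t|\leq 3^d|Q|$. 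Since $Q\subset Q_t$ and $x\in Q$, we have $x\in Q_t\in\D^t$, so the last expression is bounded by $3^{d-\alpha}M_{\alpha,U,V}^{\D^t}f(x)$. Taking the supremum over all $Q\ni x$ yields
\[
M_{\alpha,U,V}f(x)\leq 3^{d-\alpha}\max_{t\in\{0,\pm\frac13\}^d}M_{\alpha,U,V}^{\D^t}f(x)\leq 3^{d-\alpha}\sum_{t\in\{0,\pm\frac13\}^d}M_{\alpha,U,V}^{\D^t}f(x),
\]
which is the desired pointwise bound.

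There is essentially no obstacle here: the argument is purely geometric and uses only that the integrand $|U(x)^{1/q}V(y)^{-1/p}f(y)|$ is nonnegative in $y$, so enlarging the domain of integration only increases the integral, and that the volume loss $|Q_t|/|Q|\leq 3^d$ is absorbed into the implicit constant of the symbol $\lesssim$. The proof is independent of the matrix structure and of the fact that the ``center'' $x$ appears inside the integrand through $U(x)^{1/q}$: since $x$ is fixed when computing the supremum, this factor simply comes along for the ride. This is the natural matrix analogue of the scalar result in \cite[Proposition~3.2]{CruzUribe:2016ji}.
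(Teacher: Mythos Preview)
Your proof is correct and is exactly the argument the paper has in mind: the paper simply asserts that the result ``follows at once from Proposition~\ref{dyadic}'' and points to the scalar analogue~\cite[Proposition~3.2]{CruzUribe:2016ji}, and what you have written is precisely that immediate deduction. The key observations---nonnegativity of the integrand, $Q\subset Q_t$, and $|Q_t|\leq 3^d|Q|$ with $1-\alpha/d>0$---are all handled correctly.
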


As a consequence of Proposition~\ref{prop:dyadic-max-approx}, to prove
Theorem~\ref{thm:mainmax} it will suffice to prove it for
$M_{\al,U,V}^\D$, where $\D$ is any dyadic grid.  For the remainder of
this section, fix a dyadic grid $\D$.

Our proof is adapted from the proof of the boundedness of the one
weight maximal operator in~\cite{G}.  We begin with two lemmas.   For brevity, we will
write $\Vv_Q^\Phi$ for the reducing operator $\Vv_Q^{p,\Phi}$.  The
first gives a norm inequality for an auxiliary
 fractional maximal operator,
analogous to the operator $M_{W}'$ introduced in~\cite{CG,G}.

\begin{lemma} \label{lemma:auxiliary-max}
Given $0\leq \beta<d$, let $1<p\leq q<\infty$ be such that
$\frac{\beta}{d}=\frac{1}{p}-\frac{1}{q}$.  Let $\Phi$ be a Young
function such that $\bar{\Phi}\in B_{p,q}$.  Given  a matrix weight $V$, define the auxiliary
maximal operator
$$ M^\D_{\beta,V}f(x)
=\sup_{Q\in \D }|Q|^{ \frac{\beta}{d}}
\avgint_Q |(\mathcal{V}_Q^\Phi)^{-1}V(y)^{-\frac1p}f(y)|\,dy\cdot \chi_Q(x).$$
Then $M^\D_{\beta,V}:L^p(\R^d,\Cn)\ra L^q(\R^d,\Cn)$.
\end{lemma}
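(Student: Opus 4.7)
The plan is to dominate $M^\D_{\beta,V}$ pointwise by a scalar Orlicz fractional maximal operator and then invoke the known scalar $L^p \to L^q$ bound. The factor $(\mathcal{V}_Q^\Phi)^{-1}$ appearing in the definition is designed precisely to neutralize the Orlicz norm of $V^{-1/p}$ on $Q$, which makes the reduction transparent once the reducing operator identities of Proposition~\ref{prop:reducing-norm} are applied.

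First I would pass to the operator norm inside the integral and apply the generalized H\"older inequality~\eqref{eqn:gen-holder} with the complementary pair $(\Phi,\bar\Phi)$:
$$
\frac{1}{|Q|}\int_Q |(\mathcal{V}_Q^\Phi)^{-1}V(y)^{-\frac1p}f(y)|\,dy
\le 2\,\|(\mathcal{V}_Q^\Phi)^{-1}V^{-\frac1p}\|_{\Phi,Q}\,\|f\|_{\bar\Phi,Q}.
$$
Next I would show the first factor is uniformly bounded. Using the remark after Proposition~\ref{prop:reducing-norm} (which permits replacing $\B_Q^\Phi$ by an arbitrary constant matrix) with matrix weight $V^{-1/p}$, Young function $\Phi$, and constant matrix $(\mathcal{V}_Q^\Phi)^{-1}$, together with the fact that $V^{-1/p}$, $\mathcal{V}_Q^\Phi$, and $(\mathcal{V}_Q^\Phi)^{-1}$ are self-adjoint (so $|AB|_\op = |(AB)^*|_\op = |BA|_\op$), we get
$$
\|(\mathcal{V}_Q^\Phi)^{-1}V^{-\frac1p}\|_{\Phi,Q}
=\|V^{-\frac1p}(\mathcal{V}_Q^\Phi)^{-1}\|_{\Phi,Q}
\approx |\mathcal{V}_Q^\Phi(\mathcal{V}_Q^\Phi)^{-1}|_\op = 1.
$$
Combining this with the previous step yields the pointwise domination
$$
M^\D_{\beta,V}f(x) \lesssim \sup_{\substack{Q\in\D\\ Q\ni x}}|Q|^{\frac{\beta}{d}}\|f\|_{\bar\Phi,Q} =: M^\D_{\beta,\bar\Phi}f(x).
$$

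Finally, I would invoke the known scalar result: since $\bar\Phi\in B_{p,q}$ and $\frac{\beta}{d}=\frac{1}{p}-\frac{1}{q}$, the dyadic Orlicz fractional maximal operator $M^\D_{\beta,\bar\Phi}$ maps $L^p(\R^d)$ into $L^q(\R^d)$. This was proved by P\'erez~\cite{P} under the stronger $B_p$ condition and was sharpened to the $B_{p,q}$ condition in~\cite{CM}. Applied to the scalar function $|f|$, this completes the proof.

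The main obstacle is the middle step: verifying uniformly in $Q$ that $\|(\mathcal{V}_Q^\Phi)^{-1}V^{-\frac1p}\|_{\Phi,Q}\lesssim 1$. This is precisely where the choice of the normalizing factor $(\mathcal{V}_Q^\Phi)^{-1}$ and the defining property of the reducing operator are used in an essential way; after this estimate is in hand, the rest of the argument is routine generalized H\"older plus the scalar fractional maximal bound.
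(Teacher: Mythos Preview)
Your proposal is correct and follows essentially the same route as the paper's proof: apply the generalized H\"older inequality~\eqref{eqn:gen-holder}, use the reducing-operator identity from Proposition~\ref{prop:reducing-norm} (and the remark following it) to show $\|(\mathcal{V}_Q^\Phi)^{-1}V^{-1/p}\|_{\Phi,Q}\lesssim 1$, obtain the pointwise domination $M^\D_{\beta,V}f(x)\lesssim M_{\beta,\bar\Phi}(|f|)(x)$, and then invoke the scalar bound from~\cite{CM}. The only cosmetic difference is that the paper dominates by the non-dyadic Orlicz fractional maximal operator rather than its dyadic version, which is immaterial.
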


\begin{proof}
Define the Orlicz fractional maximal operator
\[ M_{\beta,\bar{\Phi}}f(x) = \sup_Q
  |Q|^{\frac{\beta}{d}}\|f\|_{\bar{\Phi},Q}\cdot \chi_Q(x); \]
if $\beta=0$, we write $M_{\bar{\Phi}}= M_{0,\bar{\Phi}}$.  It was shown in~\cite{CM} that
\begin{equation} \label{eqn:frac-orliz-max}
M_{\beta,\bar{\Phi}}:L^p(\R^d)\ra L^q(\R^d).
\end{equation}

Now fix $x\in \R^d$ and $Q\in \D$ containing $x$.  Then by the
generalized H\"older inequality \eqref{eqn:gen-holder} we
have that
\[ |Q|^{\frac{\beta}{d}}\avgint_Q
  |(\mathcal{V}_Q^\Phi)^{-1}V(y)^{-\frac1p}f(y)|\,dy
\lesssim \|(\mathcal{V}_Q^\Phi)^{-1}V^{-\frac1p}\|_{\Phi,Q}
|Q|^{\frac{\beta}{d}}\|f\|_{\bar{\Phi},Q}. \]
By the first inequality in~\eqref{eqn:reducing-norm2} (which holds if
we replace the reducing operator $\B_Q^\Phi$ by any matrix), we have
that for all cubes $Q$,
\begin{equation*}
 \|(\mathcal{V}_Q^\Phi)^{-1}V^{-\frac1p}\|_{\Phi,Q}
=\|V^{-\frac1p} (\mathcal{V}_Q^\Phi)^{-1}\|_{\Phi,Q}
\lesssim |\Vv_Q^\Phi (\mathcal{V}_Q^\Phi)^{-1} |_\op = 1.
\end{equation*}
Therefore, if we combine these two inequalities and take the supremum
over all cubes $Q$ containing $x$, we get that
$M^\D_{\beta,V}f(x)\lesssim M_{\beta,\bar{\Phi}}(|f|)(x)$. The desired
norm inequality follows at once.
\end{proof}

For the second lemma, given a cube $Q\in \D$, let $\D(Q) = \{ P \in \D
: P\subset Q\}$ and define the maximal type operator
\begin{equation} \label{eqn:NQ-defn}
 N_Q(x) = \sup_{R\in \D(Q)}
  |Q|^{\frac{\al}{d}+\frac1q-\frac1p}|U(x)^{\frac1q}\mathcal{V}_R^\Phi|_\op
  \cdot \chi_R(x).
\end{equation}

\begin{lemma} \label{lemma:NQq}
Given a pair of matrix weights  $U$, $V$ that satisfy
\eqref{matrixbump1},  then
\begin{equation}\label{NQineq}
\sup_{Q\in \D} \, \dashint_Q N_Q(x)^q\,dx< \infty.
\end{equation}
\end{lemma}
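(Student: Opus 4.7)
The plan is to fix $Q \in \D$, set $c = \frac{\alpha}{d}+\frac{1}{q}-\frac{1}{p} \geq 0$, and control $\dashint_Q N_Q(x)^q\,dx$ uniformly in $Q$ by combining a linearization of the operator norm with a Calder\'on-Zygmund stopping-time decomposition of $\D(Q)$, together with the reducing-operator form~\eqref{eqn:mainmax-alt} of the bump hypothesis~\eqref{matrixbump1}.

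First, I would reduce the matrix-valued supremum inside $N_Q$ to scalar quantities. Using Proposition~\ref{prop:reducing-norm} and a fixed orthonormal basis $\{e_j\}_{j=1}^n$ of $\C^n$,
\[|U(x)^{1/q}\Vv_R^\Phi|_\op \approx \sum_{j=1}^n |U(x)^{1/q}\Vv_R^\Phi e_j|.\]
Since $\sup_R (\sum_j A_{R,j}) \leq \sum_j \sup_R A_{R,j}$, by Minkowski it suffices to show, for each direction $e_j$,
\[|Q|^{qc-1}\int_Q \Bigl(\sup_{R\in \D(Q),\, R\ni x} |U(x)^{1/q}\Vv_R^\Phi e_j|\Bigr)^q dx \lesssim [U,V]_{p,q,\Phi}^q.\]

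Second, I would perform a stopping-time decomposition producing a sparse family $\Ss \subset \D(Q)$, tuned to the scalar quantity $|\Vv_R^\Phi e_j|$. Starting from $Q \in \Ss$, iteratively select the maximal dyadic descendants $P'\subset P$ with $|\Vv_{P'}^\Phi e_j| > 2|\Vv_P^\Phi e_j|$, and set $E_P = P \setminus \bigcup_{P'} P'$. The principal sets $\{E_P\}$ are pairwise disjoint with $|E_P|\gtrsim|P|$, and on each $E_P$ the stopping rule combined with geometric summation along the chain of sparse ancestors yields the pointwise bound
\[\sup_{R\in \D(Q),\, R\ni x} |U(x)^{1/q}\Vv_R^\Phi e_j| \lesssim |U(x)^{1/q}\Vv_{\pi(x)}^\Phi e_j|,\]
where $\pi(x)\in \Ss$ is the smallest sparse cube containing $x$. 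Integrating gives
\[\int_Q \Bigl(\sup_{R} |U(x)^{1/q}\Vv_R^\Phi e_j|\Bigr)^q dx \lesssim \sum_{P \in \Ss} \int_{E_P} |U(x)^{1/q}\Vv_P^\Phi|_\op^q dx.\]

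Third, I would invoke the bump condition in the form~\eqref{eqn:mainmax-alt}, which for each sparse $P$ gives
\[\int_P |U(x)^{1/q}\Vv_P^\Phi|_\op^q dx \lesssim [U,V]_{p,q,\Phi}^q \,|P|^{1-qc}.\]
Using the disjointness of $\{E_P\}$ together with the sparseness $|E_P|\gtrsim |P|$, this sums to $[U,V]_{p,q,\Phi}^q\,|Q|^{1-qc}$ via a Carleson-type rearrangement, and after multiplying by $|Q|^{qc-1}$ yields the desired uniform bound.

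The main obstacle is the coupled $(x, R)$-dependence of $|U(x)^{1/q}\Vv_R^\Phi|_\op$: because the product does not factor into purely $x$- and $R$-dependent pieces, the sparse decomposition has to be carried out separately in each basis direction $e_j$, and the geometric growth of $|\Vv_R^\Phi e_j|$ along chains of sparse ancestors must be carefully matched with the operator-norm estimate from the bump hypothesis so that the final Carleson summation produces the correct power $|Q|^{1-qc}$, which is delicate in the off-diagonal case $qc > 0$.
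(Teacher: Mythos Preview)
There is a genuine gap in your second step.  The claimed pointwise bound
\[
\sup_{R\in \D(Q),\, R\ni x} |U(x)^{1/q}\Vv_R^\Phi e_j| \lesssim |U(x)^{1/q}\Vv_{\pi(x)}^\Phi e_j|
\]
does not follow from a stopping rule based on the scalar $|\Vv_R^\Phi e_j|$.  Your stopping condition controls only the \emph{length} of the vector $\Vv_R^\Phi e_j$ relative to $\Vv_P^\Phi e_j$, not its \emph{direction}.  Since $U(x)^{1/q}$ is a genuine matrix, it can stretch different directions by very different amounts: one can easily have $|\Vv_R^\Phi e_j|\le 2|\Vv_P^\Phi e_j|$ while $|U(x)^{1/q}\Vv_R^\Phi e_j|\gg |U(x)^{1/q}\Vv_P^\Phi e_j|$, simply because $\Vv_R^\Phi e_j$ happens to point along an eigendirection of $U(x)$ with a large eigenvalue and $\Vv_P^\Phi e_j$ does not.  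The linearization via a fixed basis $\{e_j\}$ does nothing to prevent this, and the geometric decay of $|\Vv_{P_i}^\Phi e_j|$ along the sparse chain therefore does not transfer to $|U(x)^{1/q}\Vv_{P_i}^\Phi e_j|$.  (A secondary, related difficulty is your Carleson step $\sum_{P\in\Ss}|P|^{1-qc}\lesssim |Q|^{1-qc}$, which for $qc>0$ does not follow from sparseness alone.)

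The paper's argument avoids this obstruction by inserting the reducing operator $\Uu_Q^q$ of $U^{1/q}$ over $Q$ as an intermediary and running the stopping time on an $x$-free quantity.  One factors
\[
|U(x)^{1/q}\Vv_R^\Phi|_\op \le |U(x)^{1/q}(\Uu_Q^q)^{-1}|_\op\,|\Uu_Q^q\Vv_R^\Phi|_\op
\]
and selects the maximal $R\subset Q$ with $|R|^c|\Uu_Q^q\Vv_R^\Phi|_\op>C$; the bump hypothesis in the form~\eqref{eqn:mainmax-alt} then yields the packing estimate $\bigl|\bigcup R_j^1\bigr|\le\tfrac12|Q|$.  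On the complement the second factor is $\le C$, while the first factor has $\dashint_Q |U(x)^{1/q}(\Uu_Q^q)^{-1}|_\op^q\,dx\lesssim 1$ directly from the definition of the reducing operator.  One then iterates on each $R_j^1$ (replacing $\Uu_Q^q$ by $\Uu_{R_j^1}^q$) and a truncation makes the resulting geometric series finite.  The factorization through $\Uu_Q^q$ is exactly what correctly decouples the $x$-dependence from the $R$-dependence, and it is the ingredient your proposal is missing.
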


Lemma~\ref{lemma:NQq} is actually an immediate consequence of
Lemma~\ref{NQOrliczLem} which we will need to prove
Theorem~\ref{thm:mainint}, and so its proof is deferred to the next
section: see Remark~\ref{remark:NQq-proof}.

\begin{proof}[Proof of Theorem \ref{thm:mainmax}]
Fix $\beta$ such that $\frac{\beta}{d}=\frac{1}{p}-\frac{1}{q}$.  Note
that by our assumption on $p$ and $q$, $\beta \geq 0$.  Given any cube $Q$,
\begin{align*}
& |Q|^{\frac{\al}{d}}\avgint_Q |U(x)^{\frac1q}V(y)^{-\frac1p}f(y)|\,dy
  \\
 & \qquad \qquad =
|Q|^{\frac{\al}{d}+\frac1q-\frac1p}    |Q|^{\frac1p-\frac1q}
\avgint_Q |U(x)^{\frac1q}\mathcal{V}_Q^\Phi (\mathcal{V}_Q^\Phi)^{-1}V(y)^{-\frac1p}f(y)|\,dy \\
& \qquad \qquad \leq
  |Q|^{\frac{\al}{d}+\frac1q-\frac1p}|U(x)^{\frac1q}\mathcal{V}_Q^\Phi|_{\text{op}}
|Q|^{\frac{\beta}{d}}\avgint_Q |(\mathcal{V}_Q^\Phi)^{-1}V(y)^{-\frac1p}f(y)|\,dy.
\end{align*}

For every $x\in \R^d$ there exists $Q=Q_x\in \D$ such that
$$M_{\al,U,V} ^\D f(x)
\leq
2|Q|^{\frac{\al}{d}+\frac1q-\frac1p}|U(x)^{\frac1q}\mathcal{V}_Q^\Phi|_{\text{op}}
|Q|^{\frac{\beta}{d}}\avgint_Q|(\mathcal{V}_Q^\Phi)^{-1}V(y)^{-\frac1p}f(y)|\,dy.$$
There exists a unique $j=j_x\in \Z$ such that
\begin{equation}\label{2jmax}
2^j<|Q_x|^{\frac{\beta}{d}}\avgint_{Q_x}|(\mathcal{V}_{Q_x}^\Phi)^{-1}V(y)^{-\frac1p}f(y)|\,dy\leq
2^{j+1}.
\end{equation}

Now for each $j \in \Z$, let  $\mathcal S_j$ be the collection of
cubes $Q = Q_x$ that are maximal with respect to \eqref{2jmax}.   Note
that the cubes in $\mathcal S_j$  are disjoint.  Then
for each $x\in \R^d$ there exists $j\in \Z$ and $S\in \mathcal{S}_j$
such that $x\in Q \subset  S$ and
\[
M_{\al,U,V} ^\D f(x) \leq
  2|Q|^{\frac{\al}{d}+\frac1q-\frac1p}|U(x)^{\frac1q}\mathcal{V}_Q^\Phi|_{\text{op}}
|Q|^{\frac{\beta}{d}}\avgint_Q|(\mathcal{V}_Q^\Phi)^{-1}V(y)^{-\frac1p}f(y)|\,dy
\leq 2^{j+2}N_S(x).
\]
Moreover, we have that
\[ \bigcup_{S\in \mathcal S_j} S \subset \{ x\in \R^d : M_{\beta,V}^\D
  f(x)>2^j \}. \]
 Hence, by Lemmas ~\ref{lemma:auxiliary-max} and  ~\ref{lemma:NQq},  
\begin{align*}
\int_{\R^d}|M_{\al,U,V}^\D f(x)|^q\,dx
&\lesssim \sum_{j\in \Z}2^{jq}\sum_{S\in \mathcal{S}_j} \int_S N_S(x)^q\,dx\\
&\leq \sum_{j\in \Z} 2^{jq}\sum_{S\in \mathcal S_j}|S|\\
&= \sum_{j\in \Z} 2^{jq}\Big|\bigcup_{S\in \mathcal S_j}S\Big|\\
&\leq \sum_{j\in \Z}2^{jq}|\{x:M_{\beta,V} ^\D f(x)>2^j\}|\\
&\lesssim \int_{\R^d} M_{\beta,V} ^\D f(x)^q\,dx\\
&\lesssim\left(\,\int_{\R^d} |f(x)|^p\,dx\right)^{\frac{q}{p}}.
\end{align*}
\end{proof}

\section{Proof of Theorem~\ref{thm:mainint}}
\label{section:proof-mainint}

Throughout this section, for brevity we will write
$\Uu_Q^\Psi=\Uu_Q^{q,\Psi}$ and $\Vv_Q^\Phi=\Vv_Q^{p,\Phi}$.  We begin
with a lemma that extends \cite[Lemma~3.3]{G} to the scale of Orlicz spaces.

\begin{lemma} \label{NQOrliczLem}
Given a pair of matrix weights $U$, $V $ that satisfy \eqref{matrixbump2}, then
\begin{equation*}\sup_{Q\in \D}  \|N_Q\|_{\Psi, Q} < \infty \end{equation*}
 \end{lemma}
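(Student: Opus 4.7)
The plan is to adapt Goldberg's argument from the $L^q$ setting (see \cite[Lemma~3.3]{G}, which yields the estimate \eqref{NQineq} of Lemma~\ref{lemma:NQq}) to the present Orlicz setting, using the $B_{q'}$ hypothesis on $\bar\Psi$. Set $g_R(x) := |R|^{\frac{\alpha}{d}+\frac{1}{q}-\frac{1}{p}}|U(x)^{\frac{1}{q}}\Vv_R^\Phi|_\op$, so that $N_Q(x) = \sup_{R\in\D(Q),\, x\in R} g_R(x)$. Proposition~\ref{prop:reducing-norm} combined with the bump hypothesis \eqref{matrixbump2} yields the uniform bound $\|g_R\|_{\Psi,R} \lesssim [U,V]_{p,q,\Phi,\Psi}$ for every $R\in\D(Q)$; after rescaling, I may take this constant to be~$1$. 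The task reduces to deducing $\|N_Q\|_{\Psi,Q}\lesssim 1$ from this uniform family of localized Orlicz bounds together with the dyadic structure.

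I would invoke Orlicz duality,
\[
\|N_Q\|_{\Psi,Q} \approx \sup_{\substack{g\geq 0 \\ \|g\|_{\bar\Psi,Q}\leq 1}} \frac{1}{|Q|}\int_Q N_Q(x) g(x)\,dx,
\]
fix such a $g$, and bound the right-hand side in three steps. Step 1 (linearization): for a.e. $x\in Q$ pick the maximal $R(x)\in\D(Q)$ with $g_{R(x)}(x)\geq \tfrac{1}{2}N_Q(x)$ and set $E_R := \{x: R(x)=R\}$, so the $E_R$ partition $Q$ and $E_R\subseteq R$. Step 2 (principal cubes for $g$): put $Q\in\mathcal{F}$ and, given $P\in\mathcal{F}$, declare its children to be the maximal $R\subsetneq P$ with $\|g\|_{\bar\Psi,R} > 2\|g\|_{\bar\Psi,P}$; since $\bar\Psi\in B_{q'}$ gives the $L^{q'}$-boundedness of the dyadic Orlicz maximal operator $M^\D_{\bar\Psi}$, a standard stopping-time argument produces the sparseness $|P|\leq 2|E(P)|$, where $E(P):= P\setminus \bigcup_{R'\in\mathrm{ch}(P)} R'$. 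Step 3 (generalized Hölder): on each $R$ the inequality \eqref{eqn:gen-holder} and the uniform bound on $\|g_R\|_{\Psi,R}$ give
\[
\int_{E_R} g_R(x) g(x)\,dx \leq 2|R|\,\|g_R\|_{\Psi,R}\,\|g\|_{\bar\Psi,R} \lesssim |R|\,\|g\|_{\bar\Psi,\pi(R)},
\]
where $\pi(R)\in\mathcal{F}$ is the minimal principal ancestor of $R$. Regrouping by $\pi$, using $\sum_{\pi(R)=P}|E_R|\leq |P|$, then the sparseness together with the pointwise bound $\|g\|_{\bar\Psi,P}\leq M^\D_{\bar\Psi}g(x)$ for $x\in E(P)$, reduces the full sum to a constant multiple of $\dashint_Q M^\D_{\bar\Psi}g(x)\,dx$.

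The main obstacle is the final inequality: converting this into the uniform bound $\dashint_Q M^\D_{\bar\Psi}g\,dx \lesssim 1$ under only the Luxemburg normalization $\|g\|_{\bar\Psi,Q}\leq 1$. The $L^{q'}\to L^{q'}$ boundedness of $M^\D_{\bar\Psi}$ (equivalent to $\bar\Psi\in B_{q'}$) is not itself an $L^1$-average statement, so one must close the gap via a second generalized Hölder or an $L^{q'}$--$L^q$ duality that again exploits the $B_{q'}$ hypothesis. A secondary technical point is verifying that the principal-cube family for $g$ is genuinely sparse in the Orlicz sense, which likewise reduces to the weak-type estimate for $M^\D_{\bar\Psi}$ afforded by $\bar\Psi \in B_{q'}$. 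With these two Orlicz-analytic points in hand, the remaining pieces---duality, linearization, generalized Hölder, and principal-cube regrouping---are standard adaptations of the bump theory developed in \cite{CM,CruzUribe:2016ji,Lern2012}.
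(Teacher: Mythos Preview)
Your reduction in the first paragraph discards too much information. You propose to deduce $\|N_Q\|_{\Psi,Q}\lesssim 1$ from the diagonal bounds $\|g_R\|_{\Psi,R}\lesssim 1$ alone (``the dyadic structure'' adds nothing further), but that implication is false. Take $\Psi(t)=t^q$ on $Q=[0,1]$: fix $K\in\mathbb{N}$, and for each dyadic $I\subset Q$ with $\ell(I)>2^{-K}$ choose a dyadic $J_I\subset I$ of length $2^{-K}$ so that the $J_I$ are pairwise disjoint (an easy induction on $K$ shows this is possible, since there are $2^K-1$ such $I$ and $2^K$ level-$K$ intervals available). Set $g_I=(|I|/|J_I|)^{1/q}\chi_{J_I}$ and $g_R=0$ for all other $R\in\D(Q)$. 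Then every $\|g_I\|_{q,I}=1$, yet $N_Q=g_I$ on each $J_I$, giving $\|N_Q\|_{q,Q}^q=\sum_I|I|=K$. So the abstract problem you reduce to has no uniform bound; any successful argument must exploit the specific matrix form $g_R(x)=|R|^{\frac\alpha d+\frac1q-\frac1p}|U(x)^{1/q}\Vv_R^\Phi|_\op$.

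The paper's proof does this via a Corona-type iteration on the $\Phi$-side rather than by duality. One fixes $Q$, selects the maximal $R\subset Q$ with $|R|^{\frac\alpha d+\frac1q-\frac1p}|\Vv_R^\Phi\,\Uu_Q^\Psi|_\op>C$, and uses $\|V^{-1/p}\Uu_Q^\Psi\|_{\Phi,Q}\lesssim[U,V]_{p,q,\Phi,\Psi}$---note the \emph{off-diagonal} pairing of $\Vv_R^\Phi$ with $\Uu_Q^\Psi$, which is invisible in your diagonal data---to show these cubes fill at most half of $Q$. Off them one has the pointwise bound $N_Q(x)\le C|U(x)^{1/q}(\Uu_Q^\Psi)^{-1}|_\op$, whose localized $\Psi$-norm is $\lesssim 1$ by~\eqref{eqn:reducing-norm2}; iterating on each stopped cube and summing the resulting geometric series controls $\dashint_Q\Psi(N_Q/C')$ directly. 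Two further symptoms confirm that your outline, as written, does not close: Step~3 produces the factor $|R|$ (from H\"older over $R$) rather than $|E_R|$, so the regrouping via $\sum_{\pi(R)=P}|E_R|\le|P|$ is unavailable; and the endgame $\dashint_Q M^\D_{\bar\Psi}g\lesssim 1$ from $\|g\|_{\bar\Psi,Q}\le 1$ is essentially an $L^{\bar\Psi}\to L^1$ bound for $M_{\bar\Psi}$, which $\bar\Psi\in B_{q'}$ does not supply.
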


\begin{remark} \label{remark:NQq-proof}
Since $\bar{\Psi}\in B_{q'}$, we have that $\bar{\Psi}(t)\lesssim t^{q'}$
and so $t^q \lesssim \Psi(t)$.  Therefore, for any cube $Q\in \D$,
$\|N_Q\|_{q,Q} \lesssim \|N_Q\|_{\Psi,Q}$ (cf.~\cite{MR2797562}), and so Lemma~\ref{lemma:NQq}
follows immediately from Lemma~\ref{NQOrliczLem}.
\end{remark}

\begin{proof}[Proof of Lemma \ref{NQOrliczLem}.]
Fix a cube $Q\in \D$.   We first claim that there exists $C>0$ sufficiently large such that
  if  $\{R_j^1\}$ is the collection
  of maximal dyadic subcubes $R$ of $Q$, if any,
  satisfying
\begin{equation*}
|R| ^{\frac{\alpha}{d} + \frac1q -
      \frac1p}\OP{\MC{V}_R ^\Phi \MC{U}_{Q} ^\Psi} > C,
\end{equation*}
 then
\begin{equation} \label{DecayIneq}
\bigg|\bigcup_{j} R_j^1    \bigg| \leq \frac{1}{2}|Q|.
 \end{equation}

 To see this, note that since $\frac{\al}{d}\geq \frac1p-\frac1q $, by
 inequality~\eqref{eqn:reducing-norm2} we have that
\[
C  < |R_j ^1| ^{\frac{\alpha}{d} + \frac1q -
     \frac1p} \OP{\MC{V}_{R_j ^1} ^\Phi \MC{U}_Q ^\Psi}
 \leq |Q|^{\frac{\alpha}{d} + \frac1q - \frac1p} \OP{\MC{V}_{R_j ^1} ^\Phi
     \MC{U}_Q ^\Psi}
 \leq C' |Q| ^{\frac{\alpha}{d} + \frac1q -\frac1p}
 \|V^{-\frac1p} \MC{U}_Q ^\Psi \|_{\Phi, R_j^1},
\]
where $C' > 1$ depends only on $n$.  Therefore, by the definition of
the Luxemburg norm,
\begin{equation*}
\dashint_{R_j ^1} \Phi
  \bigg(\frac{|V(y)^{-\frac1p} \MC{U}_Q ^\Psi|_\op}{(C'|Q|^{
        \frac{\alpha}{d} + \frac1q - \frac1p})^{-1} C} \bigg) \, dy
>  1.
\end{equation*}

Now set $C = 2 C' \|(U, V)\|' $, where by~\eqref{eqn:reducing-norm2},
\eqref{eqn:mainint-alt} and our assumption on the weights $U$ and $V$,
 \begin{equation*}
  \|(U, V)\|'
 = \sup_Q|Q|^{\frac{\al}{d}+\frac1q-\frac1p} \|V^{-\frac1p}
\MC{U}_Q^\Psi\|_{\Phi, Q}
\lesssim [U,V]_{p,q,\Phi,\Psi}< \infty.
\end{equation*}
 Since the cubes
$\{R_j^1\}$ are disjoint and $\Phi$ is convex, we get
\begin{multline*}
\sum_{j}  |R_j ^1|
 \leq \sum_j \int_{R_j ^1} \Phi \bigg(\frac{|V(y)^{-\frac1p}
  \MC{U}_Q ^\Psi|_\op}{(C'|Q|^{ \frac{\alpha}{d} + \frac1q -
  \frac1p})^{-1}  C} \bigg) \, dy  \\
 \leq \frac{|Q|}{2} \dashint_Q \Phi  \bigg(\frac{|V(y)^{-\frac1p}
\MC{U}_Q ^\Psi|_\op}{  \|V^{-\frac1p}
\MC{U}_Q^\Psi\|_{\Phi, Q} } \bigg) \, dy
 \leq  \frac{|Q|}{2};
\end{multline*}
this proves \eqref{DecayIneq}.

\medskip

To complete the proof we will use an approximation argument.  For
$m\in \mathbb{N}$ such that $2^{-m} < \ell(Q)$, define the truncated operator
 $$N^m_Q(x)=\sup_{\substack{R\in \D(Q)\\x\in R\\
\ell(R)>2^{-m}}}|R|^{\frac{\al}{d}+\frac1q-\frac1p}|U(x)^{\frac1q}\mathcal{V}_R^\Phi|_{\text{op}}.$$
We will prove that
\begin{equation} \label{eqn:truncation}
\dashint_Q \Psi\left(\frac{N_Q^m (x)}{ C C''} \right) \, dx \leq 3 ,
\end{equation}
where
\begin{equation*}
C'' = \sup_Q \|U^\frac1q (\MC{U}_Q ^\Psi)^{-1} \|_{\Psi, Q}
\lesssim 1.
\end{equation*}
(The last inequality follows from~\eqref{eqn:reducing-norm2}.) Then by
convexity and the definition of the Luxemburg norm we will have that
$\|N_Q^m\|_{\Psi, Q} \leq 3 C C''$, and the desired inequality follows
from Fatou's lemma as $m\rightarrow \infty$.

To prove~\eqref{eqn:truncation}, let $G_Q=\bigcup_jR^1_j$.
If $x\in Q\backslash G_Q$, then for any dyadic cube $R\in \D(Q)$
containing $x$ such that $\ell(R)>2^{-m}$ we have
\begin{multline*}
|R|^{\frac{\al}{d}+\frac1q-\frac1p}|U(x)^{\frac1q}\mathcal{V}_R^\Phi|_{\text{op}}
 =|R|^{\frac{\al}{d}+\frac1q-\frac1p}|U(x)^{\frac1q}(\MC{U}_Q
 ^\Psi)^{-1}\mathcal{U}_Q^\Psi \mathcal{V}_R^\Phi|_{\text{op}}\\
 \leq
 |R|^{\frac{\al}{d}+\frac1q-\frac1p}|U(x)^{\frac1q}(\mathcal{U}_Q^\Psi)^{-1}|_{\text{op}}|
\mathcal{U}_Q^\Psi\mathcal{V}_R^\Phi|_{\text{op}}
 \leq C|U(x)^{\frac1q}(\MC{U}_Q ^\Psi)^{-1}|_{\text{op}}.
\end{multline*}
Let $F_j = \{x \in R_j^1 : N_{Q } ^m (x) \neq N_{R_j^1} ^m (x)\}$.
Then by the maximality of the cubes $\{R_j^1\}$ and the previous
estimate, we have that if $x\in F_j$,
$N_{Q} ^m (x) \leq C|U(x)^{\frac1q}(\MC{U}_Q ^\Psi)^{-1}|_{\text{op}}$.

We can now estimate as follows:
\begin{align*}
& \int_Q \Psi\left( \frac{N_Q ^m(x)}{CC''} \right) \, dx \\
& \qquad  \qquad \leq
 \int_{Q \backslash G_Q} \Psi \bigg(\frac{|U^\frac1q (x) (\MC{U}_Q
  ^\Psi)^{-1} |_\op}{C''}\bigg) \, dx
 +
  \sum_{j} \int_{F_j } \Psi \bigg(\frac{|U^\frac1q (x) (\MC{U}_Q
  ^\Psi)^{-1} |_\op}{C''}\bigg) \, dx \\
& \qquad \qquad \qquad \qquad +
  \sum_{j} \int_{R_j^1 \backslash F_j} \Psi \left(\frac{N_Q
  ^m(x)}{CC''} \right) \, dx \\
& \qquad \qquad \leq
2  |Q| +  \sum_{j} \int_{R_j^1} \Psi \left(\frac{N_{R_j^1} ^m(x)}{C C''}
  \right) \, dx.
\end{align*}

To estimate the last term we iterate this argument.  For each $j$ form
the collection $\{R^2_k\}$ of maximal dyadic cubes, if any, $R\in\D(R^1_j)$ such
that
$$|R|^{\frac{\al}{d}+\frac1q-\frac1p}|\mathcal{U}^\Psi_{R^1_j}\mathcal{V}_R^\Phi|_{\text{op}}>C.$$
Then we can repeat the first  argument above to show that for each $j$,
\begin{equation} \label{eqn:sparse}
\sum_{k :R^2_k\subset R_j^1} |R_k^2|\leq \frac{1}{2}|R_j^1|.
\end{equation}
Thus, repeating the second argument we get
\begin{align*}
\sum_j \int_{R_j^1} \Psi \bigg(\frac{N_{R_j^1} ^m(x)}{C C''}  \bigg) \, dx
& \leq \sum_j \sum_{k :R^2_k\subset R_j^1} |R_k^2| + \int_{R_k^w} \Psi
  \bigg(\frac{N_{R_k^2} ^m(x)}{C C''}  \bigg) \, dx \\
& \leq \frac{1}{2}|Q|+ \sum_j \sum_{k :R^2_k\subset R_j^1} \int_{R_k^2} \Psi
  \bigg(\frac{N_{R_k^2} ^m(x)}{C C''}  \bigg) \, dx.
\end{align*}

We continue with this argument on each integral on the right-hand
side.   However, by \eqref{eqn:sparse}, the cubes $R_k^2$ are
properly contained in the cubes $R_j^1$.  But for this argument we are
assuming that all the cubes have side length greater than $2^{-m}$.
Therefore, after $k$ iterations, where $k\geq m+\log_2(\ell(Q))$, the
resulting collection of cubes $\{R^k_i\}$ must be empty so the final
sum in the estimate vanishes.  So if we sum over the $k$ steps, we get
\[ \dashint_Q \Psi\left(\frac{N_Q^m (x)}{ C C''} \right) \, dx
\leq 3-2^{-k} \leq 3.  \]
This gives us~\eqref{eqn:truncation} and our proof is complete.
\end{proof}

\bigskip

\begin{proof}[Proof of Theorem \ref{thm:mainint}]
We will prove that $U ^\frac{1}{q} I_\alpha V^{-\frac1p} :
L^p(\R^d,\C^n) \rightarrow L^p(\R^d,\C^n)$.  By a standard
approximation argument, it will suffice to prove that
\[ \left|\ip{U ^\frac{1}{q} I_\alpha V^{-\frac1p} {f}}{{g}}_{L^2}
  \right|
\lesssim \|f\|_{L^p} \|g\|_{L^{q'}}, \]
where $f,\,g$ are bounded functions of compact support.
In~\cite[Lemma~3.8]{Isralowitz:2016we} it was shown that
\begin{equation*}
\left|\ip{U ^\frac{1}{q} I_\alpha V^{-\frac1p} {f}}{{g}}_{L^2} \right|
\lesssim \sum_{t \in \{0, \pm\frac13\}^d} \sum_{Q \in \D^t} {|Q|
  ^{\frac{\alpha}{d}}} \avgint_Q \int_Q \left|\ip{  V(y)
    ^{-\frac{1}{p}} {f}(y)}{U (x) ^\frac{1}{q} {g}(x) } _{\Cn} \right|
\, dx \, dy,
\end{equation*}
where the dyadic grids $\D^t$ are defined as in
Proposition~\ref{dyadic}.  Therefore, to complete the proof, it
suffices to fix a dyadic grid $\D$ and show that the inner sum is
bounded by $\|f\|_{L^p} \|g\|_{L^{q'}}$.  Our argument adapts to the
matrix setting the scalar, two weight argument originally due to
P\'erez~\cite{perez94} (see also~\cite{CruzUribe:2016ji}).

First note that by the generalized H\"older's
inequality in the scale of Orlicz spaces, inequality~\eqref{eqn:reducing-norm2} and the definition of~$\Vv_Q^\Phi$,
\begin{align*} \sum_{Q \in \D} &{|Q| ^{
      \frac{\alpha}{d}}} \avgint_Q \int_Q \left|\ip{ V(y) ^{-\frac{1}{p}}
      {f}(y)}{U (x) ^\frac{1}{q} {g}(x) } _{\Cn} \right| \, dx \, dy
  \\ & \leq \sum_{Q \in \D} |Q| ^{ \frac{\alpha}{d}}
  \left(\avgint_Q | (\MC{V}_Q ^\Phi)^{-1} V(y)
    ^{-\frac{1}{p}} {f}(y)| \, dy \right)\left( \int_Q | \MC{V}_Q
    ^\Phi U(x) ^\frac{1}{q} {g}(x)| \, dx \right) \\ & \leq \sum_{Q
    \in \D} |Q| ^{ \frac{\alpha}{d}} \| (\MC{V}_Q ^\Phi)^{-1} V
  ^{-\frac{1}{p}}\|_{\Phi, Q} \| {f}\|_{\bar{\Phi}, Q} \left(
    \int_Q | \MC{V}_Q ^\Phi U(x) ^\frac{1}{q} {g}(x)| \, dx \right) \\
  & \leq \sum_{Q \in \D} |Q| ^{ \frac{\alpha}{d}} \|
  {f}\|_{\bar{\Phi}, Q} \left( \int_Q | \MC{V}_Q ^\Phi U(x)
    ^\frac{1}{q} {g}(x)| \, dx \right).
\end{align*}

Fix $a>2^{d+1}$ and define the collection of cubes
 \begin{equation*}
\MC{Q} ^k =
  \{Q \in \D : a^k < \|{f}\|_{\bar{\Phi}, Q} \leq a^{k + 1}\},
\end{equation*}
and let $\MC{S}^k$ be the disjoint collection of
$Q\in \D$ that are maximal with respect to the
inequality $\|{f}\|_{\bar{\Phi}, Q} > a^k$.
Set $\MC{S} = \bigcup_k \MC{S}^k$.
We now continue the above estimate:
\begin{align*}
&  \sum_k \sum_{Q \in \MC{Q}^k} |Q| ^{ \frac{\alpha}{d}}\|
  {f}\|_{\bar{\Phi}, Q}
\left( \int_Q | \MC{V}_Q ^\Phi  U(x) ^\frac{1}{q}  {g}(x)| \, dx
  \right)  \\
& \qquad \qquad \leq  \sum_k  a^{k + 1} \sum_{Q \in \MC{Q}^k} |Q| ^{
  \frac{\alpha}{d}} \int_Q |\MC{V}_Q ^\Phi  U(x) ^\frac{1}{q}  {g}(x)|
  \, dx \\
& \qquad \qquad = \sum_k  a^{k + 1} \sum_{P \in \MC{S}^k} \sum_{\substack{Q \in
  \MC{Q}^k \\ Q \subset P}} |Q| ^{ \frac{\alpha}{d}}
\int_Q |\MC{V}_Q ^\Phi  U(x) ^\frac{1}{q} {g}(x)| \, dx.
\end{align*}

Fix a cube $P\in \Ss^k$; then we can estimate the inner most sum:
\begin{align*}
& \sum_{\substack{Q \in \MC{Q}^k \\ Q \subset P}} |Q| ^{ \frac{\alpha}{d}} \int_Q |\MC{V}_Q ^\Phi U(x) ^\frac{1}{q} {g}(x)| \, dx\\
 & \qquad\qquad  \leq \sum_{\substack{Q \in \D \\ Q \subset P}} |Q| ^{
  \frac{\alpha}{d}} \int_Q |\MC{V}_Q ^\Phi U(x) ^\frac{1}{q} {g}(x)|
  \, dx \\
&\qquad\qquad = \sum_{j = 0}^\infty \sum_{\substack{Q \subset P \\
  \ell(Q) = 2^{-j} \ell(P)}} |Q| ^{ \frac{\alpha}{d}} \int_Q |\MC{V}_Q
  ^\Phi  U(x) ^\frac{1}{q} {g}(x)| \, dx \\
&\qquad\qquad = |P|^\frac{\alpha}{d} \sum_{j = 0}^\infty 2^{-j\alpha} \sum_{\substack{Q \subset P \\ \ell(Q) = 2^{-j} \ell(P)}}  \int_Q |\MC{V}_Q ^\Phi  U(x) ^\frac{1}{q} {g}(x)| \, dx
\\
&\qquad\qquad \lesssim |P|^\frac{\beta}{d}  \int_P  {N}_P (x) |
  {g}(x)| \, dx,
\end{align*}
where $\frac{\beta}{d} = \frac1p - \frac1q$, $\beta\geq 0$ by our
hypotheses, and $N_P$ is defined by~\eqref{eqn:NQ-defn}.

If we insert this estimate into the above inequality, then by the
generalized H\"{o}lder inequality~\eqref{eqn:gen-holder} and Lemma
\ref{NQOrliczLem},
  \begin{align*}
&  \sum_k a^{k + 1} \sum_{P \in \MC{S}^k} |P|^\frac{\beta}{d}  \int_P
  {N}_P (x) |   {g}(x)| \, dx  \\
& \qquad \qquad \leq a \sum_k  \sum_{P \in \MC{S}^k} |P|  (
  |P|^\frac{\beta}{d} \|{f}\|_{\bar{\Phi}, P} ) \left(
  \avgint_{P} {N}_P (x) |   {g}(x)| \, dx\right) \\
& \qquad \qquad\leq a \sum_k  \sum_{P \in \MC{S}^k} |P|  (
  |P|^\frac{\beta}{d} \|{f}\|_{\bar{\Phi}, P} ) (\| {N}_P
  \|_{\Psi, P}     \|{g}\|_{\bar{\Psi}, P})  \\
&\qquad \qquad\leq  a \sum_k  \sum_{P \in \MC{S}^k} |P|  \, \inf_{x \in P}
  M^\beta _{\bar{\Phi}}{f}(x)  M_{\bar{\Psi}} {g} (x).
  \end{align*}

For each  $Q \in \MC{S}$, define
 \begin{equation*}
E_Q = Q \backslash \bigcup_{\substack{Q' \in \MC{S} \\ Q' \subsetneq
    Q}} Q'.
\end{equation*}
Then by~\cite[Proposition~A.1]{MR2797562}, the sets $E_Q$ are pairwise
disjoint and $|E_Q|\geq \frac{1}{2}|Q|$.    Given this, we can
continue the above estimate:
\begin{align*}
\sum_k  \sum_{P \in \MC{S}^k} &  |P|  \, \inf_{x \in P}M^\beta _{\bar{\Phi}}{f}(x)  M_{\bar{\Psi}} {g} (x)   \\
& \leq 2   \sum_{Q \in \MC{S}} |E_Q|  \, \inf_{x \in Q}M^\beta
  _{\bar{\Phi}}{f}(x)  M_{\bar{\Psi}} {g} (x)    \\
 & \leq 2   \sum_{Q \in \MC{S}} \int_{E_Q} M^\beta _{\bar{\Phi}}f
   \, M_{\bar{\Psi}}g \, dx
    \\ & \leq 2 \int_{\mathbb{R}^d} M^\beta _{\bar{\Phi}}  {f} \, M_{\bar{\Psi}} {g} \, dx
    \\ & \leq 2  \|M_{\bar{\Phi}} ^\beta{f}\|_{L^q} \|M_{\bar{\Psi}} {g}\|_{L^{q'}}
     \\ & \lesssim \|{f}\|_{L^{p}} \|{g}\|_{L^{q'}}.
\end{align*}
The last inequality follows from~\eqref{eqn:frac-orliz-max}.  If we
combine  all of the above inequalities, we get the desired
result.
\end{proof}

\section{Proof of Theorem~\ref{thm:CZO} and
  Corollary~\ref{SharpSparseCor}}
\label{section:CZO}

Throughout this section, for brevity we will write
$\Uu_Q^\Psi=\Uu_Q^{p,\Psi}$ and $\Vv_Q^\Phi=\Vv_Q^{p,\Phi}$.
In order to prove our results about Calder\'on-Zygmund operators we
introduce the concept of sparse operators.  For complete details,
see~\cite{CruzUribe:2016ji}.
Given a dyadic grid $\Dd$,  a set $\Ss \subset \D$ is sparse if for
each cube $Q\in \Ss$, there exists a set $E_Q\subset Q$ such that
$|E_Q| \geq \frac{1}{2}|Q|$ and the collection of sets $\{E_Q\}$ is
pairwise disjoint.    Define the dyadic sparse operator $T^\Ss_\alpha$ by
\[ T^\Ss_\alpha f(x) = \sum_{Q\in \Ss} |Q|^{\frac{\alpha}{d}}\avgint_Q f(y)\,dy \cdot \chi_Q(x). \]
Note that in the proof of Theorem~\ref{thm:mainint} the set of cubes
$\Ss$ is sparse, and the sums being approximated can be viewed as the
integrals of sparse operators.  By modifying this proof we can
prove the following result.

\begin{theorem} \label{thm:sparse}
Given $0\leq \alpha <d$ and  $1<p\leq q<\infty$ such that
$\frac{1}{p}-\frac{1}{q}\leq \frac{\alpha}{d}$, suppose that $\Phi$ and
  $\Psi$ are Young functions with $\bar{\Phi}\in B_{p,q}$ and
  $\bar{\Psi}\in B_{q'}$.  If $(U,V)$ is a pair of matrix weights
  satisfy the bump condition~\eqref{matrixbump2}, then
  $T_\alpha^\Ss : L^p(V) \rightarrow L^q(U)$.
\end{theorem}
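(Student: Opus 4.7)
The plan is to mirror the bilinear argument from the proof of Theorem~\ref{thm:mainint}, which already contains the essential sparse-operator machinery; since $T_\alpha^\Ss$ is intrinsically dyadic, the initial dyadic approximation step via Proposition~\ref{dyadic} (needed there for $I_\alpha$) is unnecessary. Writing $\Vv_Q^\Phi = \Vv_Q^{p,\Phi}$ throughout, by the matrix-to-vector reformulation and a duality/density argument it suffices to show that for bounded, compactly supported $f$ and $g$,
\begin{equation*}
\left|\ip{U^{\frac{1}{q}} T_\alpha^\Ss V^{-\frac{1}{p}} f}{g}_{L^2}\right| \lesssim \|f\|_{L^p(\R^d,\Cn)} \|g\|_{L^{q'}(\R^d,\Cn)}.
\end{equation*}
Expanding $T_\alpha^\Ss$ by definition dominates the bilinear form by
\begin{equation*}
\sum_{Q \in \Ss} |Q|^{\frac{\alpha}{d}} \int_Q \dashint_Q \left|\ip{V^{-\frac{1}{p}}(y) f(y)}{U^{\frac{1}{q}}(x) g(x)}_{\Cn}\right|\,dy\,dx.
\end{equation*}

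The first step is to insert the reducing operator via the self-adjoint identity $\ip{V^{-\frac{1}{p}} f}{U^{\frac{1}{q}} g} = \ip{(\Vv_Q^\Phi)^{-1} V^{-\frac{1}{p}} f}{\Vv_Q^\Phi U^{\frac{1}{q}} g}$, apply Cauchy--Schwarz pointwise in $\Cn$, then apply the generalized H\"older inequality~\eqref{eqn:gen-holder} to the $y$-average in the $\Phi, \bar{\Phi}$ scale, using $\|(\Vv_Q^\Phi)^{-1} V^{-\frac{1}{p}}\|_{\Phi,Q} \lesssim 1$ from Proposition~\ref{prop:reducing-norm}. This reduces the sum to $\sum_{Q \in \Ss} |Q|^{\frac{\alpha}{d}} \|f\|_{\bar{\Phi}, Q} \int_Q |\Vv_Q^\Phi U^{\frac{1}{q}}(x) g(x)|\,dx$. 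Since $\Ss$ is contained in some dyadic grid $\D$, this sum is dominated by the corresponding sum over $Q \in \D$, and from here the argument from the proof of Theorem~\ref{thm:mainint} applies: partition $\D$ by the level sets $\MC{Q}^k = \{Q \in \D : a^k < \|f\|_{\bar{\Phi},Q} \leq a^{k+1}\}$ with principal cubes $\MC{P}^k$ maximal for $\|f\|_{\bar{\Phi},Q} > a^k$, telescope the inner sum over $\D(P)$ geometrically into $|P|^{\beta/d} \int_P N_P(x) |g(x)|\,dx$ with $\beta/d = 1/p - 1/q \geq 0$, and apply generalized H\"older in the $\Psi, \bar{\Psi}$ scale together with Lemma~\ref{NQOrliczLem} and the sparsity property $|E_P| \geq \tfrac{1}{2}|P|$ to collapse the total bound to
\begin{equation*}
\lesssim \int_{\R^d} M_{\bar{\Phi}}^\beta f(x) \cdot M_{\bar{\Psi}} g(x)\,dx \lesssim \|f\|_{L^p} \|g\|_{L^{q'}},
\end{equation*}
where the last inequality uses \eqref{eqn:frac-orliz-max} ($\bar{\Phi} \in B_{p,q}$) and the classical bound $M_{\bar{\Psi}} : L^{q'} \to L^{q'}$ ($\bar{\Psi} \in B_{q'}$).

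The only substantive obstacle is the telescoping estimate for the inner dyadic sum over $\D(P)$, which uses the geometric decay $\sum_j 2^{-j\alpha}$ and therefore requires $\alpha > 0$; fortunately this estimate is already carried out in the proof of Theorem~\ref{thm:mainint}. In the endpoint case $\alpha = 0$ (which forces $p = q$), the telescope degenerates, but the argument can be completed directly: bypass $N_P$ altogether, apply Orlicz H\"older in the $\Psi, \bar{\Psi}$ scale to the $x$-integral using~\eqref{matrixbump2} together with Proposition~\ref{prop:reducing-norm} to bound $\|\Vv_Q^\Phi U^{\frac{1}{p}}\|_{\Psi,Q}$ uniformly in $Q$, then use $|E_Q| \geq \tfrac{1}{2}|Q|$ together with the disjointness of $\{E_Q\}_{Q \in \Ss}$ to collapse the sum to $\int M_{\bar{\Phi}} f \cdot M_{\bar{\Psi}} g$, which is controlled by $\bar{\Phi} \in B_p$ and $\bar{\Psi} \in B_{p'}$. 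In either case no fundamentally new techniques are required beyond those already developed in Sections~\ref{section:proof-mainmax} and~\ref{section:proof-mainint}.
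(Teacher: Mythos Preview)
Your proposal is correct, but it takes a detour that the paper avoids. For $\alpha>0$ you enlarge the sum over $\Ss$ to all of $\D$, thereby discarding the sparseness of $\Ss$, and then reconstruct a sparse family from scratch via the stopping-time/$N_P$ machinery of Theorem~\ref{thm:mainint}. This works, but it is precisely this reconstruction step that forced $\alpha>0$ in the first place (through the geometric factor $\sum_j 2^{-j\alpha}$ in the telescoping estimate), which is why you then need a separate argument for $\alpha=0$.

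The paper's proof is essentially your own $\alpha=0$ argument, applied uniformly for all $\alpha\ge 0$. Rather than inserting only $\Vv_Q^\Phi$, one inserts \emph{both} reducing operators $\Vv_Q^\Phi$ and $\Uu_Q^\Psi$ at once, so that the bump condition $|Q|^{\frac{\alpha}{d}+\frac{1}{q}-\frac{1}{p}}|\Vv_Q^\Phi\Uu_Q^\Psi|_\op$ factors out directly via~\eqref{eqn:mainint-alt}. What remains is
\[
\sum_{Q\in\Ss}|Q|\Big(|Q|^{\frac{\beta}{d}}\dashint_Q|(\Vv_Q^\Phi)^{-1}V^{-\frac{1}{p}}f|\Big)\Big(\dashint_Q|(\Uu_Q^\Psi)^{-1}U^{\frac{1}{q}}g|\Big),
\]
with $\frac{\beta}{d}=\frac{1}{p}-\frac{1}{q}$; each average is controlled by $\|f\|_{\bar{\Phi},Q}$ and $\|g\|_{\bar{\Psi},Q}$ exactly as in Lemma~\ref{lemma:auxiliary-max}, and then the given sparseness $|Q|\le 2|E_Q|$ collapses the sum to $\int M_{\bar{\Phi}}^\beta f\cdot M_{\bar{\Psi}}g$. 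No level-set decomposition, no $N_P$, and no case split on $\alpha$ are needed. In short: since $\Ss$ is already sparse, there is no need to rebuild it.
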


\begin{remark}
In the one weight case, a quantitative version of
Theorem~\ref{thm:sparse} was proved
in~\cite{Bickel:2015ub,Isralowitz:2015uj} when $p=q=2$ and
$\alpha=0$.
\end{remark}

\begin{proof}
The proof is virtually identical to the proof of
Theorem~\ref{thm:mainint} above, except that, since we start an
operator defined over a sparse family $\Ss$, we may omit the argument
used to construct the set $\Ss$.  This was the only part of the proof
of Theorem~\ref{thm:mainint} where we used the assumption that
$\alpha>0$; everywhere else in the proof we may take $\alpha=0$.

Because of these similarities, we only sketch the main steps:
\begin{align*}
& \left|\ip{U ^\frac{1}{q} T^\Ss_\alpha V^{-\frac1p} {f}}{{g}}_{L^2}
  \right| \\
& \qquad \qquad \leq \sum_{Q\in\Ss} |Q|^{1+\frac{\alpha}{d}} \avgint_Q \avgint_Q
\bigg|\bigg\langle
  V(y)^{-\frac{1}{p}}(y)f(y),U(x)^{\frac{1}{q}}(x)g(x)\bigg\rangle_{\Cn}\bigg|\,dx\, dy  \\
& \qquad \qquad  \leq \sup_{{Q} }
  |{Q}|^{\frac{\alpha}{d}+\frac{1}{q}-\frac{1}{p}}
|\MC{V}_{{Q}} ^\Phi \MC{U}_{{Q}} ^\Psi|_{\op} \\
& \qquad \qquad \qquad \times
\sum_{Q \in \Ss} |E_Q|  \left(|Q| ^ \frac{\beta}{d} \dashint_Q
  |(\MC{V}_Q ^\Phi) ^{-1} V^{-\frac{1}{p}} {f}|\,dx \right)
  \left(\dashint_Q |(\MC{U}_Q ^\Psi)^{-1} U^\frac{1}{q} {g}| \,dx \right)\\
&  \qquad \qquad
\lesssim \|M_{\bar{\Phi}} ^\beta {f}\|_{L^{q}} \|M_{\bar{\Psi}} {g}
  \|_{L^{q'}}. \\
&  \qquad \qquad
\lesssim \|{f}\|_{L^p}  \|{g}\|_{L^{q'}}.
\end{align*}
\end{proof}

We will now use Theorem~\ref{thm:sparse} with $p=q$ and $\alpha=0$ (or
more precisely, its proof) to prove Theorem~\ref{thm:CZO} and
Corollary~\ref{SharpSparseCor}.  To do so, we must first describe the
recent results of Nazarov, {\em et al.}~\cite{Nazarov:2017ufa} on
convex body domination.  Fix a cube $Q$ and a $\Cn$ valued function
$f \in L^1(Q)$.  Define
\begin{equation*}
\lla f \rra_Q = \left\{ \dashint_Q \varphi f\,dx  :  \varphi : Q \rightarrow \R, \, \|\varphi\|_{L^\infty(Q)}
    \leq 1\right\};
\end{equation*}
Then $\lla f \rra_Q$ is a symmetric, convex, compact set in $\C^n$.
If $T$ is a CZO (or a Haar shift or a
paraproduct) then for $f \in L^1(Q)$, $Tf$ is dominated by a
sparse convex body operator.  More precisely,  there exists a sparse collection
$\Ss$ such that for some constant $C$ independent of $f$, and
a.e. $x\in \Rd$,
\begin{equation}  \label{eqn:convex-domination}
Tf (x) \in C\sum_{Q \in \Ss} \lla f
  \rra_Q \chi_Q (x),
\end{equation}
where the sum is an infinite Minkowski sum of convex bodies.

As a consequence of this fact, to prove norm inequalities for a CZO,
it is enough to prove uniform estimates for the generalized sparse
operators of the form
\[ T^\Ss f(x) = \sum_{Q\in \Ss}  \avgint_Q
  \varphi_Q (x, y)f(y)\,dy, \]
where for each $Q$, $\varphi_Q$ is a real valued function
supported on $Q$ as a function of $y$ and such that for each $x$,  $\|\varphi_Q(x, \cdot)\|_\infty \leq 1$.  Note that it is not clear from ~\cite{Nazarov:2017ufa} whether $\varphi_Q(x, y)$ can be chosen as a measurable function of $x$, though this is not important for us (and is unlikely to be important for the further study of matrix weighted norm inequalities.)  

\begin{proof}[Proof of Theorem~\ref{thm:CZO}]
Since \begin{align*} \bigg| &  \ip{U^\frac{1}{p} T V^{-\frac{1}{p}} f (x)}{g(x)} _{\Cn}\bigg| \\  & = \left|\sum_{Q \in \Ss}  \chi_Q (x)  \ip{ \MC{U}_{{Q}} ^\Psi  \MC{V}_{{Q}} ^\Phi  \dashint_Q  \varphi_Q (x, \cdot)  (\MC{V}_Q ^\Phi)^{-1} V^{-\frac{1}{p}}  f }{(\MC{U}_{{Q}} ^\Psi )^{-1} U^\frac{1}{p} (x) g(x)}_{\Cn}   \right| \\ & \lesssim
 \sup_{\W{Q} }  |\MC{V}_{\W{Q}} ^{\Phi}  \MC{U}_{\W{Q}} ^\Psi|_{\op} \sum_{Q \in \Ss} \left(\dashint_Q  | (\MC{V}_Q ^p )^{-1} V^\frac{1}{p} (x)   f| \right)  \left(\chi_Q (x) |(\MC{U}_{{Q}} ^\Psi)^{-1} U^{-\frac{1}{p}}g(x)|\right) \end{align*} the proof now continues exactly as in the proof of Theorem~\ref{thm:sparse}.

\end{proof}

To prove Corollary~\ref{SharpSparseCor} we first need a few additional
facts about scalar weights and Orlicz maximal operators due to Hyt\"onen and
P\'erez.    We say that a weight
$w\in A_\infty$ if it satisfies the Fujii-Wilson condition
\[ [w]_{A_\infty} = \sup_Q \frac{1}{w(Q)}\int_Q M(w\chi_Q)(x)\,dx <
  \infty. \]
(There are several other definitions of the $A_\infty$ condition:
see~\cite{MR3473651}.  This definition, which seems to yield the
smallest constant, has proved to be the right choice in the study of
sharp constant inequalities for CZOs.)  In~\cite{MR3092729} they
showed that if $w\in A_\infty$, then it satisfies a sharp reverse
H\"older inequality: for any cube $Q$, $w\in RH_s$:  i.e.,
\[ \left(\avgint_Q w^s\,dx\right)^{\frac{1}{s}}
\leq 2\avgint_Q w\,dx, \]
where $s= 1+ \frac{1}{2^{d+11}[w]_{A_\infty}}$.

They also proved a
quantitative version of inequality~\eqref{eqn:frac-orliz-max}:
in~\cite{MR3327006} they showed that given a Young function $\Phi$,
\[ \|M_{\bar{\Phi}}\|_{L^p}
\leq c(n) \left(\int_{\bar{\Phi}(1)}^\infty
\left(\frac{t}{\Phi(t)}\right)^p \,d\Phi(t) \right)^{\frac{1}{p}}. \]
In particular, if we let $\Phi(t)=t^{rp'}$, $r>1$, then a
straightforward computation shows that
\begin{equation} \label{eqn:sharp-max}
 \|M_{\bar{\Phi}}\|_{L^p} \lesssim (r')^{\frac{1}{p}}.
\end{equation}

\begin{proof}[Proof of Corollary~\ref{SharpSparseCor}]
By the argument in the proof of Theorem~\ref{thm:CZO} it is enough to
prove this estimate for sparse operators.    Fix a dyadic grid $\D$
and a sparse set $\Ss\subset \D$ and let $W$ be a matrix $A_p$
weight. As we noted in the introduction, for every $e\in \C^n$,
$|W^{\frac{1}{p}}e|$ is a scalar $A_p$ weight with uniformly bounded
constant~\cite[Corollary~2.2]{G}.  Using the Fujii-Wilson condition, we define
\[ [W]_{A_{p,\infty}^{\sca} }
 = \sup_{e \in \Cn} [|W^{\frac{1}{p}}e|^p ]_{A_\infty}. \]
By the sharp reverse H\"older inequality, if we let
\begin{equation} \label{eqn:rh-exp}
s = 1 + \frac{1}{2^{d+11} [W]_{A_{p,\infty}^{\sca}}}, \qquad
r =  1 + \frac{1}{2^{d+11}
  [W^{-\frac{p'}{p}}]_{A_{p',\infty}^{\sca}}},
\end{equation}
then for every $e\in \C^n$ , $|W^{\frac{1}{p}}e|^p\in RH_s$ and
$|W^{-\frac{1}{p}}e|^{p'}\in RH_r$.

Define $\Psi(t)=t^{sp}$ and $\Phi(t)=t^{rp'}$.   Then $\bar{\Psi}\in
B_{p'}$ and $\bar{\Phi}\in B_p$.  Moreover, we claim that
\[  [W,W]_{p,\Phi,\Psi} \lesssim [W]_{A_p}^{\frac{1}{p}}.  \]
To see this, we argue as in the proof of
Proposition~\ref{prop:reducing-norm}.  Let $\{e_j\}_{j=1}^n$ be an
orthonormal basis in $\C^n$.  Then by~\eqref{eqn:CZO-alt} (with $p=q$
and $U=V=W$), and the reverse H\"older inequality,
\begin{align*}
[W,W]_{p,\Phi,\Psi}
& \approx \sup_Q |\Uu_Q^\Psi\Vv_Q^\Phi|_\op \\
& \approx \sup_Q \sum_{j=1}^n
\left(\avgint_Q |W^{-\frac{1}{p}}(x)\Uu_Q^\Psi
  e_j|^{rp'}\,dx\right)^{\frac{1}{rp'}} \\
& \leq 2\sup_Q \sum_{j=1}^n
\left(\avgint_Q |W^{-\frac{1}{p}}(x)\Uu_Q^\Psi
  e_j|^{p'}\,dx\right)^{\frac{1}{p'}} \\
& \lesssim \sup_Q |\Uu_Q^\Psi \Vv_Q^{p'}|_\op.
\end{align*}
If we repeat this argument again, exchanging the roles of $\Uu$ and
$\Vv$, we get that
\begin{equation} \label{eqn:Ap-cond}
 [W,W]_{p,\Phi,\Psi} \lesssim \sup_Q |\Uu_Q^\Psi \Vv_Q^{p'}|_\op
\lesssim \sup_Q |\Uu_Q^p\Vv_Q^{p'}|_\op
\lesssim [W]_{A_p}^{\frac{1}{p}}.
\end{equation}

Therefore, we can apply Theorem~\ref{thm:CZO} with the pair of weights
$(W,W)$.  A close examination of the proof of this result (i.e., the
proof of Theorem~\ref{thm:sparse}) shows that
\[ \|T\|_{L^p(W)} \lesssim [W,W]_{p,\Phi,\Psi}
  \|M_{\bar{\Phi}}\|_{L^p} \|M_{\bar{\Psi}}\|_{L^{p'}}. \]
But by~\eqref{eqn:Ap-cond} and by~\eqref{eqn:sharp-max} combined
with~\eqref{eqn:rh-exp} we get
\[ [W,W]_{p,\Phi,\Psi}
  \|M_{\bar{\Phi}}\|_{L^p} \|M_{\bar{\Psi}}\|_{L^{p'}}
\lesssim [W]_{A_p}^{\frac{1}{p}}
 [W]_{A_{p,\infty}^{\sca}}^{\frac{1}{p'}}
 [W^{-\frac{p'}{p}}]_{A_{p',\infty}^{\sca}}^{\frac{1}{p}}. \]
This gives us the first estimate in Corollary~\ref{SharpSparseCor};
the second follows from this one, Lemma~\ref{lemma:matrix-dual} and
the fact that
\[ [W]_{A_{p,\infty}^{\sca}} \leq \sup_{e\in \C^n}[|W^{\frac{1}{p}}e|^p]_{A_p} \leq
  [W]_{A_p};
\]
see~~\cite[Corollary~2.2]{G}.
\end{proof}

 \begin{remark}
In~\cite{Nazarov:2017ufa} they proved that the sparse matrix
domination inequality~\eqref{eqn:convex-domination} holds if $T$ is a
Haar shift or a paraproduct.  Consequently, Theorem~\ref{thm:CZO} and
Corollary~\ref{SharpSparseCor} hold for these operators.
Additionally, they proved a slightly stronger result when $p=2$,
assuming that a pair of matrix weights $[U,V]$ satisfy the two weight
$A_p$ condition, and each of $U$ and $V$ satisfy the appropriate
scalar $A_\infty$ condition.  We can immediately extend our proofs to
give the analog of this result for all $1<p<\infty$.  Details are left
to the interested reader.
\end{remark}

\section{Proof of Theorems~\ref{theorem:avg-op}
  and~\ref{thm:mainweak}}
\label{section:characterization}

For brevity, in this section if $\alpha=0$ we will write $A_{p,q}=A_{p,q}^0$; if $p=q$ we will write $A_{p}^\alpha$ or $A_p$ if $\alpha=0$.

\begin{proof}[Proof of Theorem~\ref{theorem:avg-op}]
  We first prove the sufficiency of the $A_{p,q}^\alpha$ condition.
  When $p>1$   we estimate  using
  H\"older's inequality and~\eqref{eqn:Apq-alt}:
\begin{align*}
& \|A_\Q f\|_{L^q(U)}^q  \\
& \qquad = \int_{\Rd} |U^{\frac{1}{q}}(x)A_\Q f(x)|^q\,dx \\
& \qquad  = \int_{\Rd} \bigg| \sum_{Q\in \Q}
|Q|^{\frac{\alpha}{d}}\avgint_Q \chi_Q(x) U^{\frac{1}{q}}(x)
V^{-\frac{1}{p}}(y) V^{\frac{1}{p}}(y)f(y)\,dy\bigg|^q\,dx \\
& \qquad  \leq \int_{\Rd} \sum _{Q\in \Q} |Q|^{\frac{q\alpha}{d}}\chi_Q(x)
\bigg(\avgint_Q |U^{\frac{1}{q}}(x) V^{-\frac{1}{p}}(y)|_\op^{p'}\,dy\bigg)^{\frac{q}{p'}}
\bigg(\avgint_Q |V^{\frac{1}{p}}(y)f(y)|^p\,dy\bigg) ^{\frac{q}{p}}\,dx \\
& \qquad  = \sum _{Q\in \Q} |Q|^{q\frac{\alpha}{d}+1-\frac{q}{p}}\avgint_Q \bigg(\avgint_Q |U^{\frac{1}{q}}(x) V^{-\frac{1}{p}}(y)|_\op^{p'}\,dy\bigg)^{\frac{q}{p'}}\,dx
\bigg(\int_Q |V^{\frac{1}{p}}(y)f(y)|^p\,dy\bigg) ^{\frac{q}{p}} \\
& \qquad  \lesssim [U,V]_{A_{p,q}^\alpha}^q \bigg(\sum _{Q\in \Q} \int_Q |V^{\frac{1}{p}}(y)f(y)|^p\,dy\bigg) ^{\frac{q}{p}} \\
& \qquad \leq [U,V]_{A_{p,q}^\alpha}^q  \|f\|_{L^p(V)}^q.
\end{align*}

When $p=1$ we can argue as above, except that instead of H\"older's
inequality we use Fubini's theorem and~\eqref{eqn:A1q-alt}.

\medskip

To prove necessity when $p>1$, fix a cube $Q$ and let $e\in \Cn$ be
such that $|e|=1$.  Then, assuming
averaging operators are uniformly bounded with norm at most $K$, we
have by duality that there exists $g\in L^p(V)$, $\|g\|_{L^p(V)}=1$,
such that
\begin{align*}
|\Vv^{p'}_Q\Uu^q_Qe|
& \approx \left(\avgint_Q |V^{-\frac{1}{p}}(y) \Uu^q_Qe|^{p'}\,dy\right)^{\frac{1}{p'}} \\
& = |Q|^{- \frac{1}{p'}}\|\chi_Q\Uu^q_Qe\|_{L^{p'}(V^{-\frac{p'}{p}})} \\
& = |Q|^{- \frac{1}{p'}}\int_Q \ip{\Uu^q_Qe}{g(x)}_{\Cn} \,dx \\
& = |Q|^{\frac{1}{p}}\ip{e}{\Uu_Q^q\avgint_Q g(x) \,dx }_{\Cn} \\
& \leq |Q|^{\frac{1}{p}}\left|\Uu_Q^q\avgint_Q g(x)\,dx \right|\\
& \approx |Q|^{\frac{1}{p}} \left(\avgint_Q |U^{\frac{1}{q}}(y)A_Qg(y)|^q\,dy\right)^{\frac{1}{q}} \\
& = |Q|^{\frac{1}{p}-\frac{1}{q}-\frac{\alpha}{d}} \|A_Q^\alpha g\|_{L^q(U)} \\
& \leq K |Q|^{\frac{1}{p}-\frac{1}{q}-\frac{\alpha}{d}} \|g\|_{L^p(V)}\\
& \leq K |Q|^{\frac{1}{p}-\frac{1}{q}-\frac{\alpha}{d}}.
\end{align*}
If we now rearrange terms and take the supremum over all $Q$ we get that
\[ \sup_Q |Q|^{\frac{\alpha}{d}+\frac{1}{q}-\frac{1}{p}}|\Uu_Q^q\Vv_Q^{p'}|_\op
 = \sup |Q|^{\frac{\alpha}{d}+\frac{1}{q}-\frac{1}{p}}|\Vv_Q^{p'}\Uu_Q^q|_\op \lesssim K, \]
and so $(U,V)\in A_{p,q}^\alpha$.

When $p=1$ we cannot use duality, so we argue as follows.  Since $A_Q$
is linear,  given $f \in L^1(Q)$ we can rewrite our assumption to get
 \begin{equation*}
\|A_Q f\|_{L^q(Q)} =
\bigg(\int_Q \left| |Q|^{\frac{\alpha}{d}} \dashint_Q
    U^{\frac{1}{q}} (x) V^{-1} (y) {f}(y) \, dy \right|^q \, dx
\bigg)^{\frac{1}{q}}
\leq C   \|{f}\|_{L^1 (Q)}.
\end{equation*}
Therefore, given any  $S \subseteq Q$ with
$|S| > 0$, if we let ${f} (x) = \chi_S (x){e} $, where $e\in \C^n$ and
$|e|=1$, then
 \begin{equation*}
|S| |Q| ^{\frac{1}{q} - 1 +  \frac{\alpha}{d}}
\bigg(\dashint_Q \left|U^\frac{1}{q} (x)
    \left(\dashint_S V^{-1}(y) {e}\,dy \right) \right|^q \,
  dx\bigg)^{\frac{1}{q}}
 \leq   K|S|.
\end{equation*}
Thus, by the definition of $\MC{V}_Q ^q$ we
get that
\begin{equation*}
|Q| ^{\frac{1}{q} - 1 +  \frac{\alpha}{d}}
\left| \MC{U}_Q ^q \left(\dashint_S  V^{-1}(y) {e}\,dy\right)  \right|
\lesssim K.
\end{equation*}
But then by the Lebesgue differentiation theorem it follows that
\begin{equation*}
|Q| ^{\frac{\alpha}{d} + \frac{1}{q} -  1 } \esssup_{y\in Q} \left|
  \MC{U}_Q ^q  V^{-1} (y)  \right|_{\op}
 \lesssim K.
\end{equation*}
By~\eqref{eqn:reducing-norm2} it follows that $(U,V)\in
A_{p,q}^\alpha$.
\end{proof}

\medskip

As a corollary to Theorem~\ref{theorem:avg-op} we have the uniform
boundedness of convolution operators and the convergence of
approximate identities.

\begin{corollary} \label{cor:convolution-op}
Given $1\leq p<\infty$ and a pair of matrix weights $(U,V)$ in $A_p$, let $\varphi \in C_c^\infty(B(0,1))$ be a non-negative, radially symmetric and decreasing function with $\|\varphi\|_1=1$, and for $t>0$ let $\varphi_t(x)=t^{-n}\varphi(x/t)$.  Then
\[ \sup_{t>0} \|\varphi_t * f\|_{L^p(U)}
\leq C\|f\|_{L^p(V)}. \]
Moreover, we have that
\[ \lim_{t\rightarrow 0} \|\varphi_t*f -f\|_{L^p(U)} = 0. \]
\end{corollary}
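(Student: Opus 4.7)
My plan is to first establish the uniform bound by pointwise domination of the convolution by averaging operators and invocation of Theorem~\ref{theorem:avg-op}, and then derive the $L^p(U)$-convergence via a density argument based on smooth compactly supported approximations.

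For the uniform bound: since $\varphi$ is bounded, non-negative, and supported in $B(0,1)$, I have the pointwise estimate
$$|\varphi_t * f(x)| \lesssim t^{-d} \int_{B(x,t)} |f(y)|\,dy \lesssim \avgint_{B(x,t)} |f(y)|\,dy.$$
Fix the grid $\Q_t$ of cubes of side length $t$. For $x$ in a cube $Q \in \Q_t$, $B(x,t)$ is contained in the tripled cube $3Q$, so $|\varphi_t * f(x)| \lesssim \avgint_{3Q} |f|$. Splitting $\Q_t$ into finitely many (depending only on $d$) subfamilies $\Q^{(1)}_t, \ldots, \Q^{(N)}_t$ along which the tripled cubes are pairwise disjoint, I can dominate
$$|\varphi_t * f(x)| \lesssim \sum_{i=1}^N A_{\widetilde{\Q}^{(i)}_t}^0 |f|(x), \qquad \widetilde{\Q}^{(i)}_t = \{3Q : Q \in \Q^{(i)}_t\}.$$
Applying Theorem~\ref{theorem:avg-op} with $\alpha = 0$ and $p = q$ to each disjoint family and summing yields the uniform bound $\sup_{t > 0} \|\varphi_t * f\|_{L^p(U)} \lesssim [U,V]_{A_p}\|f\|_{L^p(V)}$.

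For the convergence, I first handle $g \in C_c^\infty(\R^d, \Cn)$: both $g$ and $\varphi_t * g$ are supported in a fixed compact set $K$ for $t < 1$, and $\varphi_t * g \to g$ uniformly on $K$. Since $|U|_{\op}$ is locally integrable, dominated convergence gives $\|\varphi_t * g - g\|_{L^p(U)} \to 0$. For general $f \in L^p(V)$, using density of $C_c^\infty$ in $L^p(V)$ (which follows from local integrability of $V^{1-p'}$ under $A_p$), given $\epsilon > 0$ I pick $g \in C_c^\infty$ with $\|f - g\|_{L^p(V)} < \epsilon$. The uniform bound and triangle inequality then yield
$$\|\varphi_t * f - \varphi_s * f\|_{L^p(U)} \lesssim \epsilon + \|\varphi_t * g - \varphi_s * g\|_{L^p(U)},$$
which tends to $O(\epsilon)$ as $s,t\to 0$. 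Hence $\{\varphi_t * f\}_{t > 0}$ is Cauchy in $L^p(U)$ and converges to some $h \in L^p(U)$.

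The main obstacle I anticipate is that the statement implicitly requires $f \in L^p(U)$ for the conclusion to be meaningful, while $f \in L^p(V)$ need not a priori lie in $L^p(U)$. I plan to resolve this by identifying the $L^p(U)$-limit $h$ with $f$ via a.e.\ pointwise convergence of approximate identities along a subsequence (valid since $f \in L^1_{\loc}$ as a consequence of $A_p$), from which $f = h$ a.e., so $f \in L^p(U)$ and $\varphi_t * f \to f$ in $L^p(U)$ as desired.
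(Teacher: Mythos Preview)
Your overall strategy—dominate $\varphi_t * f$ by averaging operators and invoke Theorem~\ref{theorem:avg-op}, then run a density argument—matches the paper's. However, the pointwise step for the uniform bound has a genuine gap in the matrix setting. Your scalar inequality $|\varphi_t * f(x)| \lesssim \sum_i A^0_{\widetilde\Q^{(i)}_t} |f|(x)$ controls only the Euclidean length of the vector $\varphi_t * f(x)$, whereas what enters $\|\varphi_t*f\|_{L^p(U)}$ is $|U^{1/p}(x)(\varphi_t * f)(x)|$. Passing from one to the other costs a factor $|U^{1/p}(x)|_\op$, and then Theorem~\ref{theorem:avg-op} applied to the scalar $|f|$ (e.g.\ to $|f|e_j$ and summing over $j$) yields at best
\[
\|\varphi_t * f\|_{L^p(U)} \lesssim \bigg(\int_{\Rd} |V^{1/p}(x)|_\op^p\,|f(x)|^p\,dx\bigg)^{1/p},
\]
which in general strictly exceeds $\|f\|_{L^p(V)}$; the inequality $|V^{1/p}(x)f(x)|\le |V^{1/p}(x)|_\op|f(x)|$ goes the wrong way. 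The remedy is to keep $U^{1/p}(x)$ inside from the start: for $x\in Q\in\Q_t$,
\[
|U^{1/p}(x)(\varphi_t * f)(x)| \le \int \varphi_t(x-y)\,|U^{1/p}(x)f(y)|\,dy \lesssim \avgint_{3Q} |U^{1/p}(x)f(y)|\,dy,
\]
after which the H\"older-plus-$A_p$ estimate in the \emph{proof} of Theorem~\ref{theorem:avg-op} (not the theorem applied to $|f|$) controls $\int_Q\big(\avgint_{3Q}|U^{1/p}(x)f(y)|\,dy\big)^p\,dx$ by $[U,V]_{A_p}^p\int_{3Q}|V^{1/p}f|^p$, and summing over the disjoint families finishes the uniform bound.

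For the convergence, your Cauchy-plus-identification argument works, but the paper takes a shorter route: it isolates the pointwise matrix inequality $|U^{1/p}(x)V^{-1/p}(x)|_\op \lesssim [U,V]_{A_p}^{1/p}$ (Proposition~\ref{PointwiseProp}), which immediately gives the embedding $\|f\|_{L^p(U)}\lesssim\|f\|_{L^p(V)}$. With $f\in L^p(U)$ known from the outset, the standard three-term triangle inequality against a $C_c^\infty$ approximant suffices, and there is no need to identify an abstract $L^p(U)$-limit a~posteriori.
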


This was proved in the one weight case in~\cite[Theorem~4.9]{CRM}.
The proof is essentially the same, bounding the convolution operator
by averaging operators and then applying Theorem~\ref{theorem:avg-op}.
Details are left to the interested reader, except for the following
result which is of independent interest.

Recall that if $(u, v)$ is a pair of scalar $A_p$weights, then it
is immediate by the Lebesgue differentiation theorem that
$u(x) \leq [u, v]_{A_p} v(x)$ a.e.    The following result is the
matrix analog.

\begin{proposition} \label{PointwiseProp} Given $1\leq p<\infty$, if
  $(U, V) \in \text{A}_p$,  then
\[ |U^{\frac{1}{p}}(x)V^{-\frac{1}{p}}(x)|_\op
\lesssim [U,  V]_{\text{A}_p} ^\frac{1}{p}.  \]
 \end{proposition}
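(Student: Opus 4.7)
The plan is to derive the pointwise bound from the integral $A_p$ condition by combining componentwise Lebesgue differentiation of the matrix functions $U^{1/p}$ and $V^{-1/p}$ with a bilinear pairing that reduces the operator norm to an inner product, and then estimating via H\"older's and Jensen's inequalities.

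For $p > 1$, I would fix unit vectors $e, f \in \Cn$. At a.e. $x \in \Rd$---specifically, at every common Lebesgue point of $U^{1/p}$ and $V^{-1/p}$---and for any family of cubes shrinking to $x$, componentwise Lebesgue differentiation gives
\[ \dashint_Q V^{-\frac{1}{p}}(y)\, e\, dy \to V^{-\frac{1}{p}}(x)\, e, \qquad \dashint_Q U^{\frac{1}{p}}(z)\, f\, dz \to U^{\frac{1}{p}}(x)\, f. \]
Pairing the two limits and using the self-adjointness of $U^{1/p}$ to transfer it onto $f$, this yields
\[ |\ip{U^{\frac{1}{p}}(x) V^{-\frac{1}{p}}(x) e}{f}| \leq \limsup_{Q \to x} \dashint_Q \dashint_Q |U^{\frac{1}{p}}(z)\, V^{-\frac{1}{p}}(y)|_\op\, dy\, dz. \]
The next step is to apply H\"older's inequality in $y$ (exponent $p'$) and then Jensen's inequality in $z$ (exponent $p$) to bound the double average by a constant times $[U,V]_{A_p}^{1/p}$, using the $A_p$ definition in~\eqref{eqn:matrix-Apq} specialized to $p = q$ and $\alpha = 0$.

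To pass from this pairing bound to an operator norm bound, I would fix a countable dense subset of the unit sphere in $\Cn$, apply the previous steps to each pair $(e, f)$ drawn from that subset, and intersect the resulting countably many full-measure sets. Continuity of the bilinear form $(e,f) \mapsto \ip{U^{1/p}(x) V^{-1/p}(x) e}{f}$ in $e$ and $f$ then extends the bound to all unit vectors on a single full-measure set, delivering the operator norm estimate.

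The case $p = 1$ would be handled separately and more directly: by~\eqref{eqn:matrixA1q}, for every $Q$ and a.e. $y \in Q$, $\dashint_Q |U(x)\, V^{-1}(y)|_\op\, dx \leq [U,V]_{A_1}$. At any such $y$ that is also a Lebesgue point of $U$, letting $Q \to y$ and applying $|\dashint_Q U(x)\, V^{-1}(y)\, dx|_\op \leq \dashint_Q |U(x)\, V^{-1}(y)|_\op\, dx$ gives $|U(y) V^{-1}(y)|_\op \leq [U,V]_{A_1}$.

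The main obstacle is a bookkeeping issue rather than an analytic one: Lebesgue differentiation is applied to vector-valued functions that depend on the choice of $e$ and $f$, so the pointwise pairing bound only holds on an $(e, f)$-dependent full-measure set. The countable-density argument circumvents this, but it is the one place where care is required. The remaining analytic content is a routine application of Jensen's inequality to the two-weight matrix $A_p$ condition.
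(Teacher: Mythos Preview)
Your proof is correct and takes a genuinely different route from the paper's argument for $p>1$.  The paper works through the reducing operators: it first replaces $U^{1/p}(x)$ by the reducing matrix $\Uu_{Q_m^x}^p$ via $|U^{1/p}(x)e|^p \lesssim \limsup_m |\Uu_{Q_m^x}^p e|^p$, then replaces $V^{-1/p}(x)$ by $\Vv_{Q_m^x}^{p'}$ in a second step, and finally invokes $|\Uu_Q^p \Vv_Q^{p'}|_\op \lesssim [U,V]_{A_p}^{1/p}$.  All the operator-norm bookkeeping is done by passing through sums over a fixed orthonormal basis, which keeps the set of vectors finite and sidesteps your countable-density step.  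Your approach bypasses reducing operators entirely: by pairing against a second unit vector $f$ and using the self-adjointness of $U^{1/p}$ you separate the two averages from the outset, land directly on the double average $\dashint_Q\dashint_Q |U^{1/p}(z)V^{-1/p}(y)|_\op\,dy\,dz$, and finish with two applications of Jensen.  This is more elementary and in fact closer in spirit to the scalar proof; the price is the density argument on the unit sphere, which you handle correctly.  For $p=1$ the paper does not give an argument at all, so your direct treatment via the $A_{1,q}$ definition supplies what the paper leaves to the reader; note only that the null set where the $\esssup_y$ bound fails depends on $Q$, so you should restrict to a countable family of cubes (e.g., rational centers and side lengths) before intersecting, exactly as you do for the unit vectors.
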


\begin{remark}
In the proof of Corollary~\ref{cor:convolution-op}, this is used to
prove that the $L^p(U)$ norm of a function is dominated by the
$L^p(V)$ norm:
\[ \|f\|_{L^p(U)}
\leq \bigg(\int_{\R^d} |U^{\frac{1}{p}}(x)V^{-\frac{1}{p}}(x)|_\op
|V^{\frac{1}{p}}(x)f(x)|\,dx\bigg)^{\frac{1}{p}}
\lesssim  [U,  V]_{\text{A}_p} ^\frac{1}{p} \|f\|_{L^p(V)}. \]
\end{remark}

\medskip

\begin{proof}
We first consider the case when $p>1$.
Since $U$ is locally integrable, we have for a.e. $x \in \Rd$
  that
\begin{equation*}
\lim_{m \rightarrow \infty} \left|U(x) -
      \dashint_{Q_m ^x} U(y)\,dy \right|_{\op}
= \lim_{m \rightarrow \infty}
    \left|U^\frac{1}{p}(x) - \dashint_{Q_m ^x}
      U^\frac{1}{p}(y)\,dy\right|_{\op} = 0,
 \end{equation*}
and that the same holds for $V, V^{-\frac{1}{p'}}, $ and the scalar function
  $|U|_{\op}$; here  $\{Q_m ^{x} \}$ is a sequence of nested cubes whose
  intersection is $\{x\}$ and whose side-length tends to zero.
Thus  by H\"{o}lder's inequality, for any
  ${e} \in \Cn$ we have
\begin{multline*} |
    U^\frac{1}{p} (x) {e}|^p
 = \lim_{m \rightarrow \infty}
    \left|\dashint_{Q_m ^x} U^\frac{1}{p}(y) {e}\,dy\right| ^p \\
\leq
    \limsup_{m \rightarrow \infty} \left(\dashint_{Q_m ^x}
      |U^\frac{1}{p}(y) {e}\,dy|\right) ^p
 \leq \limsup_{m \rightarrow
      \infty} \dashint_{Q_m ^x} |U^\frac{1}{p}(y) {e}|^p\,dy
 \approx
    \limsup_{m \rightarrow \infty} |\MC{U}_{Q_m ^x} ^p {e} |
    ^p.
  \end{multline*}

  On the other hand,
\begin{equation*}
\limsup_{m      \rightarrow \infty} |\MC{U}_{Q_m ^x} ^p |_{\op} ^p
\approx
    \limsup_{m \rightarrow \infty} \sum_{j = 1}^n \dashint_{Q_m ^x}
    |U^\frac{1}{p}(y) {e}_j|^p\,dy
\leq \limsup_{m \rightarrow \infty}
    \dashint_{Q_m ^x} |U(y)|_{\op}\,dy = |U(x)|_{\op};
\end{equation*}
 in
  particular, $\{|\MC{U}_{Q_m ^x} ^p |_{\op}\}$ is bounded.  Then we
  can argue as we did above
  above to get that for any ${e} \in \Cn$,
 \begin{equation*}
    \limsup_{m \rightarrow \infty} |V^{-\frac{1}{p}} (x) \MC{U}_{Q_m^
      x} ^p {e} |^{p'} \lesssim \limsup_{m \rightarrow \infty} |
    \MC{V}_{Q_m ^x} ^{p'} \MC{U} _{Q_m ^x} ^p {e}|^{p'}
    . \end{equation*}

 Hence, we get that
\begin{align*}
|U^\frac{1}{p} (x) V^{-\frac{1}{p}} (x)|_{\op}
& \approx \left( \sum_{j = 1}^n |U^\frac{1}{p} (x) V^{-\frac{1}{p}}
  (x){e}_j | ^p \right)^\frac{1}{p}  \\
& \lesssim \limsup_{m \rightarrow \infty} \left(\sum_{j = 1}^n |
  \MC{U}_{Q_m^ x} ^p  V^{-\frac{1}{p}} (x) {e}_j|^p\right)^\frac{1}{p}
  \\
& \approx  \limsup_{m \rightarrow \infty} |V^{-\frac{1}{p}} (x)
  \MC{U}_{Q_m^ x} ^p  |_{\op} \\
& \approx \limsup_{m \rightarrow \infty} \left(\sum_{j = 1}^n
  |V^{-\frac{1}{p}} (x) \MC{U}_{Q_m^ x} ^p {e}_j |^{p'}
  \right)^\frac{1}{p'}   \\
& \lesssim \limsup_{m \rightarrow \infty}  | \MC{V}_{Q_m ^x} ^{p'}
  \MC{U} _{Q_m ^x} ^p |_{\op} \\
& \lesssim [U, V]_{\text{A}_p} ^\frac{1}{p}.
\end{align*}
\end{proof}

\bigskip

\begin{proof}[Proof of Theorem \ref{thm:mainweak}]
 We first prove $(1)$ implies $(2)$.   Given $(U,V)\in
 A_{p,q}^\alpha$,  we will prove that
  $M_{\alpha,U,V}' : L^p\rightarrow L^{q,\infty}$.  Arguing exactly as
  we did in Section~\ref{section:proof-mainmax} using
  Proposition~\ref{dyadic}, it will suffice to fix a dyadic grid $\D$
  and prove that
  $M_{\alpha, U, V, \D} ' : L^p \rightarrow L^{q, \infty}$, where
  $M_{\alpha, U, V, \D}$ is defined as in \eqref{eqn:two-wt-auxiliary}
  but with the supremum restricted to cubes in $\D$.

Fix $\lambda > 0$ and let ${f} \in L^p(\Rd, \Cn)$.  Then for
  any cube $Q\in \D$ we have by~\eqref{eqn:Apq-alt} that
\[ {|Q| ^{ \frac{\alpha}{d}}} \avgint_{Q} |\MC{U}_{Q} ^q V^{-\frac1p} (y)
    {f} (y) | \, dy
 \lesssim |Q|^{\frac{\alpha}{d} - \frac1p}
    |\MC{U}_Q ^q \MC{V}_Q ^{p'}|_\op \|f\|_{L^p}
 \lesssim |Q|^{-\frac{1}{q}} [U,V]_{A_{p,q}^\alpha} \|f\|_{L^p}.  \]
The right-hand side tends to $0$ as $|Q| \rightarrow \infty$, so
(see~\cite[Proposition~A.7]{MR2797562}) there
exists a collection
  $\{Q_j\}$ of maximal, disjoint cubes in $\D$ such that
 \begin{equation*}
{|Q_j| ^{ \frac{\alpha}{d}}} \avgint_{Q_j} |\MC{U}_{Q_j} ^q V^{-\frac1p} (y) {f} (y) | \, dy >
    \lambda
\end{equation*}
and
 \begin{equation*}
\bigcup_j Q_j    = \{x : M_{\alpha, U, V, \D} ' {f} (x) > \lambda \}.
\end{equation*}
But then we can estimate as follows: by H\"older's inequality and the definition of $\Vv_Q^{p'}$,
\begin{align*}
& |\{x : M_{\alpha, U, V, \D} ' {f} (x) > \lambda \}| \\
& \qquad \qquad = \sum_{j} |Q_j|\\
& \qquad \qquad \leq
\frac{1}{\lambda^q} \sum_j
\left( {|Q_j| ^{ -1+\frac1q +\frac{\alpha}{d}}}
\int_{Q_j} |\MC{U}_{Q_j} ^q V(y) ^{-\frac1p }
  {f}(y)| \, dy \right)^q \\
& \qquad \qquad \leq
\frac{1}{\lambda^q} \sum_j
\left( {|Q_j| ^{ -1+\frac1q +  \frac{\alpha}{d}}} \right)^q
\left(\avgint_{Q_j} |\Uu_{Q_j}^q V(y) ^{-\frac1p  }|^{{p'}}\,dy\right)^{\frac{q}{p'}}
\left(\int_{Q_j}|f(y)|^p\,dy\right)^{\frac{q}{p}} \\
& \qquad \qquad  \lesssim \frac{1}{\lambda^q} \sum_{j}
\left( {|Q_j| ^{ -1+\frac1q +  \frac{\alpha}{d}}} \right)^q
|\MC{U}_{Q_j} ^q \MC{V}_{Q_j} ^{p'} |_\op^q \left(\int_{Q_j}
  |{f}(y)| ^p \, dy\right)^\frac{q}{p};  \\
\intertext{by~\eqref{eqn:Apq-alt},}
& \qquad \qquad \leq [U,V]_{A_{p,q}^\alpha}\lambda^{-q} \sum_j
\left(\int_{Q_j}  |{f}(y)| ^p \, dy\right)^\frac{q}{p} \\
& \qquad \qquad \leq  [U,V]_{A_{p,q}^\alpha}\lambda^{-q}
  \|f\|_{L^p}^q; \end{align*}
the last inequality holds since $q\geq p$ (so by convexity we may pull
the power outside the sum), and since the cubes $\{Q_j\}$ are disjoint.
This completes the proof that $(1)$ implies $(2)$.

\medskip

The proof that $(2)$ implies $(3)$ is immediate: given a cube
$Q$,  $B^\alpha_Q f(x) \leq M_{\alpha,U,V}'f(x)$.

\medskip

Finally, we prove that $(3)$ implies $(1)$.
It follows at once from the definition of the $L^{q,\infty}$ norm that
for any $e\in \C^n$,
$|Q|^{-\frac1q } \|\chi_Q {e} \|_{L^{q, \infty}} = |{e}|$.
First suppose that $p>1$.  Then using this identity, duality,
and~\eqref{eqn:matrix-Apq}, we have that
 \begin{align*}
\sup_Q \sup_{\|{f}\|_{L^p} = 1}
&  \left\| {\chi_Q}{|Q|^{\frac{\alpha}{d}  }} \avgint_Q \MC{U}_Q ^q V^{-\frac1p} (y) {f}(y) \, dy \right\|_{L^{q, \infty}}
\\
& = \sup_Q  \sup_{\|{f}\|_{L^p} = 1} \left| {|Q|^{\frac{\alpha}{d}
  +\frac1q  } }
\avgint_Q \MC{U}_Q ^q V^{-\frac1p} (y)
  {f}(y) \, dy \right| \\
& =  \sup_Q  \sup_{\|{f}\|_{L^p} = 1} \sup_{|{e}| = 1}
{|Q|^{\frac{\alpha}{d} +\frac1q }} \avgint_Q \ip{\MC{U}_Q
  ^q V^{-\frac1p} (y) {f}(y)}{{e}}_{\Cn}  \, dy  \\
& =
\sup_Q  \sup_{|{e}| = 1}  \sup_{\|{f}\|_{L^p} = 1}  {|Q|^{
 \frac{\alpha}{d} + \frac1q}} \avgint_Q \ip{\MC{U}_Q ^q V^{-\frac1p}
  (y) {f}(y)}{{e}}_{\Cn}  \, dy  \\
& = \sup_Q  \sup_{|{e}| = 1}  \sup_{\|{f}\|_{L^p} = 1}
  {|Q|^{\frac{\alpha}{d} +\frac1q } } \avgint_{\Rd} \ip{
  {f}(y)}{\chi_Q V^{-\frac1p} (y)\MC{U}_Q ^q {e}}_{\Cn}  \, dy  \\
& = \sup_Q  \sup_{|{e}| = 1} {|Q|^{\frac{\alpha}{d}
  +\frac1q -1 }} \| \chi_Q V^{-\frac1p }\MC{U}_Q ^q {e}\|_{L^{p'}}  \\
& \approx \sup_Q  {|Q|^{ \frac{\alpha}{d} +\frac1q - \frac1p }}
  |\MC{V}_Q ^{p'} \MC{U}_Q ^p |_\op \\
& \approx [U,V]_{A_{p,q}^\alpha}.
\end{align*}
When $p=1$ the proof is nearly the same, except that instead of using
duality to get the $L^{p'}$ norm, we take the operator norm of the
matrices and use~\eqref{eqn:A1q-alt}.   This completes the proof that
$(3)$ implies $(1)$.
\end{proof}

\section{Proof of Theorem~\ref{thm:poincare}}
\label{section:poincare}

The proof of Theorem~\ref{thm:poincare} is really a corollary of the
proof of Theorems~\ref{thm:mainint} and~\ref{thm:sparse}.  First, we will show that it will
suffice to assume $|E|<\infty$ and prove \eqref{eqn:poincare2} with the left-hand side
replaced by
\[ \frac{1}{u(E)}\int_E |f(x)-f_E|^q u(x)\,dx, \]
where $f_E = \avgint_E f(x)\,dx$.  For by H\"older's inequality,
\begin{align*}
& \int_E |f(x)-f_{E,u}|^q u(x)\,dx \\
& \qquad \qquad \lesssim \int_E |f(x)-f_E|^q u(x)\,dx + \int_E |f_E-f_{E,u}|
  u(x)\,dx \\
& \qquad \qquad  = \int_E |f(x)-f_E|^q u(x)\,dx
+ u(E) \left| \frac{1}{u(E)}\int_E (f(x) - f_E) u(x)\,dx \right|^q \\
& \qquad \qquad \leq 2 \int_E |f(x)-f_E|^q u(x)\,dx.
\end{align*}
To get \eqref{eqn:poincare2} with $E$ such that $|E|=\infty$, replace
$E$ by $E'= E\cap B_R(0)$.  Then $E'$ is convex and
$v(E'),\,|E'|<\infty$.  The desired inequality follows from Fatou's
lemma if we let $R\rightarrow \infty$.

Next, recall that for convex sets $E$, we have the following well-known
inequality (see~\cite{MR1814364}):  for scalar functions $f\in C^1(E)$ and $x\in E$,
\[ |f(x)-f_E| \lesssim \int_E \frac{|\nabla f(y)|}{|x-y|^{d-1}}\,dy
  = I_1(\chi_E |\nabla f|)(x). \]
Therefore, it will be enough
to prove that given any vector-valued function $g$,
\begin{equation} \label{eqn:mixed}
 \|u^{\frac{1}{q}} I_1(|V^{-\frac{1}{p}}g|)\|_{L^q} \lesssim
    \|g\|_{L^p}.
  \end{equation}
For in this case, if we let let $g=\chi_E V^{\frac{1}{p}}\nabla f$, then
combining the above inequalities we get inequality~\eqref{eqn:poincare2}.

\medskip

To prove \eqref{eqn:mixed} we argue as in the proof of
Theorems~\ref{thm:mainint} and~\ref{thm:sparse}, so here we only sketch the main ideas.
Define the matrix $U$ to be the diagonal
matrix $u(x)I_d$, where $I_d$ is the $d\times d$ identity matrix.  Let
$\Uu_Q^\Psi$ and $\Vv_Q^\Phi$ be the reducing operators associated to
$U$ and $V$ as in Section~\ref{section:proof-mainint}.   Fix a vector
function $g$ and a scalar function $h \in L^{q'}$; without loss of generality we may
assume $g$ and $h$ are bounded functions of compact support.  By the scalar
theory of domination by sparse operators for the fractional integral
(see~\cite{CruzUribe:2016ji}), we have that
\begin{equation*}
\left|\ip{u ^\frac{1}{q} I_\alpha(| V^{-\frac1p} {g}|)}{{h}}_{L^2} \right|
\lesssim \sum_{t \in \{0, \pm\frac13\}^d} \sum_{Q \in \Ss^t} {|Q|
  ^{\frac{\alpha}{d}}} \avgint_Q \int_Q | V(y)^{-\frac{1}{p}}
  {g}(y)|
|u (x) ^\frac{1}{q} {h}(x)| \, dx \, dy,
\end{equation*}
where each $\Ss^t$ is a sparse set contained in the dyadic grid $\D^t$
which is defined as in
Proposition~\ref{dyadic}.
Therefore, we need to fix a sparse set
$\Ss$ and show that the inner sum is bounded by
$\|g\|_{L^p} \|h\|_{L^{q'}}$.

Let $\{e_j\}$ be any orthonormal basis
of $\C^n$.  Then
\begin{align*}
& \sum_{Q \in \D^t} {|Q|
  ^{\frac{\alpha}{d}}} \avgint_Q \int_Q | V(y)^{-\frac{1}{p}}
  {g}(y)|
|u (x) ^\frac{1}{q} {h}(x)| \, dx \, dy \\
& \qquad \quad \leq
\sum_{Q \in \Ss} {|Q|
  ^{\frac{\alpha}{d}}} \avgint_Q \int_Q | (\Vv_Q^\Phi)^{-1} V(y)^{-\frac{1}{p}}
  {g}(y)|
|\Vv_Q^\Phi|_\op |u (x) ^\frac{1}{q} {h}(x)| \, dx \, dy \\
& \qquad \quad \lesssim
\sum_{Q \in \Ss} \sum_{j=1}^n {|Q|
  ^{1+\frac{\alpha}{d}}} \avgint_Q | (\Vv_Q^\Phi)^{-1} V(y)^{-\frac{1}{p}}
  {g}(y)|\,dy
\avgint_Q  |\Vv_Q^\Phi U (x) ^\frac{1}{q} {h}(x)e_j| \, dx \\
& \qquad \quad  \leq \sup_{{Q} }
  |{Q}|^{\frac{\alpha}{d}+\frac{1}{q}-\frac{1}{p}}
|\MC{V}_{{Q}} ^\Phi \MC{U}_{{Q}} ^\Psi|_{\op} \\
& \qquad \quad \quad \times
\sum_{j=1}^n \sum_{Q \in \Ss} |E_Q|  \,|Q| ^ \frac{\beta}{d} \dashint_Q
  |(\MC{V}_Q ^\Phi) ^{-1} V^{-\frac{1}{p}}(y) f(y)|\,dy \,
  \dashint_Q |(\MC{U}_Q ^\Psi)^{-1} U^\frac{1}{q}(x) h(x)e_j| \,dx.
\end{align*}
The middle inequality holds since $u$ and $h$ are
scalars and $|\Vv_Q^\Phi|_\op \approx \sum |\Vv_Q^\Phi e_j|$.

The proof now continues exactly as before. To estimate the supremum in
the last inequality, note that by~\eqref{eqn:reducing-norm2} it is
equivalent to~\eqref{eqn:poincare1} which is finite by assumption.

\bigskip

Finally, we use Theorem~\ref{thm:poincare} to prove the existence of a
weak solution of a degenerate $p$-Laplacian equation.  In a recent
paper~\cite{CruzUribe:2017wz} it was shown that the existence of a
weak solution was equivalent to the existence of a $(p,p)$ Poincar\'e
inequality.   For brevity, we refer the reader
to~\cite{CruzUribe:2017wz} for precise definitions of a weak solution,
which is technical in the degenerate case.

\begin{corollary} \label{cor:p-laplacian}
Fix $1<p<\infty$ and a bounded,  convex, open set $E \subset \R^d$.
Let $u$ be a scalar weight and $A$  a matrix weight such that $|A|_\op^{\frac{p}{2}}
\in L^1_\loc(E)$.  Suppose that there exist Young functions $\Phi$ and
$\Psi$, $\bar{\Phi}\in B_p$ and $\bar{\Psi}\in B_{p'}$, such that
\begin{equation} \label{eqn:p-laplacian1}
 \sup_Q |Q|^{\frac{1}{d}}\|u^{\frac{1}{p}}\|_{\Psi,Q}
  \|A^{-\frac{1}{2}}\|_{\Phi,Q} < \infty.
\end{equation}
Then for every $f\in L^p(u; E)$ there exists a weak solution $g$ to the
degenerate $p$-Laplacian Neumann problem
\begin{equation} \label{eqn:p-laplacian2}
\begin{cases}
\dv\Big(\Big|\sqrt{A(x)}\nabla g(x)\Big|^{p-2} A(x)\nabla g(x)\Big)
& = |f(x)|^{p-2}f(x)u(x) \text{ in }E\\
{\bf n}^t \cdot A(x) \nabla u &= 0\text{ on }\partial E,
\end{cases}
\end{equation}
where ${\bf n}$ is the outward unit normal vector of
$\partial E$.
\end{corollary}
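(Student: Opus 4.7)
The plan is to reduce the existence of a weak solution to the Poincar\'e inequality provided by Theorem~\ref{thm:poincare}, and then invoke the main result of~\cite{CruzUribe:2017wz}, which asserts that a $(p,p)$ Poincar\'e inequality of the appropriate form on the convex set $E$ is equivalent (under the integrability assumption on $A$) to the existence of a weak solution to the degenerate Neumann problem~\eqref{eqn:p-laplacian2} for every right-hand side $f\in L^p(u;E)$.

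First I would set $V(x)=A(x)^{p/2}$, so that $V$ is a self-adjoint positive definite matrix function with $V^{1/p}=A^{1/2}$ and $|V|_{\op}=|A|_{\op}^{p/2}$. The assumption $|A|_{\op}^{p/2}\in L^1_{\loc}(E)$ then ensures that $V$ is a matrix weight on $E$ in the sense of Section~\ref{section:prelim}. With this choice we have $V^{-1/p}=A^{-1/2}$, so the hypothesis~\eqref{eqn:p-laplacian1} is literally the hypothesis~\eqref{eqn:poincare1} of Theorem~\ref{thm:poincare} with $q=p$. Since $q=p$ we have $\frac{1}{p}-\frac{1}{q}=0\leq \frac{1}{d}$, and the Orlicz conditions $\bar{\Phi}\in B_p=B_{p,p}$ and $\bar{\Psi}\in B_{p'}=B_{q'}$ match exactly.

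Next I would apply Theorem~\ref{thm:poincare} on the bounded convex open set $E$ (so that $u(E)<\infty$ is automatic once one checks the local integrability of $u$, which follows from \eqref{eqn:p-laplacian1} applied to a fixed cube). This yields, for every scalar $f\in C^1(E)$,
\begin{equation*}
\left(\int_E |f(x)-f_{E,u}|^p u(x)\,dx\right)^{1/p}
\lesssim \left(\int_E |\sqrt{A(x)}\,\nabla f(x)|^p\,dx\right)^{1/p},
\end{equation*}
which is precisely the $(p,p)$ matrix--weighted Poincar\'e inequality needed to drive the variational argument of~\cite{CruzUribe:2017wz}. An approximation argument extends the estimate from $C^1(E)$ to the natural degenerate Sobolev space associated with $A$ on $E$.

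The main obstacle is purely bookkeeping rather than analytic: making sure that the degenerate Sobolev space and notion of weak solution used in~\cite{CruzUribe:2017wz} are compatible with our normalizations, i.e.\ that their matrix weight is our $A$ and that the Poincar\'e inequality they require is the one we just produced. Once this is matched up, the existence theorem of~\cite{CruzUribe:2017wz} applied to the functional
\begin{equation*}
J(g)=\frac{1}{p}\int_E |\sqrt{A(x)}\,\nabla g(x)|^p\,dx - \int_E |f(x)|^{p-2}f(x)\,g(x)\,u(x)\,dx
\end{equation*}
yields a minimizer $g$ whose Euler--Lagrange equation is exactly~\eqref{eqn:p-laplacian2} in the weak sense, with the natural Neumann condition ${\bf n}^t\cdot A\nabla g=0$ on $\partial E$ encoded by the absence of boundary tests. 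This completes the proof.
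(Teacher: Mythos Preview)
Your proposal is correct and follows essentially the same approach as the paper: define $V$ by $V^{1/p}=A^{1/2}$, observe that \eqref{eqn:p-laplacian1} becomes \eqref{eqn:poincare1} with $q=p$, apply Theorem~\ref{thm:poincare} to obtain the $(p,p)$ Poincar\'e inequality on $E$, and then invoke the main result of~\cite{CruzUribe:2017wz}. Your additional remarks (checking $u\in L^1_{\loc}$ from the bump condition, the density step, and the explicit variational functional) are reasonable elaborations but not required beyond what the paper does.
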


\begin{remark}
In the statement of Corollary~\ref{cor:p-laplacian} there seems to be
an implicit assumption on the regularity of $\partial E$ so that
${\bf n}$ exists.  This is not the case, but we refer the reader
to~\cite{CruzUribe:2017wz} for details.
\end{remark}

\begin{proof}
Define the matrix weight $V$ by $A^{\frac{1}{2}}=V^{\frac{1}{p}}$.
Then \eqref{eqn:p-laplacian1} is equivalent to~\eqref{eqn:poincare1}.
Therefore, by Theorem~\ref{thm:poincare} we have the
Poincar\'e inequality
\[ \int_E |f(x)-f_{E,u}|^pu(x)\,dx
\lesssim \int_E |A^{\frac{1}{2}}(x) \nabla f(x)|^p\,dx. \]
But by the main result in~\cite{CruzUribe:2017wz}, this is equivalent
to the existence of a weak
solution to \eqref{eqn:p-laplacian2}.
\end{proof}


\bibliographystyle{plain}

\bibliography{Matrixbumps}

\end{document}